\documentclass[11pt,reqno]{amsart}

\usepackage[T1]{fontenc}
\usepackage{lmodern}
\usepackage{microtype}
\usepackage[margin=1in]{geometry}
\usepackage{amsmath,amssymb,amsthm,mathtools}
\usepackage{booktabs}
\usepackage{enumitem}
\usepackage[dvipsnames]{xcolor}
\usepackage{pgfplots}
\usepackage[colorlinks=true,linkcolor=MidnightBlue,citecolor=BrickRed,
            urlcolor=RoyalBlue]{hyperref}
\usepgfplotslibrary{groupplots}
\usetikzlibrary{decorations.pathreplacing}
\pgfplotsset{compat=1.18}
\pgfplotsset{
  support diagonal/.style={black!45,thin,densely dotted,mark=none},
  support obstacle/.style={black,semithick,
    dash pattern=on 7pt off 2pt on 0.8pt off 2pt,mark=none},
  constrained lower/.style={BrickRed!85!black,very thick,solid,
    mark=o,mark repeat=120,mark size=1.15pt,
    mark options={solid,draw=black,fill=white,line width=0.45pt}},
  constrained upper/.style={MidnightBlue!90!black,very thick,
    dash pattern=on 5pt off 2pt,mark=square,mark repeat=120,
    mark phase=60,mark size=1.05pt,
    mark options={solid,draw=black,fill=white,line width=0.45pt}},
  ordinary lower/.style={ForestGreen!80!black,semithick,densely dotted,
    mark=triangle,mark repeat=20,mark size=1.15pt,
    mark options={solid,draw=black!65,fill=white}},
  ordinary upper/.style={ForestGreen!80!black,semithick,
    dash pattern=on 2.5pt off 1.4pt on 0.7pt off 1.4pt,
    mark=diamond,mark repeat=20,mark phase=10,mark size=1.05pt,
    mark options={solid,draw=black!65,fill=white}},
  support seam/.style={black!32,thin,
    dash pattern=on 1pt off 1.7pt,mark=none}
}

\numberwithin{equation}{section}
\allowdisplaybreaks

\newtheorem{theorem}{Theorem}[section]
\newtheorem{proposition}{Proposition}[section]
\newtheorem{lemma}{Lemma}[section]

\theoremstyle{definition}

\newtheorem{example}{Example}[section]
\theoremstyle{remark}
\newtheorem{remark}{Remark}[section]
\usepackage[capitalise,noabbrev]{cleveref}

\newcommand{\R}{\mathbb{R}}
\newcommand{\Pp}{\mathcal{P}}
\newcommand{\M}{\mathcal{M}}
\newcommand{\1}{\mathbf{1}}
\newcommand{\dd}{\,\mathrm{d}}
\newcommand{\Unif}{\operatorname{Unif}}
\newcommand{\supp}{\operatorname{supp}}
\newcommand{\lc}{\mathrm{lc}}

\newcommand{\KOneFiveDeltaB}{0.538575180}
\newcommand{\KOneFiveDeltaZero}{0.791971913}
\newcommand{\KOneFiveDeltaStar}{0.886555652}
\newcommand{\KOneFiveXZero}{-0.895985956}
\newcommand{\KTwoFiveEtaB}{1.261120319}

\newcommand{\CubicReferenceKTwoFive}{-1.446789916088}

\newcommand{\CubicReferenceKOneFive}{-0.673258643140}

\newcommand{\TraceLowerX}{0.0000000000}
\newcommand{\TraceLowerAlpha}{-0.8324843799}
\newcommand{\TraceLowerY}{-1.5000000000}
\newcommand{\TraceUpperX}{0.8500000000}
\newcommand{\TraceUpperAlpha}{-0.9252533586}
\newcommand{\TraceUpperY}{-0.6500000000}
\newcommand{\ComparisonTau}{0.7697191224}
\newcommand{\ComparisonEnd}{0.1553302625}

\newcommand{\ComparisonRows}{%
1.25 & -0.3710 & 0.3568 \\
1.50 & -0.6733 & 0.2596 \\
1.75 & -0.9348 & 0.1927 \\
2.00 & -1.1614 & 0.1426 \\
2.25 & -1.3379 & 0.1015 \\
2.50 & -1.4468 & 0.0653 \\
2.75 & -1.4928 & 0.0318 \\
3.00 & -1.5000 & 0.0000 \\
}

\title[A one-sided constrained martingale transport]
      {A one-sided constrained martingale transport between two uniform laws}
\author{Erhan Bayraktar}
\address[Erhan Bayraktar]{University of Michigan}
\email{erhan@umich.edu}
\author{Xin Zhang}
\address[Xin Zhang]{New York University}
\email{xz1662@nyu.edu}
\thanks{E.~Bayraktar is supported in part by the National Science
Foundation under grant DMS-2602036. X. Zhang is supported in part by the National Science
Foundation under grant DMS-2508556.}
\date{\today}

\subjclass[2020]{Primary 60G42, 49Q22; Secondary 91G20}
\keywords{martingale optimal transport, domain constraint, left-curtain
coupling, support constraint, Lambert W function, robust pricing}

\begin{document}

\begin{abstract}
We minimize $\mathbb E[h(Y-X)]$ over martingale couplings of
$\Unif[-1,1]$ and $\Unif[-2,2]$ satisfying $Y\geq X-k$, where
$h\in C^1([-3,3])$ has convex derivative. Feasibility holds exactly for
$k\geq1$. For each such $k$, we construct a coupling that minimizes
all costs in this class. For $1<k<3$, its support consists of two graphs,
with $D(x)=x-k$ on $[k-2,1]$. The maps admit an explicit
parametrization for $2\leq k<3$ and are determined by scalar equations
with unique admissible roots for $1<k<2$. We prove optimality by a
dual inequality, using an analytic estimate in the latter range.
At $k=1$ the optimizer is $Y=X\pm1$ with equal probabilities;
for $k\geq3$ it is the ordinary left-curtain coupling.
In the unconstrained problem, left-monotonicity identifies the
left-curtain coupling, which is optimal for this cost class. Under the
constraint, a discrete example shows that the corresponding support
condition, even together with every two-source comparison, does not
imply optimality.
\end{abstract}

\maketitle
\enlargethispage{6pt}

\section{Introduction}\label{sec:introduction}

We consider the martingale transport problem
\[
 \inf\mathbb E[h(Y-X)],\qquad
 X\sim\Unif[-1,1],\quad Y\sim\Unif[-2,2],\quad
 \mathbb E[Y\mid X]=X,\quad Y\geq X-k,
\]
for $h\in C^1([-3,3])$ with convex derivative. Our main result constructs,
for every feasible $k$, a single coupling that minimizes all such costs.

The optimizer is described by maps $D(x)\leq x\leq U(x)$ and a
conditional law supported on $\{D(x),U(x)\}$. For $1<k<3$, set
$b=k-2$. The lower map is decreasing on $[-1,b]$ and satisfies
$D(x)=x-k$ on $[b,1]$; the upper map is increasing on $[-1,1]$.
We first determine the transport from $[b,1]$. Subtracting its two
marginals leaves finite measures that are coupled by a left-curtain
construction on $[-1,b]$. For $2\leq k<3$, the maps are explicitly
parametrized. For $1<k<2$, scalar equations determine the maps and the
source value at which $U(x)=1-k$. At $k=1$, the coupling
$Y=X\pm1$ with equal probabilities is optimal. For $k\geq3$, the
constraint permits the ordinary left-curtain coupling.

\Cref{thm:main} gives the common optimizer $\pi_k$.
\Cref{prop:feasibility} establishes the sharp feasibility threshold;
\cref{prop:unit-optimality,prop:ordinary} settle the endpoint regimes.
For $1<k<3$, the construction starts from the sources in $[b,1]$,
where the lower branch follows $y=x-k$. The upper branch solves a
terminal-value problem expressed through the principal Lambert $W$
function. After subtracting this transport, the residual source and
target measures are in convex order. Their left-curtain coupling is
explicit in one parameter for $2\leq k<3$
(\cref{prop:feasible-plan}) and is determined by scalar equations with
unique admissible roots for $1<k<2$ (\cref{prop:small-k}).

For smooth costs, we prove optimality by constructing bounded functions
$\phi,\psi,\theta$ such that
\[
 h(y-x)\geq\phi(x)+\psi(y)+\theta(x)(y-x),
 \qquad (x,y)\in[-1,1]\times[-2,2],\quad y\geq x-k,
\]
with equality at $y=D(x),U(x)$. Integration against any admissible
coupling eliminates the martingale term $\theta(x)(y-x)$ and gives a
lower bound attained by $\pi_k$.
The dual functions are built from probability measures $Q_x$ on
displacements. On $[b,1]$, these measures solve a Volterra equation;
the lower contact there requires a one-sided derivative inequality.
For $2\leq k<3$, a comparison in the target variable proves the dual
inequality (\cref{thm:optimality}). For $1<k<2$, the boundary point
$x-k$ may also lie on the upper graph. We combine a comparison in the
source variable with a sign estimate in the target variable
(\cref{thm:small-optimality}). The key estimate,
\cref{lem:small-dual-estimates}, follows from rational logarithm bounds
and explicit polynomial inequalities. Uniform approximation then
extends optimality to the full $C^1$ cost class.

\Cref{prop:Gamma-left} shows that $\pi_k$ is
$\Gamma_k$-left-monotone. However, \cref{ex:counter} shows that this
support condition, even together with every two-source comparison,
does not imply optimality. In the unconstrained problem,
left-monotonicity identifies the unique left-monotone coupling, which
minimizes every cost in the present class
\cite{BeiglbockJuillet,BeiglbockHenryLabordereTouzi,HenryLabordereTouzi}.
Uniqueness of the optimizer itself requires an additional strictness
condition; it is not asserted here. \Cref{rem:other-marginals}
discusses which parts of the construction extend beyond uniform
marginals.

The problem also has a robust pricing interpretation. If $X,Y$ are
centered and normalized discounted asset prices at two maturities,
call prices at all strikes determine their marginal laws
\cite{BreedenLitzenberger1978}, while the martingale condition links
the two maturities. The payoff $h(Y-X)$ depends on the price change.
The cost class includes the cubic payoff $(Y-X)^3$, the squared
positive-part payoffs $(Y-X-\tau)_+^2$, and exponential payoffs
$e^{\lambda(Y-X)}$ with $\lambda>0$. It excludes the forward-start
straddle $|Y-X|$. The constraint $Y-X\geq-k$ can be interpreted as a
prediction set expressing a bound on the downward price change
\cite{HouObloj2018,BartlKupperNeufeld2020}. The value is then the
smallest model price consistent with the marginal laws, the martingale
condition, and that bound. The dual inequality provides the
corresponding subhedge. Conditioning an unconstrained model on
$\{Y-X\geq-k\}$ generally preserves neither the marginals nor the
martingale property, so the constrained coupling must be constructed
directly.

Strassen's theorem characterizes martingale feasibility by convex order
\cite{Strassen}. For an atomless source law on the line, the left-curtain
coupling is supported on at most two graphs \cite{BeiglbockJuillet} and
is optimal for martingale Spence--Mirrlees costs
\cite{BeiglbockHenryLabordereTouzi,HenryLabordereTouzi}.
Constructions by potential functions and extensions to shadow couplings
appear in \cite{HobsonNorgilas,BeiglbockJuilletShadow2021}.
Related potential methods construct supermartingale couplings for
marginals in convex-decreasing order
\cite{BayraktarDengNorgilas2023,BayraktarDengNorgilas2024}.
Quasi-sure duality and dual attainment are established in
\cite{BeiglbockNutzTouzi2017}. Connections with robust hedging and stochastic control are
studied in \cite{BeiglbockHenryLaborderePenkner,
GalichonHenryLabordereTouzi2014}. Sharp bounds for forward-start straddles
are obtained in \cite{HobsonNeuberger2012,HobsonKlimmek2015}.

For domain constraints, \cite{BayraktarZhangZhou} proves existence,
duality, and monotonicity results. Related formulations with linear
constraints and additional market information appear in
\cite{Zaev,AcciaioCoxHuesmann}. These results provide feasibility and
necessary optimality criteria; here we determine the coupling explicitly
and verify the dual inequality on the entire admissible domain.
The formulas also give examples for testing discrete approximations,
such as those developed for ordinary martingale transport in
\cite{GuoObloj2019}.

\Cref{sec:problem} states the main result and proves the endpoint cases.
\Cref{sec:construction,sec:optimality-limitations} give the construction
and optimality proof. \Cref{sec:numerics,sec:discussion} contain numerical
examples and a summary of the parameter regimes.

\section{The problem and main result}\label{sec:problem}

Let
\begin{equation*}
  \mu(\dd x)=\frac12\1_{[-1,1]}(x)\dd x,
  \qquad
  \nu(\dd y)=\frac14\1_{[-2,2]}(y)\dd y.
\end{equation*}
Here $\Pp(S)$ denotes the Borel probability measures on a Polish space
$S$, and $\pi_i$ is the $i$th marginal of $\pi\in\Pp(\R^2)$, $i=1,2$.
For a coupling with first marginal $\mu$, we write
$\pi(\dd x,\dd y)=\mu(\dd x)\pi_x(\dd y)$ for a Borel disintegration;
the conditional laws $\pi_x$ are defined up to a $\mu$-null set. We write
$\mu\preceq_{\mathrm{cx}}\nu$ when
$\int f\dd\mu\leq\int f\dd\nu$ for every convex $f$ for which the
integrals are finite. The coupling at $k=1$ constructed below shows
that $\mu\preceq_{\mathrm{cx}}\nu$.
For $k>0$, define
\begin{equation*}
  \Gamma_k:=\{(x,y)\in\R^2:y\geq x-k\}
\end{equation*}
and
\begin{equation*}
 \M_k(\mu,\nu):=
 \left\{\pi\in\Pp(\R^2):
 \begin{array}{l}
   \pi_1=\mu,\quad \pi_2=\nu,\quad \pi(\Gamma_k)=1,\\[2pt]
   \displaystyle\int y\,\pi_x(\dd y)=x
       \quad\text{for }\mu\text{-almost every }x
 \end{array}\right\}.
\end{equation*}
All admissible displacements belong to $[-3,3]$.  For
$h\in C^1([-3,3])$, define
\begin{equation}\label{eq:primal}
 V_k(h):=\inf_{\pi\in\M_k(\mu,\nu)}
           \int_{\R^2}h(y-x)\,\pi(\dd x,\dd y).
\end{equation}
When $\M_k(\mu,\nu)=\varnothing$ we use the convention
$V_k(h)=+\infty$. Every continuous cost on $[-3,3]$ is bounded,
so the objective is finite for every admissible coupling. For
$h\in C^3([-3,3])$, convexity of $h'$ is equivalent to
$c_{xyy}(x,y)=-h^{(3)}(y-x)\leq0$ for $c(x,y)=h(y-x)$, the
martingale Spence--Mirrlees sign for minimization.

\begin{theorem}[A cost-independent optimal coupling]\label{thm:main}
The set $\M_k(\mu,\nu)$ is nonempty if and only if $k\geq1$.
For every $k\geq1$, there is a two-graph coupling $\pi_k\in\M_k(\mu,\nu)$
such that
\[
 V_k(h)=\int h(y-x)\,\pi_k(\dd x,\dd y)
 \quad\text{for every }h\in C^1([-3,3])\text{ with convex derivative}.
\]
At $k=1$, it is given by $Y=X\pm1$ with equal conditional probabilities,
and $V_1(h)=[h(-1)+h(1)]/2$, as proved in \cref{prop:unit-optimality}.
For $k>1$, its construction has three regimes:
\begin{enumerate}[label=\textup{(\roman*)},leftmargin=2.2em]
\item \label{item:smallk}
      For $1<k<2$, the maps are determined by the scalar equations
      in \cref{sec:small-k}, each with a unique admissible solution.
\item \label{item:middle}
      For $2\leq k<3$, the maps have the closed parametrization in
      \cref{sec:explicit} and agree with the ordinary left-curtain
      maps on $[-1,2k-5]$.
\item \label{item:k3}
      For $k\geq3$, $\pi_k$ is the ordinary left-curtain coupling in
      \cref{prop:ordinary}.
\end{enumerate}
For $1<k<3$, the lower map decreases on $[-1,b]$ and follows
$D(x)=x-k$ on $[b,1]$, where $b=k-2$. The upper map increases throughout.
\end{theorem}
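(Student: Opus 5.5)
The theorem is an assembly of results proved later in the paper, so the proof is organized by regime. The plan is to check feasibility, construct $\pi_k$ in each range of $k$, and then verify optimality by the dual inequality from the introduction.

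\emph{Feasibility.} Suppose $\pi\in\M_k(\mu,\nu)$ with $k<1$. For $x$ near $-1$, the martingale condition forces $\pi_x$ to charge $\{y\le x\}$ unless $\pi_x=\delta_x$. The constraint $y\ge x-k$ then confines the mass that $\nu$ places on $[-2,-1-k)$. Only sources with $x\le y+k<-1$ could reach those targets, and there are none. Hence $\nu([-2,-1-k))=(1-k)/4>0$ cannot be transported, a contradiction, so $k\ge1$ is necessary. Conversely, at $k=1$ the coupling $Y=X\pm1$ with probability $\tfrac12$ each has second marginal $\tfrac12(\Unif[-2,0]+\Unif[0,2])=\nu$. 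It satisfies $Y-X\ge-1$, and since $\Gamma_1\subset\Gamma_k$ for $k\ge1$, this proves sufficiency; this is \cref{prop:feasibility}. Optimality at $k=1$ follows because $\mathbb E[Y-X\mid X]=0$ and $|Y-X|\le 1$ would be required by the constraint together with the supports. To see this, one shows that any admissible $\pi$ has $Y-X\in\{-1\}\cup[\,\cdot\,]$ forced by an extremal counting argument of the same type. I would then use the supporting line of the convex function $h$ restricted to the chord from $-1$ to $1$. More precisely, I would take $\phi,\psi$ so that $h(d)\ge \tfrac{h(1)+h(-1)}2+\tfrac{h(1)-h(-1)}2 d+\psi$-corrections, which is \cref{prop:unit-optimality}. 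For $k\ge3$ the constraint is vacuous on $[-1,1]\times[-2,2]$, so the ordinary left-curtain coupling is admissible. It is optimal by the martingale Spence--Mirrlees theory \cite{BeiglbockHenryLabordereTouzi,HenryLabordereTouzi}, since $c_{xyy}\le0$; this is \cref{prop:ordinary}, after approximating $C^1$ costs by $C^3$ ones.

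\emph{Construction for $1<k<3$.} On $[b,1]$, $b=k-2$, I set $D(x)=x-k$ and determine $U$ together with the weight $p(x)$ on $D$ from two conditions. The first is the martingale identity $p D+(1-p)U=x$. The second is the requirement that the image measures fill $\nu$ from the top downward. Together these give a terminal-value ODE for $U$ at $x=1$, $U(1)=2$, which is solved via the Lambert $W$ function. Subtracting the pushforward from $\mu|_{[b,1]}$ and $\nu$ leaves residual measures. I would verify that these are in convex order by comparing potential functions, and then couple them by the left-curtain construction on $[-1,b]$. For $2\le k<3$ this yields \cref{prop:feasible-plan}, and the maps coincide with the unconstrained ones on $[-1,2k-5]$, where the constraint is inactive. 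For $1<k<2$ I would invoke \cref{prop:small-k}; uniqueness of the admissible roots comes from monotonicity of the scalar equations in their parameter. The monotonicity of $D$ and $U$ is read off from the parametrization.

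\emph{Optimality.} For $h\in C^3$ I construct $\phi,\psi,\theta$ satisfying the dual inequality on $\Gamma_k\cap([-1,1]\times[-2,2])$, with equality on the graphs of $D$ and $U$. Integrating against any $\pi\in\M_k$ then gives $V_k(h)\ge\int h\,\dd\pi_k$. This is \cref{thm:optimality} for $2\le k<3$ and \cref{thm:small-optimality}, with its estimate \cref{lem:small-dual-estimates}, for $1<k<2$. Since $h'$ convex can be approximated uniformly in $C^1$ by smooth functions with convex derivative, and both sides are continuous in $h$, the result extends to all $h$ in the class. The main obstacle is the dual inequality for $1<k<2$. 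There the boundary point $x-k$ may also lie on the upper graph, so the one-sided contact condition must be verified by the analytic sign estimate rather than by a comparison in $y$ alone.
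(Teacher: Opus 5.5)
Your overall organization matches the paper's: the theorem is assembled from \cref{prop:feasibility}, \cref{prop:unit-optimality}, \cref{prop:ordinary}, \cref{prop:feasible-plan}, \cref{prop:small-k}, \cref{thm:optimality} and \cref{thm:small-optimality}. The Bernstein approximation \eqref{eq:convex-approximation} then passes from $C^3$ costs with $h'''\geq0$ to $C^1$ costs with convex derivative. Your feasibility argument, the $k\geq3$ case, and your outline of the construction and dual proof for $1<k<3$ are sound.

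One small correction there. The maps agree with $D_{\lc},U_{\lc}$ on $[-1,2k-5]$, but not because the constraint is inactive; it is inactive on all of $[-1,b)$. The reason is that for $x\leq a$ both endpoints lie in $[1-k,\eta_b]$. There the residual density $\bar r$ equals $1/4$, so the residual mass and moment equations coincide with the unconstrained ones.

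The genuine gap is your sketch of optimality at $k=1$. Citing \cref{prop:unit-optimality} is legitimate, but the argument you offer for it would fail, for three reasons:
\begin{itemize}
\item $|Y-X|\leq1$ is not forced. The constraint gives only $Z:=Y-X\geq-1$, and the supports allow $Z$ up to $3$.
\item $h$ need not be convex; only $h'$ is. For example, $h(z)=z^3$ belongs to the class.
\item Even for convex $h$, an affine function agreeing with $h$ at $\pm1$ lies \emph{above} $h$ on $(-1,1)$. A chord or supporting-line minorant therefore points the wrong way.
\end{itemize}

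The missing idea is that the marginals fix two moments, $\mathbb E Z=0$ and $\mathbb E Z^2=\mathbb E Y^2-\mathbb E X^2=1$, so a \emph{quadratic} minorant suffices. Take the Hermite interpolant $P_h$ that matches $h$ at $-1$ and $1$ and matches $h'$ at $1$. Its remainder is $h(z)-P_h(z)=\frac{h'''(\xi_z)}{6}(z+1)(z-1)^2$. This is nonnegative on $[-1,3]$ precisely because of the one-sided bound $z\geq-1$ and $h'''\geq0$. Then $\mathbb E(Z-1)=-1$ and $\mathbb E(Z-1)^2=2$ give $\mathbb E P_h(Z)=[h(-1)+h(1)]/2$, which the coupling $Y=X\pm1$ attains. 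In dual terms, your ``$\psi$-corrections'' must be quadratic, through $(y-x)^2=y^2-x^2-2x(y-x)$. The extension to $C^1$ costs then follows by the same approximation.
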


The theorem asserts the existence of a common optimizer, not uniqueness
for each cost. For example, every admissible coupling has the same
quadratic displacement cost, since
$\mathbb E[(Y-X)^2]=\mathbb E[Y^2]-\mathbb E[X^2]=1$.
Part~\ref{item:smallk} is proved in \cref{thm:small-optimality} using
\cref{lem:small-dual-estimates}.

It suffices to prove optimality for $C^3$ costs with nonnegative third
derivative. Indeed, for $g=h'$ let $g_n$ be the Bernstein polynomial
on $[-3,3]$:
\[
 g_n(z)=\sum_{j=0}^n g\!\left(-3+\frac{6j}{n}\right)
     \binom{n}{j}t^j(1-t)^{n-j},\qquad t=\frac{z+3}{6}.
\]
For $n\geq2$, convexity of $g$ makes all second differences of the sampled
values nonnegative, so $g_n''\geq0$ on $[-3,3]$. Bernstein approximation
gives $g_n\to g$ uniformly. Hence the polynomials
\begin{equation}\label{eq:convex-approximation}
 h_n(z):=h(0)+\int_0^z g_n(s)\dd s
 \quad\text{satisfy}\quad
 h_n'''\geq0,\qquad
 \|h_n-h\|_\infty\leq3\|g_n-h'\|_\infty\longrightarrow0.
\end{equation}
All norms are on $[-3,3]$. Since
$|\int(h_n-h)(y-x)\dd\pi|\leq\|h_n-h\|_\infty$ for every probability
coupling $\pi$, optimality of the same $\pi_k$ passes to the limit.

\subsection{Feasibility and the endpoint cases}

We identify the feasible values of $k$ and determine optimal couplings
at $k=1$ and for $k\geq3$. This reduces the remaining construction and
optimality proofs to $1<k<3$.

\begin{proposition}[Sharp feasibility threshold]\label{prop:feasibility}
The set $\M_k(\mu,\nu)$ is nonempty exactly when $k\geq1$.
\end{proposition}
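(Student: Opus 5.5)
Two directions: an explicit coupling for $k\geq1$, and an obstruction for $k<1$.

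\emph{Sufficiency.} Since $\Gamma_k\supseteq\Gamma_1$ for $k\geq1$, it suffices to exhibit one element of $\M_1(\mu,\nu)$. I will take $X\sim\mu$ and, conditionally on $X$, set $Y=X+\varepsilon$, where $\varepsilon=\pm1$ with probability $1/2$ each and is independent of $X$.
\begin{itemize}[leftmargin=2em]
\item The martingale condition holds because $\mathbb E[\varepsilon]=0$.
\item The constraint holds because $Y-X\geq-1\geq-k$.
\item For the second marginal: the law of $X+1$ is $\frac12\Unif[0,2]$ and the law of $X-1$ is $\frac12\Unif[-2,0]$. Their mixture with weights $1/2$ has density $\frac14$ on $[-2,2]$, which is $\nu$.
\end{itemize}
This also gives $\mu\preceq_{\mathrm{cx}}\nu$, as announced before the statement.

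\emph{Necessity.} Suppose $\pi\in\M_k(\mu,\nu)$ with $k<1$. I will use the fact that any martingale coupling satisfies
\[
\mathbb E|Y-X|=2\,\mathbb E[(X-Y)_+],
\]
since the positive and negative parts of $Y-X$ have equal means. Under the constraint, $(X-Y)_+\leq k$. I will compare this with a quantity forced by the marginals.

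The cleanest candidate is the identity $\mathbb E[(Y-X)^2]=\mathbb E Y^2-\mathbb E X^2=4/3-1/3=1$, already noted after \cref{thm:main}. The plan is to show that the negative displacement $(X-Y)_+$, which is at most $k$, together with mean zero, forces $\mathbb E[(Y-X)^2]<1$ when $k<1$.

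Conditionally on $X=x$, the law of $Z=Y-x$ has mean zero, satisfies $Z\geq-k$, and satisfies $Z\leq 2-x$. A mean-zero law on $[-k,\,2-x]$ has
\[
\mathbb E[Z^2\mid X=x]\leq k(2-x),
\]
by the two-point extremal bound $\mathbb E Z^2\leq ab$ for mean-zero laws on $[-a,b]$. Integrating against $\mu$ gives $\mathbb E[(Y-X)^2]\leq k\cdot\mathbb E[2-X]=2k$. This yields a contradiction only for $k<1/2$, which is too weak.

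So the decisive step is the range $1/2\leq k<1$, and I need an obstruction that uses the target marginal more sharply. I would try a convex test function concentrated near the left end, namely the put payoff $f(y)=(a-y)_+$ with $a$ chosen near $-1$.

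\begin{itemize}[leftmargin=2em]
\item Since $Y\geq X-k\geq-1-k$, the mass of $\nu$ on $[-2,-1-k)$ cannot be reached at all. That mass is $(1-k)/4>0$, so feasibility fails immediately when $k<1$.
\end{itemize}

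This support argument is actually the whole obstruction: it requires no convex-order bound, only $\nu([-2,-1-k))>0$. I therefore expect no genuine obstacle. The proof reduces to the explicit $k=1$ coupling together with this support observation.
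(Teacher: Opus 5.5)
Your proof is correct and matches the paper's. The coupling $Y=X\pm1$ with equal weights works for every $k\geq1$, and for $k<1$ the bound $Y\geq X-k\geq-1-k>-2$ leaves the $\nu$-mass $(1-k)/4$ of $[-2,-1-k)$ unreachable. The second-moment and put-payoff detours can be deleted, and note that the law of $X+1$ is $\Unif[0,2]$, not $\frac12\Unif[0,2]$, since the factor $\frac12$ is the mixture weight.
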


\begin{proof}
If $k<1$, every admissible pair satisfies
\[
  Y\geq X-k\geq-1-k>-2.
\]
This is incompatible with the fact that $\nu$ assigns positive mass to
$[-2,-1-k)$. Conversely, at $k=1$ let $Z$ be independent of $X$ with
$\mathbb P(Z=-1)=\mathbb P(Z=1)=1/2$, and put $Y=X+Z$. Then
$\mathbb E[Y\mid X]=X$ and $Y\geq X-1$. The law of $Y$ is the mixture
of $\Unif[-2,0]$ and $\Unif[0,2]$, each with weight $1/2$, hence is
$\Unif[-2,2]$. The same coupling is feasible for every $k\geq1$.
\end{proof}

\paragraph{$\Gamma_k$-convex order.}
For $x\in[-1,1]$, let
$I_k(x):=[-2,2]\cap[x-k,\infty)$ be the permitted target interval.
For $f\in C([-2,2])$, define its \emph{$\Gamma_k$-convex envelope} by
\[
 f^{\Gamma_k}(x):=
 \inf\left\{\int f(y)Q(\dd y):
       Q\in\Pp(I_k(x)),\ \int yQ(\dd y)=x\right\}.
\]
This is the lower convex envelope of $f$ restricted to $I_k(x)$,
evaluated at $x$. The infimum is attained by a probability measure
supported on at most two points.

We say that $\mu$ precedes $\nu$ in \emph{$\Gamma_k$-convex order}, and
write $\mu\preceq_{\Gamma_k}\nu$, if
\[
 \int f^{\Gamma_k}(x)\mu(\dd x)\leq\int f(y)\nu(\dd y)
 \qquad\text{for every }f\in C([-2,2]).
\]
For the present compact constraint, the constrained Strassen theorem
\cite[Proposition~2.1]{BayraktarZhangZhou} gives
\[
 \M_k(\mu,\nu)\ne\varnothing
 \quad\Longleftrightarrow\quad \mu\preceq_{\Gamma_k}\nu.
\]
Ordinary convex order is insufficient: $\mu\preceq_{\mathrm{cx}}\nu$
holds even for $k<1$. The constrained order characterizes feasibility
but does not identify a cost minimizer. The measures attaining
$f^{\Gamma_k}(x)$ depend on $f$ and need not combine to have second
marginal $\nu$. The two-source optimality criterion and its limitations
are discussed in \cref{sec:gamma-monotonicity}.

\begin{proposition}[Optimality at $k=1$]\label{prop:unit-optimality}
Let $h\in C^1([-3,3])$ have convex derivative. The coupling
$Y=X\pm1$ with equal conditional probabilities minimizes
\eqref{eq:primal} at $k=1$, and
\[
 V_1(h)=\frac{h(-1)+h(1)}2.
\]
\end{proposition}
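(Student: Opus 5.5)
The proof is a dual argument based on a single quadratic minorant in the displacement variable $z=y-x$. Two facts hold for every $\pi\in\M_1(\mu,\nu)$. First, $\mathbb E[Y-X]=0$ by the martingale condition. Second, $\mathbb E[(Y-X)^2]=\mathbb E[Y^2]-\mathbb E[X^2]=4/3-1/3=1$, as noted after \cref{thm:main}. Also, for every admissible pair the constraint $Y\geq X-1$ and $Y\leq 2$, $X\geq-1$ give $Z:=Y-X\in[-1,3]$. It therefore suffices to find a quadratic $p(z)=\alpha+\beta z+\gamma z^2$ with $h\geq p$ on $[-1,3]$ and equality at $z=\pm1$. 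Then, for every admissible $\pi$,
\[
 \int h(y-x)\,\pi(\dd x,\dd y)\geq \alpha+\beta\,\mathbb E[Z]+\gamma\,\mathbb E[Z^2]=\alpha+\gamma=\frac{p(-1)+p(1)}2=\frac{h(-1)+h(1)}2 .
\]
The coupling $Y=X\pm1$ of \cref{prop:feasibility} attains this value, since $Z=\pm1$ with probability $1/2$ each. This gives both optimality and the formula for $V_1(h)$.

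To construct $p$, I take the Hermite interpolant of $h$ with nodes $-1,1,1$: $p(-1)=h(-1)$, $p(1)=h(1)$, $p'(1)=h'(1)$. By the reduction preceding \cref{prop:feasibility}, via \eqref{eq:convex-approximation}, it suffices to treat $h\in C^3([-3,3])$ with $h'''\geq0$. For such $h$, the remainder $r=h-p$ satisfies the standard Hermite error formula
\[
 r(z)=\frac{h'''(\xi_z)}{6}\,(z+1)(z-1)^2
\]
for some $\xi_z$ in the convex hull of $\{-1,1,z\}$. For $z\in[-1,3]$ the factor $(z+1)(z-1)^2$ is nonnegative and $h'''(\xi_z)\geq0$, so $h\geq p$ on $[-1,3]$, with equality at $\pm1$.

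Alternatively, one can avoid the smoothing. The remainder equals the third divided difference $h[-1,1,1,z]$ times $(z+1)(z-1)^2$, and this divided difference is nonnegative whenever $h'$ is convex. I would use the approximation route, since the paper has already set it up.

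The only delicate point is confirming that the admissible displacements lie in $[-1,3]$ rather than all of $[-3,3]$. Below $-1$ the cubic factor $(z+1)(z-1)^2$ changes sign, so the minorant would fail there. This is exactly where the constraint $k=1$ enters. The rest is routine bookkeeping of the first two moments of $Z$.
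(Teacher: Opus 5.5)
Your proposal is correct and follows the paper's proof essentially verbatim. It uses the same Hermite interpolant at nodes $-1,1,1$, the same nonnegative remainder $\frac{h'''(\xi_z)}{6}(z+1)(z-1)^2$ on the admissible range $[-1,3]$, the fixed moments $\mathbb E Z=0$, $\mathbb E Z^2=1$, attainment by $Y=X\pm1$, and the approximation \eqref{eq:convex-approximation}; your $\alpha+\gamma$ bookkeeping is only a rewriting of the paper's computation with $\mathbb E(Z-1)=-1$ and $\mathbb E(Z-1)^2=2$.
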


\begin{proof}
For any $\pi\in\M_1(\mu,\nu)$, put $Z=Y-X$. The constraint,
martingale property, and prescribed marginals give
\[
 -1\leq Z\leq3,\qquad
 \mathbb E^\pi Z=0,\qquad
 \mathbb E^\pi Z^2=\mathbb E^\pi Y^2-\mathbb E^\pi X^2=1.
\]
First suppose $h\in C^3$ with $h'''\geq0$. Let $P_h$ be the quadratic
polynomial that agrees with $h$ at $-1$ and $1$ and with $h'$ at $1$:
\[
 P_h(z)=h(1)+h'(1)(z-1)
       +\frac{h(-1)-h(1)+2h'(1)}4(z-1)^2.
\]
The Hermite interpolation remainder gives, for $z\notin\{-1,1\}$,
\[
 h(z)-P_h(z)=\frac{h'''(\xi_z)}6(z+1)(z-1)^2\geq0,
 \qquad -1\leq z\leq3,
\]
where $\xi_z$ lies between the smallest and largest of $-1,1,z$.
Equality holds at the two interpolation points. Since
$\mathbb E^\pi(Z-1)=-1$ and $\mathbb E^\pi(Z-1)^2=2$, it follows that
\[
 \mathbb E^\pi h(Z)\geq\mathbb E^\pi P_h(Z)
 =\frac{h(-1)+h(1)}2.
\]
The feasible coupling in \cref{prop:feasibility} attains this bound.
For general $h\in C^1$ with convex derivative, apply the bound to
$h_n$ from \eqref{eq:convex-approximation} and pass to the uniform limit.
\end{proof}

\begin{proposition}[The ordinary left-curtain regime]\label{prop:ordinary}
Define
\begin{equation}\label{eq:ordinary-maps}
 D_{\lc}(x)=-\frac{x}{2}-\frac32,
 \qquad
 U_{\lc}(x)=\frac{3x}{2}+\frac12,
 \qquad
 q_{\lc}(x)=\frac34.
\end{equation}
The kernel
\[
 K_{\lc}(x,\dd y)
 =(1-q_{\lc})\delta_{D_{\lc}(x)}(\dd y)
   +q_{\lc}\delta_{U_{\lc}(x)}(\dd y)
\]
defines the left-curtain coupling of $\mu$ and $\nu$.  It is feasible for
$\Gamma_k$ if and only if $k\geq3$ and, in that range, minimizes
\eqref{eq:primal} whenever $h'$ is convex.
\end{proposition}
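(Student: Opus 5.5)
We verify three things: that the kernel is a martingale coupling of $\mu$ and $\nu$, that it is the left-curtain coupling, and the feasibility and optimality claims.

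\emph{Martingale and marginals.} The conditional mean is
\[
 \tfrac14\bigl(-\tfrac x2-\tfrac32\bigr)+\tfrac34\bigl(\tfrac{3x}2+\tfrac12\bigr)=x,
\]
so the kernel is a martingale kernel. For $x\sim\Unif[-1,1]$, the lower image $D_{\lc}(x)$ is uniform on $[-2,-1]$. It carries total mass $1/4$, which matches $\nu([-2,-1])$. The upper image $U_{\lc}(x)$ is uniform on $[-1,2]$ with mass $3/4$, which matches $\nu([-1,2])$. Hence the second marginal is $\nu$.

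\emph{Left-curtain identification.} I would check the two-graph structure of \cite{BeiglbockJuillet}: $D_{\lc}$ is decreasing, $U_{\lc}$ is increasing, $D_{\lc}(x)<x<U_{\lc}(x)$, and the left-monotone no-crossing condition holds. For $x<x'$, the target $D_{\lc}(x')$ does not lie in $(D_{\lc}(x),U_{\lc}(x))$, because $D_{\lc}(x')<D_{\lc}(x)$ and $U_{\lc}(x')>U_{\lc}(x)$. Equivalently, for each $a\in[-1,1]$ the mass $\mu|_{[-1,a]}$ is sent to $\nu$ restricted to $[D_{\lc}(a),-1]\cup[-1,U_{\lc}(a)]$ with matching mean, which is the shadow characterization. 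Uniqueness of the left-monotone coupling for atomless $\mu$ then identifies $K_{\lc}$ as the left-curtain coupling.

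\emph{Feasibility.} The displacement $D_{\lc}(x)-x=-\tfrac32(x+1)$ is smallest at $x=1$, where it equals $-3$. So $Y\ge X-k$ holds $\pi$-a.s. iff $k\ge3$. For $k<3$, the set $\{x:-\tfrac32(x+1)<-k\}$ has positive $\mu$-measure and carries lower-branch mass $1/4$, so the constraint fails there. Since the left-curtain coupling is unique, no other version of it is feasible.

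\emph{Optimality.} For $k\ge3$ we have $\M_k(\mu,\nu)\subseteq\M(\mu,\nu)$, so $V_k(h)\ge V_\infty(h)$, the unconstrained infimum. By the approximation \eqref{eq:convex-approximation}, it suffices to take $h\in C^3$ with $h'''\ge0$. Then $c(x,y)=h(y-x)$ satisfies $c_{xyy}\le0$, the martingale Spence--Mirrlees condition for minimization. By \cite{BeiglbockHenryLabordereTouzi,HenryLabordereTouzi}, the left-curtain coupling minimizes the unconstrained problem. Since it is feasible for $\Gamma_k$, it attains $V_k(h)$, and passing to uniform limits covers general $h$ with convex derivative.

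The main obstacle is confirming that the cited theorem applies in exactly this form, i.e. with the correct sign convention and with compactly supported marginals. If needed, the fallback is an explicit dual certificate $h(y-x)\ge\phi(x)+\psi(y)+\theta(x)(y-x)$, with equality on both graphs, built from the standard left-curtain dual.
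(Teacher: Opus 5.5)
Your proposal is correct and follows essentially the same route as the paper. Both arguments use the same steps: the pushforward check of the two affine branches against $\nu|_{[-2,-1]}$ and $\nu|_{[-1,2]}$, left-monotonicity plus uniqueness of the left-monotone coupling, the displacement bound $x-D_{\lc}(x)=\tfrac32(x+1)\leq3$ with a positive-mass violation set for $k<3$, and optimality via the unconstrained martingale Spence--Mirrlees theorem together with $\M_k(\mu,\nu)\subseteq\M(\mu,\nu)$.

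One detail needs tightening: the cited Spence--Mirrlees results are stated for strict $c_{xyy}<0$. For $h'''\geq0$, apply them to $h(z)+\varepsilon z^3$ and let $\varepsilon\downarrow0$, as the paper does, rather than invoking them directly under the non-strict sign.
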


\begin{proof}
For $x>-1$ one has
$D_{\lc}(x)<x<U_{\lc}(x)$ and
$(1-q_{\lc})D_{\lc}+q_{\lc}U_{\lc}=x$.  The lower branch carries source
density $(1-q_{\lc})/2=1/8$ and has derivative $-1/2$, so its image has
density $1/4$ on $[-2,-1]$.  The upper branch carries density $3/8$ and
has derivative $3/2$, so its image has density $1/4$ on $[-1,2]$.  Thus
the second marginal is $\nu$. Since $D_{\lc}$ decreases and
$U_{\lc}$ increases, the support is left-monotone. The uniqueness of
the left-monotone coupling identifies it as the left-curtain coupling
\cite{BeiglbockJuillet}. Moreover,
\begin{equation*}
 x-D_{\lc}(x)=\frac32(x+1)\leq3.
\end{equation*}
Thus the coupling is feasible when $k\geq3$. If $k<3$, its lower
branch violates the constraint for $x>2k/3-1$. This source interval has
positive $\mu$-mass and the lower branch has weight $1/4$, proving
infeasibility in that range.
For smooth $h$ with $h'''>0$, the cost
$c(x,y)=h(y-x)$ satisfies $c_{xyy}=-h'''<0$, and the martingale
Spence--Mirrlees theorem selects the left-curtain coupling for the
minimization convention used here
\cite{BeiglbockHenryLabordereTouzi,HenryLabordereTouzi}.
For smooth $h$ with $h'''\geq0$, apply this result to
$h_\varepsilon(z)=h(z)+\varepsilon z^3$ and let
$\varepsilon\downarrow0$. Uniform convergence preserves the optimality
inequality. The approximation in \eqref{eq:convex-approximation} gives
the $C^1$ statement. Since the unconstrained minimizer is feasible for
$k\geq3$, it also minimizes the constrained problem.
\end{proof}

\section{Construction of the couplings}\label{sec:construction}

For $1<k<3$, set $b=k-2$. We construct continuous maps
$D,U:[-1,1]\to[-2,2]$ satisfying $D(x)\leq x\leq U(x)$ and
$D(x)=x-k$ on $[b,1]$. The martingale weights are
\[
 q(x)=\frac{x-D(x)}{U(x)-D(x)},\qquad p(x)=1-q(x),
 \qquad x>-1.
\]
After determining the transport from $[b,1]$, we subtract its marginals
from $\mu$ and $\nu$. We couple the resulting measures
$\bar\mu,\bar\nu$ by a martingale transport $\bar\pi$ satisfying
\[
 (\operatorname{pr}_2)_\#
 \bigl(\bar\pi|_{[-1,x]\times\mathbb R}\bigr)
 =\bar\nu|_{[D(x),U(x)]},\qquad -1\leq x\leq b,
\]
where $\operatorname{pr}_2(x,y)=y$, $D$ decreases, and $U$ increases
on $[-1,b]$.
Equality of mass and first moment determines these endpoints. The
formulas depend on whether $D(x)$ and $U(x)$ lie below or above $1-k$,
where the density of $\bar\nu$ changes.

\subsection{Transport from \texorpdfstring{$[b,1]$}{[b,1]} and residual measures}\label{sec:tail}

We determine the transport from $[b,1]$ and subtract its marginals from
$\mu,\nu$. We then derive the mass and first-moment equations that
determine $D,U$ on $[-1,b]$.

On $[b,1]$, impose
\begin{equation*}
  D(x)=x-k,\qquad U(x)>x.
\end{equation*}
The martingale condition then gives
\begin{equation*}
  q(x)=\frac{k}{U(x)-x+k},
  \qquad
  p(x):=1-q(x)=\frac{U(x)-x}{U(x)-x+k}.
\end{equation*}
Requiring the transport along $U$ to have target density $1/4$ gives
$U'(x)/4=q(x)/2$, hence
\begin{equation}\label{eq:tail-ode}
  U'(x)=\frac{2k}{U(x)-x+k},
  \qquad U(1)=2.
\end{equation}
Integration yields
\begin{equation}\label{eq:tail-implicit}
 U(x)+2k\log\bigl(k-U(x)+x\bigr)
 =2+2k\log(k-1).
\end{equation}

Let
$W_0:[-e^{-1},\infty)\to[-1,\infty)$ denote the principal real branch of
the Lambert function, characterized by $W_0(z)e^{W_0(z)}=z$.  Setting
$r(x):=k-U(x)+x$ in \eqref{eq:tail-implicit} gives
\[
 r(x)e^{-r(x)/(2k)}
 =(k-1)\exp\left(\frac{2-x-k}{2k}\right).
\]
Thus
\begin{equation}\label{eq:tail-lambert}
 U(x)=x+k+2kW_0\!\left(
 -\frac{k-1}{2k}\exp\left(\frac{2-x-k}{2k}\right)
 \right),\qquad b\leq x\leq1.
\end{equation}
For $\xi_k(x):=-\frac{k-1}{2k}\exp\left(\frac{2-x-k}{2k}\right)$ and $b\leq x\leq1$,
\[
 |\xi_k(x)|\leq\frac{k-1}{2k}\exp\left(\frac{2-k}{k}\right)
 \leq\frac14<e^{-1};
\]
the middle bound is maximized at $k=2$. Since
$1/4<(1/2)e^{-1/2}$ and $W_0$ is increasing, one has
$W_0(\xi_k(x))>-1/2$, so $U(x)-x>0$. The other real branch,
$W_{-1}:[-e^{-1},0)\to(-\infty,-1]$, would give $r=-2kW_{-1}(\xi_k(x))\geq2k$ and hence
$U-x=k-r<0$, contrary to $U\geq x$. In particular, $\eta_b:=U(b)$ satisfies
\begin{equation*}
 \eta_b=2k-2+2kW_0\!\left(
 -\frac{k-1}{2k}\exp\left(\frac{2-k}{k}\right)
 \right).
\end{equation*}

Set
\begin{equation}\label{eq:E-rho}
 E(\eta):=(k-1)\exp\left(\frac{2-\eta}{2k}\right),
 \qquad
 \rho(\eta):=E(\eta)+\eta-k.
\end{equation}
Equation \eqref{eq:tail-implicit} is equivalent to
\begin{equation}\label{eq:tail-param}
 x=\rho(\eta),\qquad U(x)=\eta,\qquad
 k-U(x)+x=E(\eta).
\end{equation}
The function $E$ decreases on $[b,2]$ and satisfies $E(b)<k$. To prove
the latter inequality, define
\[
 f(k):=\log\frac{k}{k-1}-\frac{4-k}{2k}.
\]
Then $f'(k)=(k-2)/(k^2(k-1))$, so its minimum on $(1,3)$ is
$f(2)=\log2-1/2>0$. Hence $\rho'(\eta)=1-E(\eta)/(2k)>0$ there. Moreover,
$\rho(b)<b$ and $\rho(2)=1>b$. Therefore $\eta_b$ is equivalently the unique solution in $(b,2)$ of
\begin{equation}\label{eq:eta-b}
  \rho(\eta_b)=b=k-2.
\end{equation}
For $\eta\in[\eta_b,2]$, the displacement and weights are
\begin{equation*}
 w(\eta):=U-x=k-E(\eta),\qquad
 q=\frac{k}{2k-E(\eta)},\qquad
 p=\frac{k-E(\eta)}{2k-E(\eta)}.
\end{equation*}
At $k=3$, set $\eta_b=2$. The interval $[b,1]$ reduces to a point,
and the construction below gives the ordinary left-curtain coupling.
Formulas at degenerate intervals are understood by continuity.

\begin{lemma}[The terminal-value problem]
Let $1<k<3$ and $b=k-2$.  The terminal-value problem
\eqref{eq:tail-ode} has a unique $C^1$ solution on $[b,1]$ satisfying
$U(x)>x$.  It is given by \eqref{eq:tail-lambert}; moreover,
\[
 0<U(x)-x<k,\qquad U'(x)>0,\qquad U(b)=\eta_b.
\]
\end{lemma}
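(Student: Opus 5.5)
Work throughout with $r(x):=k-U(x)+x$. The condition $U(x)>x$ reads $r<k$, and \eqref{eq:tail-ode} becomes $r'=1-U'=1-2k/(2k-r)$. This vector field is smooth on $\{r<2k\}$, which contains the admissible region $\{r<k\}$.

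\textbf{Uniqueness.} Let $U$ be any $C^1$ solution on $[b,1]$ with $U(x)>x$. Then $U-x+k>k>0$, so separation of variables is legitimate. Differentiating the left side of \eqref{eq:tail-implicit} along the solution gives
\[
U'-\frac{2k(U'-1)}{k-U+x}=0,
\]
and this identity is equivalent to \eqref{eq:tail-ode}. Hence every such solution satisfies \eqref{eq:tail-implicit}, with the constant fixed by $U(1)=2$. Exponentiating \eqref{eq:tail-implicit} gives the Lambert relation displayed before \eqref{eq:tail-lambert}, so $-r/(2k)$ is a real $W$-branch value at $\xi_k(x)$. Since $\xi_k(x)\in(-e^{-1},0)$, the only candidates are $W_0$ and $W_{-1}$.
\begin{itemize}
\item The bound $|\xi_k|\le 1/4<e^{-1}$ already shown in the text keeps us away from the branch point.
\item The $W_{-1}$ branch gives $r\ge 2k$, hence $U-x<0$, which is excluded.
\end{itemize}
Therefore $U$ must be given by \eqref{eq:tail-lambert}. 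Equivalently, one can note that the Lipschitz right side in the region $r<k$ gives uniqueness by Picard--Lindel\"of, but the branch argument is more direct.

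\textbf{Existence.} Take $U$ defined by \eqref{eq:tail-lambert}. Since $W_0$ is real-analytic on $(-e^{-1},\infty)$ and $\xi_k$ stays in a compact subset of that interval, $U$ is $C^1$ (indeed smooth). Because $W_0(\xi)e^{W_0(\xi)}=\xi$, the function $r=-2kW_0(\xi_k(x))$ satisfies the Lambert relation, and hence \eqref{eq:tail-implicit}. Differentiating back gives \eqref{eq:tail-ode} wherever $k-U+x\neq 2k$, and this holds automatically.

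At $x=1$ we have $\xi_k(1)=-\frac{k-1}{2k}e^{(1-k)/(2k)}$. Writing $w_1=-(k-1)/(2k)$ shows $w_1e^{w_1}=\xi_k(1)$. Since $w_1>-1/2>-1$, this is the $W_0$ value, so $U(1)=1+k-(k-1)=2$.

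\textbf{The inequalities.} The text has already shown $W_0(\xi_k(x))\in(-1/2,0)$, using $\xi_k<0$, monotonicity of $W_0$ and $W_0(0)=0$. Then $U-x=k+2kW_0$ lies in $(0,k)$. Finally $U'=2k/(U-x+k)$, and its denominator lies in $(k,2k)$, so $U'\in(1,2)$; in particular $U'>0$. The identity $U(b)=\eta_b$ is the formula for $\eta_b$ obtained by setting $x=b=k-2$ in \eqref{eq:tail-lambert}.

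The main step needing care is the branch selection: confirming that the terminal value picks $W_0$, which is the check at $x=1$ above, and that $W_{-1}$ is excluded everywhere. Both reduce to the bound $W_0(\xi_k)>-1/2$ already in the text.
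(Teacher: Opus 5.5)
Your proof is correct in outline but takes a different route from the paper's. The paper does not verify \eqref{eq:tail-lambert} directly. It uses the estimates preceding the lemma to show that $\rho(\eta)=E(\eta)+\eta-k$ is a strictly increasing $C^1$ bijection $[\eta_b,2]\to[b,1]$ and defines $U=\rho^{-1}$. It then reads off $U-x=k-E(U)\in(0,k)$ and $U'=1/\rho'(U)=2k/(U-x+k)$, and obtains \eqref{eq:tail-lambert} by inverting $\rho$. Uniqueness comes from the locally Lipschitz field $w'=(k-w)/(k+w)$ on $w>-k$; this is your Picard--Lindel\"of remark, written in $w=k-r$.

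You instead check the closed form directly: analyticity of $W_0$ on a compact range, the terminal value $W_0(\xi_k(1))=-(k-1)/(2k)$, and differentiation of \eqref{eq:tail-implicit}. Your main uniqueness argument is pointwise branch exclusion, which needs no ODE theorem and gives the sharper bound $1<U'<2$. The paper's route has the advantage of producing the parametrization \eqref{eq:tail-param}, $x=\rho(\eta)$, $U=\eta$, which is reused in the construction for $2\le k<3$ (for instance in $P(s)=\eta/2-\rho(\eta)$).

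One step in your branch argument is not justified as written. To conclude that an arbitrary solution satisfies \eqref{eq:tail-implicit}, you need $r=k-U+x>0$, because $r$ is the argument of the logarithm; equivalently, separating variables in $w=U-x$ divides by $k-w$. The hypothesis $U>x$ only gives $r<k$. Your remark $U-x+k>k>0$ concerns the denominator of \eqref{eq:tail-ode}, not the logarithm.

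The repair is short. We have $r(1)=k-1>0$. If $r$ vanished somewhere on $[b,1)$, let $x_1$ be its largest zero. Then $r>0$ on $(x_1,1]$, where $2k\log r=2+2k\log(k-1)-U$ has a bounded right-hand side, since $U$ is continuous on $[b,1]$. But the left side tends to $-\infty$ as $x\downarrow x_1$, a contradiction. Alternatively, $w\equiv k$ is an equilibrium of $w'=(k-w)/(k+w)$ and $w(1)=1<k$, so Lipschitz uniqueness keeps $w<k$. With $r>0$ established, your exclusion of the $W_{-1}$ branch via $W_{-1}\le-1$ completes the uniqueness proof.
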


\begin{proof}
The estimates above show that
$\rho:[\eta_b,2]\to[b,1]$ is a strictly increasing $C^1$ bijection.
Define $U=\rho^{-1}$.  Since $x=E(U)+U-k$, one has
$U-x=k-E(U)\in(0,k)$ and
\[
 U'(x)=\frac{1}{\rho'(U(x))}
       =\frac{2k}{2k-E(U(x))}
       =\frac{2k}{U(x)-x+k}.
\]
The endpoint values follow from $\rho(\eta_b)=b$ and $\rho(2)=1$,
and solving for the inverse gives
\eqref{eq:tail-lambert}.  Finally, for $w=U-x$ the equation becomes
$w'=(k-w)/(k+w)$.  Its vector field is locally Lipschitz for $w>-k$;
therefore any solution satisfying $U>x$ and $U(1)=2$ agrees with the
one just constructed on the whole interval.
\end{proof}

\medskip\noindent\emph{Residual measures.}

The intervals $D([b,1])=[-2,1-k]$ and $U([b,1])=[\eta_b,2]$ are
disjoint. Indeed, $\eta_b>1-k$ follows from $\eta_b>b$ if $1-k\leq b$;
otherwise $1-k\in(b,2)$ and
\[
 \rho(1-k)-b=(k-1)\left(\exp\!\left(\frac{k+1}{2k}\right)-3\right)<0,
\]
since $\exp((k+1)/(2k))<e<3$. Thus $\rho(1-k)<\rho(\eta_b)$, which gives the same inequality.

The transport from $[b,1]$ has target density $1/4$ on $[\eta_b,2]$
and $p(y+k)/2$ on $[-2,1-k]$. Its removal leaves
\begin{equation*}
 \bar\mu(\dd x)=\frac12\1_{[-1,b]}(x)\dd x
\end{equation*}
and $\bar\nu(\dd y)=\bar r(y)\dd y$, where
\begin{equation*}
 \bar r(y)=
 \begin{cases}
  \displaystyle\frac14-\frac12p(y+k),&-2\leq y\leq1-k,\\[6pt]
  \displaystyle\frac14,&1-k<y\leq\eta_b,\\[4pt]
 0,&\text{otherwise}.
 \end{cases}
\end{equation*}
Since $0<p<1/2$, one has $0<\bar r<1/4$ on $[-2,1-k]$ and
$\bar r=1/4$ on $(1-k,\eta_b]$. The finite measures $\bar\mu$ and
$\bar\nu$ both have mass $(b+1)/2$. They have equal first moments
because the removed transport is a martingale coupling of its marginals.

\begin{lemma}[Residual convex order]\label{lem:residual-convex-order}
For every $1<k<3$, the finite measures $\bar\mu$ and $\bar\nu$ have equal
mass and first moment and satisfy
\[
 \int f\dd\bar\mu\leq\int f\dd\bar\nu
\]
for every convex function $f$ on $[-2,2]$.
\end{lemma}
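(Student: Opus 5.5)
Equal mass and first moment were already noted above, so only the convex-order inequality needs proof. I would use the standard potential-function criterion. For finite measures of equal mass and barycenter on $[-2,2]$, it suffices to check
\[
 u_{\bar\mu}(t):=\int (t-x)_+\,\bar\mu(\dd x)\;\leq\; u_{\bar\nu}(t):=\int (t-y)_+\,\bar\nu(\dd y)
 \qquad\text{for all } t\in[-2,2].
\]
Every convex $f$ on $[-2,2]$ is a uniform limit of affine functions plus nonnegative combinations of call functions $(t-\cdot)_+$. The affine part integrates to the same value under both measures.

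The plan is a single-crossing argument on densities rather than a direct computation. The density of $\bar\mu$ is $1/2$ on $[-1,b]$. The density $\bar r$ of $\bar\nu$ is strictly less than $1/4$ on $[-2,1-k]$, equals $1/4$ on $(1-k,\eta_b]$, and vanishes elsewhere. Moreover $\bar\mu$ lives in $[-1,b]$, while $\bar\nu$ has support $[-2,\eta_b]$ with $-2<-1$ and $\eta_b>b$. Hence the signed density $\bar r-\tfrac12\1_{[-1,b]}$ has the following sign pattern.
\begin{enumerate}
\item It is positive on $[-2,-1)$.
\item It is negative on $(-1,b)$, except possibly where $\bar\nu$ also exceeds $1/2$; this cannot happen since $\bar r\le 1/4$.
\item It is nonnegative on $(b,\eta_b]$.
\end{enumerate}
So the difference $\bar\nu-\bar\mu$ is $+,-,+$ with respect to the ordered intervals $[-2,-1]$, $[-1,b]$, $[b,\eta_b]$.

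The classical Karlin--Novikoff cut criterion then applies: two measures with equal mass and equal mean, whose difference has sign pattern $+,-,+$, are in convex order. Concretely, set $G(t)=u_{\bar\nu}(t)-u_{\bar\mu}(t)$. We have $G(-2)=0$, and $G(2)=0$ because the masses and means agree. Also $G''=\bar\nu-\bar\mu$ in the distributional sense. Therefore $G$ is convex on $[-2,-1]$, concave on $[-1,b]$, and convex on $[b,2]$. Moreover $G'(-2)=0$ and $G'(2)=0$, with $G'$ equal to the difference of the distribution functions.

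From this, $G'$ increases from $0$ on the first interval, decreases on the second, and increases back to $0$ on the third. So $G'$ is first $\ge0$, then changes sign once from $+$ to $-$, and stays $\le0$ thereafter. Hence $G$ rises and then falls between two zero endpoints, which gives $G\ge0$ throughout.

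The main obstacle is making sure the sign pattern is exactly as claimed, with no extra sign change. In particular, the middle region must genuinely be negative: $\bar r\le 1/4<1/2$ there, which is immediate. The endpoints of the support also need care. We must check $-2<-1$ and $\eta_b\geq b$ (the latter from $\rho(\eta_b)=b$ and $\rho(\eta)<\eta$, since $E<k$). We also need that the breakpoint $1-k$ causes no trouble; it does not, because on $[-2,1-k]$ the density $\bar r$ is still strictly positive and at most $1/4$. The degenerate case $k\to3$ is understood by continuity, as stated.
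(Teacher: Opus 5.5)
Your proof is correct, and it rests on the same observation as the paper's: $\bar\nu-\bar\mu$ is nonpositive on $I=[-1,b]$, because $\bar r\le 1/4<1/2$ there, and nonnegative off $I$, because $\bar\mu$ vanishes there.

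The difference lies in how you turn this cut pattern into convex order. The paper does it in one line. For convex $f$, let $\ell$ be the chord of $f$ through $-1$ and $b$. Then $f-\ell\le0$ on $I$ and $f-\ell\ge0$ outside $I$, so $\int(f-\ell)\dd(\bar\nu-\bar\mu)\ge0$. Equality of mass and first moment gives $\int\ell\dd(\bar\nu-\bar\mu)=0$. You instead reduce to the functions $(t-\cdot)_+$ and show that $G=u_{\bar\nu}-u_{\bar\mu}$ is unimodal with $G(-2)=G(2)=0$, by tracking the single sign change of $G'$. This is a valid proof of the Karlin--Novikoff criterion. It also gives extra information, namely the shape of the potential difference, which is the object used in potential-function constructions of left-curtain couplings.

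Your route does need an approximation step that the chord argument avoids. Your claim that every convex $f$ on $[-2,2]$ is a uniform limit of affine functions plus nonnegative combinations of $(t-\cdot)_+$ requires $f$ to be continuous. A convex function on a closed interval may jump upward at $\pm2$. This is harmless here, since neither $\bar\mu$ nor $\bar\nu$ charges $\pm2$, so you may replace $f$ by its continuous regularization; but you should say so. The chord argument applies verbatim to every convex $f$.

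Finally, the position of $\eta_b$ relative to $b$ plays no role. The sign pattern holds on all of $[-2,2]\setminus I$ simply because $\bar\mu$ vanishes there.
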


\begin{proof}
Let $I=[-1,b]$ and
$\sigma=\bar\mu-\bar\nu$.  On $I$, the density of $\bar\mu$ is $1/2$
whereas that of $\bar\nu$ is at most $1/4$, so $\sigma|_I\geq0$.
Outside $I$ one has $\sigma\leq0$.  Let $\ell$ be the affine function
interpolating $f$ at $-1$ and $b$.  Convexity gives $f\leq\ell$ on $I$
and $f\geq\ell$ outside $I$.  Therefore
\[
 \int(f-\ell)\dd\sigma\leq0.
\]
Equality of mass and first moment gives
$\int\ell\dd\sigma=0$, and hence $\int f\dd\bar\mu\leq
\int f\dd\bar\nu$.
\end{proof}

For $-1\leq x\leq b$, the prescribed marginal identity for
$\bar\pi|_{[-1,x]\times\mathbb R}$ requires
$-2\leq D(x)\leq x\leq U(x)\leq\eta_b$ and
\begin{align}
 \int_{D(x)}^{U(x)}\bar r(y)\dd y
   &=\frac{x+1}{2}, \label{eq:residual-mass}\\
 \int_{D(x)}^{U(x)}y\bar r(y)\dd y
   &=\frac{x^2-1}{4},\qquad -1\leq x\leq b.
   \label{eq:residual-moment}
\end{align}
Where $D,U$ are differentiable, $D<U$, and both endpoints avoid $1-k$,
these identities give the left-curtain mass balances
\cite{HenryLabordereTouzi,BeiglbockJuillet}:
\begin{align*}
 \bar r(U(x))U'(x)
   &=\frac12\frac{x-D(x)}{U(x)-D(x)},\\
 -\bar r(D(x))D'(x)
   &=\frac12\frac{U(x)-x}{U(x)-D(x)}.
\end{align*}
At an endpoint crossing $1-k$, derivatives are taken one-sided. The
identities are not asserted at $D(-1)=U(-1)=-1$.
At $x=b$, the mass equation requires $[D(b),U(b)]$ to contain all of
$\bar\nu$. Since $\bar r>0$ almost everywhere on $[-2,\eta_b]$,
\begin{equation*}
  D(b)=-2,\qquad U(b)=\eta_b.
\end{equation*}
These values agree with the maps on $[b,1]$.
For each $-1<x\leq b$, the mass and first moment determine the endpoints
uniquely. Indeed, let $Q$ be the inverse of the cumulative mass
function of $\bar\nu$, and set $m=(x+1)/2$. An interval of mass $m$
beginning at cumulative mass $s$ has first moment
$\int_s^{s+m}Q(u)\dd u$. This expression is strictly increasing in
$s$, because $\bar r>0$ throughout the interior of its support.
At $x=-1$, the ordering $D\leq x\leq U$ and zero interval mass force
$D(-1)=U(-1)=-1$.
Propositions~\ref{prop:feasible-plan} and~\ref{prop:small-k} construct
such residual couplings. Their decreasing lower and increasing upper
maps imply left-monotonicity; the uniqueness theorem of
\cite{BeiglbockJuillet}, applied after normalization, identifies each
with the left-curtain coupling of $\bar\mu$ and $\bar\nu$.

\subsection{Construction for
\texorpdfstring{$2\leq k\leq3$}{2 <= k <= 3}}
\label{sec:explicit}

We give an explicit parametrization of $D,U$ on $[-1,b]$ for
$2\leq k\leq3$ and verify that the resulting coupling belongs to
$\M_k(\mu,\nu)$.

Assume $2\leq k\leq3$ and put
\begin{equation*}
  a:=2k-5.
\end{equation*}
On $[-1,a]$, both endpoints lie in $[1-k,\eta_b]$, where
$\bar r=1/4$ almost everywhere. The maps therefore equal \eqref{eq:ordinary-maps}, with
\begin{equation*}
 D(a)=1-k,\qquad U(a)=3k-7.
\end{equation*}

\medskip\noindent\emph{Maps on $[a,b]$.}

Define
\begin{equation*}
  R(\eta):=\sqrt{k^2-(\eta+2)E(\eta)},
\end{equation*}
with $E$ from \eqref{eq:E-rho}. For $\eta\in[\eta_b,2]$ set
\begin{align}
 x_m(\eta)&:=E(\eta)-2+R(\eta),\label{eq:x-middle}\\
 D_m(\eta)&:=E(\eta)+\eta-2k,\label{eq:D-middle}\\
 U_m(\eta)&:=E(\eta)-2+2R(\eta).\label{eq:U-middle}
\end{align}
The following results show that $R$ is real and $x_m$ decreases from
$b$ to $a$ as $\eta$ increases from $\eta_b$ to $2$. Thus
\eqref{eq:D-middle}--\eqref{eq:U-middle} define $D,U$ on $[a,b]$ by
inverting \eqref{eq:x-middle}.

\begin{lemma}[Nonnegativity of $R^2$]\label{lem:R-positive}
For $2\leq k\leq3$ and $\eta\in[\eta_b,2]$,
\[
 k^2-(\eta+2)E(\eta)\geq0.
\]
The inequality is strict except at the degenerate point $(k,\eta)=(2,2)$.
\end{lemma}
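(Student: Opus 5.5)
The plan is to reduce the inequality to a one-variable monotonicity statement in $\eta$ for fixed $k$. I will then evaluate at the right endpoint $\eta=2$, where the exponential factor disappears. Set
\[
 g(\eta):=(\eta+2)E(\eta)=(k-1)(\eta+2)\exp\!\left(\frac{2-\eta}{2k}\right),
\]
so that the claim reads $g(\eta)\leq k^2$ on $[\eta_b,2]$.

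First I will differentiate. Since $E'(\eta)=-E(\eta)/(2k)$,
\[
 g'(\eta)=E(\eta)\left(1-\frac{\eta+2}{2k}\right).
\]
On $[\eta_b,2]$ one has $\eta+2\leq4\leq2k$ because $k\geq2$, so the bracket is nonnegative. Hence $g'\geq0$ and $g$ is nondecreasing. This gives
\[
 g(\eta)\leq g(2)=4(k-1)\quad\text{and}\quad k^2-g(\eta)\geq k^2-4k+4=(k-2)^2\geq0,
\]
which is the inequality.

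For strictness, there are two cases.
\begin{itemize}
\item If $k>2$, then $(k-2)^2>0$, so the inequality is already strict.
\item If $k=2$, then for $\eta<2$ the bracket $1-(\eta+2)/4$ is strictly positive and $E>0$, so $g$ is strictly increasing on $[\eta_b,2]$. Hence $g(\eta)<g(2)=4=k^2$ for every $\eta<2$.
\end{itemize}
Equality therefore occurs only at $(k,\eta)=(2,2)$.

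There is one side point to check: the interval $[\eta_b,2]$ is nondegenerate, which is known from $\eta_b\in(b,2)$. The sign of $\eta+2$ is not needed, since $\eta_b>b=k-2\geq0$ anyway.

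I do not expect a real obstacle here. The only step needing care is the sign of $1-(\eta+2)/(2k)$, and that is exactly where the hypothesis $k\geq2$ enters. This also explains why the explicit parametrization is restricted to $2\leq k\leq3$.
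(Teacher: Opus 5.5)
Your proof is correct and is essentially the paper's argument: the paper differentiates $F(\eta)=k^2-(\eta+2)E(\eta)$, uses $E'=-E/(2k)$ to get $F'(\eta)=\frac{E(\eta)}{2k}(\eta+2-2k)\leq0$ for $\eta\leq2\leq k$, and concludes $F(\eta)\geq F(2)=(k-2)^2$, with the same equality case. One small inaccuracy is harmless: your side remark that $[\eta_b,2]$ is nondegenerate fails at $k=3$, where the paper sets $\eta_b=2$, but the argument never uses nondegeneracy.
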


\begin{proof}
Let $F(\eta)=k^2-(\eta+2)E(\eta)$.  Since $E'=-E/(2k)$,
\[
 F'(\eta)=\frac{E(\eta)}{2k}(\eta+2-2k)\leq0
 \qquad (\eta\leq2,\ k\geq2).
\]
Moreover, $F(2)=(k-2)^2$.  Hence $F(\eta)\geq F(2)\geq0$, with the
stated equality case.
\end{proof}

\begin{proposition}[Continuity, monotonicity, and the constraint]\label{prop:matching}
For $2\leq k\leq3$, the maps defined by
\eqref{eq:ordinary-maps}, \eqref{eq:x-middle}--\eqref{eq:U-middle}, and
\eqref{eq:tail-param} have the following properties.
\begin{enumerate}[label=\textup{(\alph*)},leftmargin=2em]
\item They match continuously at $x=a$ and $x=b$.
\item The lower map is strictly decreasing on $[-1,b]$.
\item The upper map is strictly increasing on $[-1,1]$.
\item The upper displacement $w(x)=U(x)-x$ is nondecreasing.
\item The constraint $D(x)\geq x-k$ holds on $[-1,b]$, with
      equality only at $x=b$ when $k<3$.
\item One has $D(-1)=U(-1)=-1$ and $D(x)<x<U(x)$ for every
      $-1<x\leq1$.
\end{enumerate}
\end{proposition}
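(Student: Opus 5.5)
The plan is to work in the parameter $\eta\in[\eta_b,2]$ on the middle interval, using only $E'=-E/(2k)$, $0<E<k$ on $[b,2]$, and \cref{lem:R-positive}. Differentiating $R^2=k^2-(\eta+2)E$ gives $(R^2)'=E(\eta+2-2k)/(2k)\leq0$, so $R'=E(\eta+2-2k)/(4kR)$ wherever $R>0$. From this,
\[
 x_m'=\frac{E}{4kR}\bigl(\eta+2-2k-2R\bigr),\qquad
 D_m'=1-\frac{E}{2k},\qquad
 U_m'=\frac{E}{2kR}\bigl(\eta+2-2k-R\bigr).
\]
Since $\eta\leq2\leq k$, we have $\eta+2-2k\leq0$. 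Together with $R>0$ this gives $x_m'<0$ and $U_m'<0$, while $E<k$ gives $D_m'>0$. Hence $x_m$ is a strictly decreasing $C^1$ bijection onto its image, and $D=D_m\circ x_m^{-1}$ decreases while $U=U_m\circ x_m^{-1}$ increases in $x$. At the degenerate point $(k,\eta)=(2,2)$, where $R=0$, the interval $[a,b]$ collapses to $\{-1\}$, and I treat it by continuity as the paper prescribes.

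\textbf{(a)} For matching I evaluate the endpoints directly. At $\eta=2$ one has $E=k-1$ and $R^2=(k-2)^2$, so $R=k-2$. This gives $x_m=2k-5=a$, $D_m=1-k$ and $U_m=3k-7$, which are the ordinary left-curtain values at $a$. At $\eta=\eta_b$, \eqref{eq:eta-b} gives $E(\eta_b)=2k-2-\eta_b$. Expanding shows $R(\eta_b)^2=(\eta_b+2-k)^2$, and $\eta_b+2-k\geq0$ because $\eta_b\geq b$. Hence $R(\eta_b)=\eta_b-b$, $x_m=b$, $D_m=-2=b-k$ and $U_m=\eta_b$, which agrees with the tail maps from \eqref{eq:tail-param}.

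\textbf{(b), (c)} These now follow on each of the three pieces. On $[-1,a]$ use the slopes $-1/2$ and $3/2$. On $[a,b]$ use the derivatives above. On $[b,1]$, $D(x)=x-k$ is not needed for (b), and $U'>0$ comes from the terminal-value lemma. Continuity at $a$ and $b$ then glues the pieces into global strict monotonicity.

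\textbf{(d)} The upper displacement is $w=x/2+1/2$ on $[-1,a]$. On $[a,b]$ it is $w=U_m-x_m=R$, which is nonincreasing in $\eta$ and hence nondecreasing in $x$. On $[b,1]$ it is $w=k-E(U(x))$, which increases because $E$ decreases and $U$ increases.

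\textbf{(e)} This is the main computation. On $[-1,a]$, $D_{\lc}(x)-(x-k)=k-\tfrac32(x+1)$ is positive for $x<2k/3-1$, and $a=2k-5\leq 2k/3-1$ exactly when $k\leq3$, with equality only when $k=3$ and $a=b$. On $[a,b]$, $D_m-x_m+k=\eta+2-k-R$. Since $\eta+2-k\geq0$, it suffices to show
\[
 G(\eta):=(\eta+2-k)^2-k^2+(\eta+2)E(\eta)\geq0.
\]
We know $G(\eta_b)=0$ from the computation in (a). Also
\[
 G'(\eta)=2(\eta+2-k)+\frac{E(2k-\eta-2)}{2k}>0\qquad\text{for }\eta>b,
\]
so $G>0$ on $(\eta_b,2]$, and equality holds only at $\eta_b$, that is, at $x=b$.

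\textbf{(f)} On $[-1,a]$ this is the ordinary left-curtain statement. On $[a,b]$, $U_m-x_m=R>0$ and $x_m-D_m=R+2k-2-\eta>0$. On $[b,1]$, $U-x>0$ by the terminal-value lemma and $x-D=k>0$. Finally, $D(-1)=U(-1)=-1$ follows from \eqref{eq:ordinary-maps}.

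I expect the main obstacle to be the sign bookkeeping for $R$: one must choose the nonnegative root consistently with $R(\eta_b)=\eta_b+2-k$ and handle the degenerate corner $k=2$. Establishing (e) through the monotone function $G$ is the only genuinely nontrivial inequality.
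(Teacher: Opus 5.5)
Your proposal is correct and takes essentially the same route as the paper. It uses the same $\eta$-parametrization with the same derivative signs and the same endpoint evaluations at $\eta=2$ and $\eta=\eta_b$. For (e) it uses the same quantity $(\eta+2-k)^2-R^2$, which the paper factors as $(\eta+2)(\rho(\eta)-b)$ where you differentiate instead. Your use of $R>0$ in (f) gives the strict inequalities $D<x<U$, which the paper's written proof records only as $\geq0$.

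One small slip: at $k=2$ it is $[-1,a]$ that collapses, not $[a,b]=[-1,0]$. The point $\eta=2$ is then just the endpoint $x=a=-1$, so your derivative formulas hold on $[\eta_b,2)$ and continuity covers $\eta=2$.
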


\begin{proof}
At $\eta=2$, one has $E(2)=k-1$ and $R(2)=k-2$. Hence
\[
 x_m(2)=2k-5=a,\quad D_m(2)=1-k,\quad U_m(2)=3k-7,
\]
which proves matching at $x=a$.

At $\eta=\eta_b$, equation \eqref{eq:eta-b} says
$E(\eta_b)+\eta_b=2k-2$. Consequently,
\[
 R(\eta_b)^2-(k-E(\eta_b))^2
 =E(\eta_b)\bigl(2k-2-\eta_b-E(\eta_b)\bigr)=0.
\]
The positive root is $R(\eta_b)=k-E(\eta_b)$, and therefore
\[
 x_m(\eta_b)=b,\qquad D_m(\eta_b)=-2,\qquad
 U_m(\eta_b)=\eta_b.
\]

On the interior of every nondegenerate parameter interval,
$E'=-E/(2k)$ and
\begin{equation*}
 R'(\eta)=\frac{E(\eta)(\eta+2-2k)}{4kR(\eta)},\qquad
 x_m'(\eta)=
 \frac{E(\eta)(\eta+2-2k-2R(\eta))}{4kR(\eta)}<0.
\end{equation*}
Moreover $D_m'(\eta)=1-E(\eta)/(2k)>0$, so $D$ decreases as $x$
increases. Since $U=x+R$, its displacement satisfies
\begin{equation*}
 \frac{\dd w}{\dd x}
 =\frac{\eta+2-2k}{\eta+2-2k-2R(\eta)}\geq0.
\end{equation*}
Thus $U'=1+w'>0$. On $[-1,a]$, $w=(x+1)/2$. On $[b,1]$, $w=k-E(\eta)$ and
\[
 \frac{\dd w}{\dd x}=\frac{E(\eta)}{2k-E(\eta)}>0.
\]
On $[-1,a]$,
$D_{\lc}(x)-(x-k)=k-\frac32(x+1)\geq6-2k\geq0$. On $[a,b]$,
\[
 D_m-x_m+k=\eta-k+2-R,\qquad
 (\eta-k+2)^2-R^2=(\eta+2)(\rho(\eta)-b)\geq0.
\]
Since $\eta-k+2>0$, this proves $D\geq x-k$, with equality at
$\eta=\eta_b$, or $x=b$. Finally, $D_{\lc}\leq x$ on $[-1,a]$, while
on $[a,b]$
$x_m-D_m=R+2k-2-\eta\geq0$; also $U-x=w\geq0$ throughout.
The values at the degenerate intervals for $k=2$ and $k=3$ follow by
continuity; the derivative quotients are asserted only where defined.
\end{proof}

\medskip\noindent\emph{Differential equation on $[a,b]$.}

Let
\begin{equation*}
  P(s):=\int_s^1p(r)\dd r,\qquad b\leq s\leq1,
\end{equation*}
where $p$ is the lower weight for sources in $[b,1]$. For $a\leq x\leq b$,
the residual mass equation gives
\begin{equation}\label{eq:U-in-D}
 U(x)=2+2x+D(x)+2P(D(x)+k).
\end{equation}
For $a<x<b$, differentiation using $P'=-p$ yields
\begin{equation*}
 U'(x)=2+\bigl(1-2p(D(x)+k)\bigr)D'(x).
\end{equation*}
Since $\bar r(U)=1/4$, the upper mass balance also gives
\begin{equation*}
 U'(x)=2q(x)=2\frac{x-D(x)}{U(x)-D(x)}.
\end{equation*}
Using $U-D=2+2x+2P(D+k)$ gives
\begin{equation}\label{eq:corrected-ode}
 \boxed{
 D'(x)=
 -\frac{4+2x+2D(x)+4P(D(x)+k)}
 {(1-2p(D(x)+k))(2+2x+2P(D(x)+k))}.}
\end{equation}
For $2<k<3$, the initial condition is $D(a)=1-k$. At $k=2$,
$a=-1$ and $U(a)=D(a)=-1$, so the right-hand side is undefined there.
The parametrization gives the continuous endpoint value and solves the
equation on $(-1,0]$. At $k=3$, $a=b=1$.
The integral in \eqref{eq:corrected-ode} can be evaluated from
\begin{equation*}
  P(s)=\frac12U_{\mathrm a}(s)-s,
\end{equation*}
where $U_{\mathrm a}$ denotes the upper map on $[b,1]$ defined by
\eqref{eq:tail-ode}.  Indeed, both sides vanish at $s=1$, while the
derivative of the right-hand side is $U_{\mathrm a}'/2-1=q-1=-p$.

\begin{lemma}[Solution of the differential equation]
For $2\leq k<3$, the parametrized maps
$x_m,D_m,U_m$ satisfy \eqref{eq:U-in-D} on $[a,b]$ and
\eqref{eq:corrected-ode} on $(a,b)$.  If $2<k<3$, they give the unique $C^1$ solution
of the initial-value problem $D(a)=1-k$ that remains in the admissible
region
\[
 b\leq D(x)+k\leq1,\qquad U(x)>D(x).
\]
For $k=2$, they give a $C^1$ solution on $(-1,0]$ with a continuous
extension to $(D,U)(-1)=(-1,-1)$.
\end{lemma}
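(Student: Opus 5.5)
The plan is to reduce everything to the parametrized identities and then invoke standard ODE uniqueness. First, by \cref{prop:matching}, $x_m$ is a strictly decreasing bijection of $[\eta_b,2]$ onto $[a,b]$, with $x_m'<0$ on the open interval. The inverse function theorem then makes $\eta(x)$, and hence $D=D_m\circ\eta$ and $U=U_m\circ\eta$, $C^1$ on $(a,b)$. For $2<k<3$ one has $R(2)=k-2>0$ by \cref{lem:R-positive}, so $x_m'(2)\neq0$ and the maps are $C^1$ up to $x=a$. For $k=2$ one has $R(2)=0$, so regularity is claimed only on $(-1,0]$, and the endpoint value $(-1,-1)$ comes from the matching in \cref{prop:matching}.

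Next I will verify \eqref{eq:U-in-D} by direct substitution. The key observation is that $D_m(\eta)+k=E(\eta)+\eta-k=\rho(\eta)$. By \eqref{eq:tail-param}, the upper map on $[b,1]$ then satisfies $U_{\mathrm a}(\rho(\eta))=\eta$. The formula $P(s)=\tfrac12U_{\mathrm a}(s)-s$ therefore gives
\[
 P(D_m+k)=\frac{\eta}{2}-\rho(\eta)=k-E(\eta)-\frac{\eta}{2}.
\]
Substituting this into $2+2x_m+D_m+2P(D_m+k)$ and inserting \eqref{eq:x-middle}--\eqref{eq:D-middle}, the terms collapse to $E-2+2R=U_m$.

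For \eqref{eq:corrected-ode}, note that differentiating \eqref{eq:U-in-D} gives $U'=2+(1-2p(D+k))D'$. Since $0<p<1/2$, the factor $1-2p$ is positive. Moreover $U-D=2+2x+2P(D+k)$. Consequently \eqref{eq:corrected-ode} is equivalent, algebraically, to the upper balance $U'=2(x-D)/(U-D)$. To check this balance I use the displacement derivative from the proof of \cref{prop:matching}:
\[
 U'=1+\frac{\dd w}{\dd x}=\frac{2(R+2k-2-\eta)}{2R+2k-2-\eta}.
\]
From the parametrization, $x_m-D_m=R+2k-2-\eta$ and $U_m-D_m=2R+2k-2-\eta$. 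The balance therefore holds identically.

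For uniqueness when $2<k<3$, I regard \eqref{eq:corrected-ode} as $D'=G(x,D)$ on the admissible region $b\le D+k\le1$, $U>D$.
\begin{itemize}
\item On $[b,1]$, $p$ is $C^1$ with $p<1/2$, and $P$ is $C^1$.
\item The denominator factor $1-2p$ is positive there.
\item The other denominator factor equals $U-D>0$ by assumption.
\end{itemize}
Hence $G$ is locally Lipschitz in $D$, uniformly in $x$ on compact subsets of the region. At the initial point $D(a)=1-k$ one has $D+k=1$, which is the boundary, and $U-D=4k-8>0$. To handle the boundary I will extend $p$ in a $C^1$ way slightly beyond $s=1$; Picard--Lindel\"of then yields a unique local solution. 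Any admissible solution is a solution of the extended equation, so it must agree with the parametrized one. A continuation argument then gives agreement on all of $[a,b]$.

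The main obstacle I expect is exactly this boundary start at $D+k=1$. It requires checking that $p$ extends Lipschitz across $s=1$. It also requires checking that $U-D$ stays bounded away from zero along any admissible solution on compact subintervals, so that continuation does not stop before $x=b$. I will get this lower bound from the strict inequality $U_m-D_m=2R+2k-2-\eta>0$, which follows from \cref{lem:R-positive} together with $\eta\le2$.
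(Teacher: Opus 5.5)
Your proposal is correct and follows essentially the same route as the paper: you substitute $s=D_m+k=\rho(\eta)$ with $U_{\mathrm a}(s)=\eta$ and $P(s)=\eta/2-\rho(\eta)$ to get \eqref{eq:U-in-D}, check the upper balance $U_x=2(R+C)/(2R+C)=2(x-D)/(U-D)$ with $C=2k-2-\eta$, differentiate \eqref{eq:U-in-D} to obtain \eqref{eq:corrected-ode}, and deduce uniqueness from $1-2p>0$ and $U-D=2R+C>0$ (equal to $4(k-2)$ at $x=a$) through local Lipschitz continuity. Your added care at the boundary start $D(a)+k=1$ (extending $p$ past $s=1$, or comparing two admissible solutions directly by Gronwall) and at the $k=2$ endpoint fills in details that the paper leaves implicit.
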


\begin{proof}
Put $s=D_m(\eta)+k$.  The definitions give
$s=E(\eta)+\eta-k=\rho(\eta)$, so the parametrization on $[b,1]$ and the
identity for $P$ imply
\[
 U_{\mathrm a}(s)=\eta,\qquad
 P(s)=\frac{\eta}{2}-\rho(\eta).
\]
Substitution in the right-hand side of \eqref{eq:U-in-D} gives $U_m$.
Writing $C=2k-2-\eta$, direct differentiation also gives
\[
 U_x=2\frac{R+C}{2R+C}
     =2\frac{x-D}{U-D}.
\]
Differentiating \eqref{eq:U-in-D} and solving for $D'$ now yields
\eqref{eq:corrected-ode}. For $a<x<b$,
\[
 1-2p(s)=\frac{E(\eta)}{2k-E(\eta)}>0,\qquad
 U-D=2R+C>0.
\]
For $2<k<3$, the latter denominator equals $4(k-2)>0$ at $x=a$.
Thus the right-hand side of \eqref{eq:corrected-ode} is continuous in $x$
and locally Lipschitz in $D$ throughout the stated region, so the
initial-value problem has at most one solution.  At $k=2$ the same computations are valid
away from the singular endpoint, and the endpoint values established above
give the stated continuous extension.
\end{proof}

\begin{proposition}[Feasibility of the explicit coupling]\label{prop:feasible-plan}
Let $D_k,U_k$ be the maps in \cref{prop:matching}, and, for $x>-1$, put
\begin{equation*}
 q_k(x):=\frac{x-D_k(x)}{U_k(x)-D_k(x)}.
\end{equation*}
Set $q_k(-1)=1/2$; this arbitrary choice is immaterial because
$D_k(-1)=U_k(-1)=-1$ and $\mu$ has no atom there.
The maps are continuous, hence Borel measurable, and
$q_k(x)\in[0,1]$ by \cref{prop:matching}.
Then
\begin{equation}\label{eq:pi-k}
 \pi_k(\dd x,\dd y)
 =\mu(\dd x)\left[(1-q_k(x))\delta_{D_k(x)}(\dd y)
                  +q_k(x)\delta_{U_k(x)}(\dd y)\right]
\end{equation}
belongs to $\M_k(\mu,\nu)$.
\end{proposition}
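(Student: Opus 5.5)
To show $\pi_k\in\M_k(\mu,\nu)$, I will verify the four defining conditions in turn: first marginal, martingale property, support in $\Gamma_k$, and second marginal. The first three are immediate from \cref{prop:matching}. The first marginal is $\mu$ by construction. Because $q_k\in[0,1]$, each $\pi_{k,x}$ is a probability measure. For $x>-1$ we have $(1-q_k)D_k+q_kU_k=x$ by the definition of $q_k$, and at $x=-1$ both atoms sit at $-1$. The support condition $\pi_k(\Gamma_k)=1$ holds because $U_k\geq x\geq x-k$, and because $D_k(x)\geq x-k$ on $[-1,b]$ by part (e) while $D_k(x)=x-k$ on $[b,1]$.

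The real content is the second marginal. I will split $\pi_k=\pi^{\mathrm{res}}+\pi^{\mathrm{tail}}$ according to whether the source lies in $[-1,b]$ or $[b,1]$.

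For the tail, the lemma on the terminal-value problem gives $U'=2q$ on $[b,1]$. So the upper branch pushes density $q/2$ forward to density $1/4$ on $[\eta_b,2]$. The lower branch $y=x-k$ has slope $1$ and carries density $p(x)/2$, so it produces density $p(y+k)/2$ on $[-2,1-k]$. Together these give exactly $\nu-\bar\nu$ by the definition of $\bar r$.

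It therefore remains to show that the second marginal of $\pi^{\mathrm{res}}$ is $\bar\nu$. I will do this through the cumulative identity
\[
 (\operatorname{pr}_2)_\#\bigl(\pi^{\mathrm{res}}|_{[-1,x]\times\R}\bigr)=\bar\nu|_{[D_k(x),U_k(x)]},\qquad -1\leq x\leq b .
\]
At $x=b$ this is the claim, since $[D_k(b),U_k(b)]=[-2,\eta_b]$ by part (a). Both sides vanish at $x=-1$. Because $D_k$ decreases and $U_k$ increases, the image of $[-1,x]$ under the lower branch is $[D_k(x),-1]$ and under the upper branch is $[-1,U_k(x)]$. So it suffices to check two density identities: the lower branch pushes forward to $\bar r$ on $[-2,-1]$, and the upper branch to $\bar r$ on $[-1,\eta_b]$. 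By change of variables these reduce to
\[
 \bar r(U_k)U_k'=\tfrac12 q_k,\qquad -\bar r(D_k)D_k'=\tfrac12(1-q_k),
\]
almost everywhere on $(-1,b)$.

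I will check these balances piece by piece.
\begin{itemize}
\item On $(-1,a)$ the maps are the ordinary left-curtain ones with $\bar r=1/4$ at both endpoints, so the proof of \cref{prop:ordinary} applies. The only thing to confirm is that the endpoints stay in $[1-k,\eta_b]$: here $U_k\leq 3k-7\leq\eta_b$, which follows from monotonicity and continuity at $b$.
\item On $(a,b)$, the lemma on the differential equation already gives the upper balance $U'=2q$ with $\bar r(U)=1/4$. This needs $U_k\leq\eta_b$, which holds since $U_k$ increases to $\eta_b$. For the lower balance, $D_k<1-k$ there, so $\bar r(D)=\tfrac14-\tfrac12p(D+k)$. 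I will not check this derivative identity directly. Instead I will use \eqref{eq:U-in-D}, which encodes the mass equation \eqref{eq:residual-mass}, and then apply the upper balance. Differentiating \eqref{eq:residual-mass} gives $\bar r(U)U'-\bar r(D)D'=\tfrac12$, and subtracting the upper balance yields the lower one.
\end{itemize}

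Alternatively, I can avoid derivatives on $(a,b)$ altogether. The plan is to show that \eqref{eq:residual-mass} and \eqref{eq:residual-moment} hold for all $x$, since both hold at $x=a$ and their derivatives agree. Those two identities, combined with the monotonicity of the branches, give the cumulative identity directly.

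The main obstacle is the first-moment identity \eqref{eq:residual-moment} on $[a,b]$. It is not written down explicitly among the earlier results: \eqref{eq:U-in-D} encodes only the mass equation. My plan is to differentiate \eqref{eq:residual-moment}, giving $\bar r(U)UU'-\bar r(D)DD'=x/2$. This follows from the two balances together with $(1-q)D+qU=x$, so that identity needs to be made explicit.

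Measurability and the endpoint degeneracies at $k=2$ and $k=3$ are handled by continuity. They are null sets for $\mu$.
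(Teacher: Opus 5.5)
Your proposal is correct and essentially follows the paper's argument. Both get the martingale property and the constraint from \cref{prop:matching} and then check target densities piece by piece. The one real difference is on $(a,b)$. There you obtain the lower-branch density $\bar r(D)$ by differentiating the mass identity \eqref{eq:U-in-D} and subtracting the upper balance $U'=2q$. The paper instead computes $D'=-2(2k-E)R/(E(2R+C))$ explicitly and adds the tail contribution $p(s)/2$ to reach $1/4$. Note also that once both balances hold, the pushforward identity follows directly, so your main route does not need the first-moment identity \eqref{eq:residual-moment} that you call the main obstacle.
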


\begin{proof}
The definition of $q_k$ gives the martingale property pointwise. The
constraint follows from $D_k(x)\geq x-k$, with equality on $[b,1]$.
To check the target marginal, for $a<x<b$ put
$C(\eta):=2k-2-\eta$. Equations
\eqref{eq:x-middle}--\eqref{eq:U-middle} give
\[
 q=\frac{R+C}{2R+C},\qquad
 U'(x)=\frac{2(R+C)}{2R+C}=2q,\qquad
 D'(x)=-\frac{2(2k-E)R}{E(2R+C)}.
\]
Thus transport along $U|_{[a,b]}$ has target density $1/4$.
For $y=D_m(\eta)\in(-2,1-k)$, there are two lower-map preimages: $x_m(\eta)$
and $s=y+k=\rho(\eta)\in[b,1]$. Their target densities sum to
\[
 \frac{(1-q)/2}{-D'}
 +\frac{p(s)}2
 =\frac{E}{4(2k-E)}
  +\frac{k-E}{2(2k-E)}
 =\frac14.
\]
The maps on $[-1,a]$ each give target density $1/4$, as does
$U|_{[b,1]}$ by \eqref{eq:tail-ode}. The images are
\begin{center}
\begin{tabular}{@{}lll@{}}
\toprule
Source interval & Image under $D$ & Image under $U$\\
\midrule
$[-1,a]$ & $[1-k,-1]$ & $[-1,3k-7]$\\
$[a,b]$ & $[-2,1-k]$ & $[3k-7,\eta_b]$\\
$[b,1]$ & $[-2,1-k]$ & $[\eta_b,2]$\\
\bottomrule
\end{tabular}
\end{center}
The lower-map contributions sum to $1/4$ on $[-2,1-k]$; the remaining
images partition $[1-k,2]$ up to endpoints, each with density $1/4$.
Degenerate source intervals at $k=2,3$ carry no mass. The second
marginal is therefore $\nu$.
\end{proof}

\subsection{The regime
\texorpdfstring{$1<k<2$}{1 < k < 2}}\label{sec:small-k}

We complete the construction for $1<k<2$ by solving the residual
marginal equations on $[-1,b]$. Since $1-k>-1$, the formulas change
at a point $x_0\in(-1,b)$ such that
\[
 \begin{array}{ll}
 -2\leq D(x)\leq U(x)\leq1-k,& -1\leq x\leq x_0,\\
 -2\leq D(x)\leq1-k\leq U(x)\leq\eta_b,& x_0\leq x\leq b,
 \end{array}
 \qquad U(x_0)=1-k.
\]
We determine $x_0$ and the maps on $[x_0,b]$ explicitly in a parameter.
On $[-1,x_0]$, a change of variable satisfying
$\bar r(y(\delta))y'(\delta)=1/4$ reduces the two marginal equations to
one strictly monotone scalar equation. The source and target intervals
are shown in \cref{fig:construction-geometry}.

\begin{figure}[tbp]
\centering
\begin{tikzpicture}[x=0.98cm,y=0.98cm,font=\small,>=stealth]
\foreach \shift/\title/\range/\leftend/\rightend in {
  0/{$U(x)<1-k$}/{$-1<x<x_0$}/0.8/2.2,
  5.7/{$U(x)>1-k$}/{$x_0<x<b$}/0.3/4.0} {
  \begin{scope}[xshift=\shift cm]
    \node[font=\small\bfseries] at (2.2,2.55) {\title};
    \node at (2.2,2.08) {\range};
    \fill[black!8] (0,0.68) rectangle (2.9,1.15);
    \draw[black!45,densely dotted] (2.9,0.55)--(2.9,1.48);
    \draw[->,black!65] (-0.05,0.68)--(4.65,0.68) node[right] {$y$};
    \fill[MidnightBlue!18] (\leftend,0.68) rectangle (\rightend,1.15);
    \draw[MidnightBlue!85!black,thick] (\leftend,1.15)--(\rightend,1.15);
    \draw[BrickRed!85!black,thick,fill=white] (\leftend,1.15) circle (0.045);
    \draw[MidnightBlue!90!black,thick,fill=white]
      (\rightend-0.045,1.105) rectangle (\rightend+0.045,1.195);
    \node[above] at (\leftend,1.18) {$D(x)$};
    \node[above] at (\rightend,1.18) {$U(x)$};
    \foreach \position/\label in {0/{-2},2.9/{1-k},4.35/{U(b)}} {
      \draw (\position,0.64)--(\position,0.72);
      \node[below] at (\position,0.62) {$\label$};
    }
    \node[font=\footnotesize] at (1.42,0.03) {$0<\bar r(y)<1/4$};
    \draw[decorate,decoration={brace,mirror,amplitude=3pt},black!55]
      (0,0.25)--(2.9,0.25);
  \end{scope}
}
\node[align=center,font=\small] at (5.05,-0.6)
  {$\bar\mu|_{[-1,x]}$ and $\bar\nu|_{[D(x),U(x)]}$\\
   have equal mass and first moment.};
\begin{scope}[yshift=-4.0cm]
  \node[font=\small\bfseries] at (5.05,2.25) {$D(x)=x-k$, $b<x<1$};
  \draw[->,black!65] (1.1,0.35)--(9.2,0.35) node[right] {$y$};
  \draw[BrickRed!85!black,thick,fill=white] (2.0,0.35) circle (0.055);
  \draw[MidnightBlue!90!black,thick,fill=white] (8.04,0.29) rectangle (8.16,0.41);
  \node[below] at (2.0,0.22) {$D(x)=x-k$};
  \node[below] at (8.1,0.22) {$U(x)$};
  \fill (5.05,1.55) circle (0.05);
  \node[above] at (5.05,1.62) {source $x$};
  \draw[->,BrickRed!85!black,thick] (4.93,1.48)--(2.13,0.42)
     node[midway,above left] {$1-q(x)$};
  \draw[->,MidnightBlue!90!black,thick,dashed] (5.17,1.48)--(7.97,0.42)
     node[midway,above right] {$q(x)$};
\end{scope}
\end{tikzpicture}
\caption{The construction for $1<k<2$ (schematic). Upper panels:
for each $x\in[-1,b]$, the restriction of the coupling to
$[-1,x]\times\mathbb R$ has second marginal
$\bar\nu|_{[D(x),U(x)]}$. The formulas change at $U(x_0)=1-k$.
Lower panel: for $x\in[b,1]$, the conditional law is
$(1-q(x))\delta_{x-k}+q(x)\delta_{U(x)}$, with mean $x$.}
\label{fig:construction-geometry}
\end{figure}

For $0\leq\delta\leq1$, define
\begin{equation*}
 z(\delta):=2-\delta+2k\log\frac{k-1}{k-\delta},
 \qquad y(\delta):=z(\delta)-k.
\end{equation*}
Substituting $\delta=U-x$ into \eqref{eq:tail-implicit} gives
$x=z(\delta)$. Moreover, $z(1)=1$ and
\[
 z'(\delta)=\frac{k+\delta}{k-\delta}\geq1,
 \qquad
 z(0)-b=4-k+2k\log\left(1-\frac1k\right)
 <-\frac{(k-1)^2}{k}<0.
\]
The logarithmic bound follows from $\log(1-u)<-u-u^2/2$ with
$u=1/k$. Since $z$ is strictly increasing, there is a unique
$\delta_b\in(0,1)$ satisfying $z(\delta_b)=b$, and
$z:[\delta_b,1]\to[b,1]$ is a $C^1$ bijection. In particular,
$y(\delta_b)=-2$, $y(1)=1-k$, and $U(b)=b+\delta_b=\eta_b$.
The maps on $[b,1]$ become
\begin{equation}\label{eq:tail-delta}
  x=z(\delta),\qquad D=x-k,\qquad U=x+\delta,
  \qquad \delta_b\leq\delta\leq1.
\end{equation}
For a source $x=z(\delta)$, the lower weight is
$p(x)=\delta/(k+\delta)$. Hence
\begin{equation}\label{eq:uniform-mass-coordinate}
 z'(\delta)=\frac{k+\delta}{k-\delta},\qquad
 \bar r(y(\delta))=\frac{k-\delta}{4(k+\delta)},\qquad
 \bar r(y(\delta))y'(\delta)=\frac14.
\end{equation}
Define
\begin{equation}\label{eq:W-delta}
\begin{split}
 \mathcal W(\delta):={}&b^2-2b(\delta-1)\\
 &+2k\left[(k-\delta)\log\frac{k-\delta}{k-1}
                         +\delta-1\right].
\end{split}
\end{equation}

An antiderivative of $y$ is
\begin{equation}\label{eq:A-delta}
\begin{split}
 A(\delta)={}&(2-k)\delta-\frac{\delta^2}{2}\\
 &+2k\left[\delta\log(k-1)
 +(k-\delta)\log(k-\delta)-(k-\delta)\right].
\end{split}
\end{equation}

\begin{lemma}[Marginal equations when $D\leq1-k\leq U$]
\label{lem:small-middle-reduction}
Fix $\delta\in[\delta_b,1]$ and set $D=y(\delta)$. Suppose
$-1\leq x\leq b$, $D\leq x\leq U$, and $1-k\leq U\leq\eta_b$.
Then
\eqref{eq:residual-mass}--\eqref{eq:residual-moment} are equivalent to
\begin{equation}\label{eq:small-middle-reduction}
 x=b-\delta+W,\qquad U=b-\delta+2W,
 \qquad W^2=\mathcal W(\delta),\qquad W\geq0.
\end{equation}
\end{lemma}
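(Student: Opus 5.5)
The plan is to evaluate the two residual integrals in \eqref{eq:residual-mass}--\eqref{eq:residual-moment} directly. We split the range of integration at $1-k$: $\bar r$ is given by the tail formula on $[D,1-k]$ and equals $1/4$ on $[1-k,U]$. Since $D=y(\delta)$ with $\delta\in[\delta_b,1]$, we have $y(\delta_b)=-2$ and $y(1)=1-k$, and $y$ is increasing. So on $[D,1-k]$ we substitute $y=y(t)$, $t\in[\delta,1]$. By \eqref{eq:uniform-mass-coordinate}, $\bar r(y(t))y'(t)=1/4$, so the lower piece contributes mass $(1-\delta)/4$ and first moment $\tfrac14\int_\delta^1 y(t)\dd t=\tfrac14\bigl(A(1)-A(\delta)\bigr)$. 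Here we use that $A$ in \eqref{eq:A-delta} is an antiderivative of $y$, which is checked by differentiating. The upper piece contributes mass $(U-1+k)/4$ and moment $\bigl(U^2-(1-k)^2\bigr)/8$. The hypothesis $D\le 1-k\le U\le\eta_b$ guarantees that this split is exactly the support structure of $\bar\nu$ on $[D,U]$.

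Next, the mass equation becomes $(1-\delta)+(U-1+k)=2(x+1)$, that is, $U=2x-b+\delta$. Define $W:=U-x=x-b+\delta$. Then the mass equation is equivalent to the pair $x=b-\delta+W$ and $U=b-\delta+2W$, and the condition $x\le U$ is exactly $W\ge0$. Put $c=b-\delta$. Multiplying the moment equation by $8$ gives
\[
 2\bigl(A(1)-A(\delta)\bigr)+U^2-(1-k)^2=2x^2-2 .
\]
Substituting $x=c+W$ and $U=c+2W$, the cross terms $4cW$ cancel, since $U^2-2x^2=2W^2-c^2$. Hence, given the mass equation, the moment equation is equivalent to
\[
 W^2=\tfrac12\Bigl[c^2+(1-k)^2-2-2\bigl(A(1)-A(\delta)\bigr)\Bigr]=:\widetilde{\mathcal W}(\delta).
\]
Every step is a reversible substitution, so the two systems are equivalent once we show $\widetilde{\mathcal W}=\mathcal W$.

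The main (though routine) obstacle is this identity $\widetilde{\mathcal W}(\delta)=\mathcal W(\delta)$ with \eqref{eq:W-delta}. Rather than expand the logarithms, we compare values at $\delta=1$ and derivatives.

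At $\delta=1$, using $b=k-2$, we get $\widetilde{\mathcal W}(1)=\tfrac12\bigl[(k-3)^2+(k-1)^2-2\bigr]=(k-2)^2=b^2=\mathcal W(1)$.

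For the derivatives, first $\widetilde{\mathcal W}'(\delta)=-(b-\delta)+y(\delta)=\delta-b+z(\delta)-k$. By the definition of $z$ this equals $4-2k-2b+\dots$; more precisely, $2-k-b+2k\log\frac{k-1}{k-\delta}=-2b+2k\log\frac{k-1}{k-\delta}$, since $2-k-b=-2b$. Differentiating \eqref{eq:W-delta} gives $\mathcal W'(\delta)=-2b+2k\bigl[-\log\frac{k-\delta}{k-1}-1+1\bigr]$, which is the same expression.

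Hence the two functions agree on $[\delta_b,1]$, which yields \eqref{eq:small-middle-reduction}.
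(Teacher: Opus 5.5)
Your proposal is correct and follows the paper's proof. You split the residual integrals at $y(1)=1-k$ using $\bar r(y(\delta))y'(\delta)=1/4$, read off $U=2x-b+\delta$ from the mass equation, and substitute into the moment equation; checking $\widetilde{\mathcal W}=\mathcal W$ by matching values at $\delta=1$ and derivatives is just a clean way to do the algebra the paper leaves implicit. The only blemish is the stray phrase ``this equals $4-2k-2b+\dots$'', which is wrong as written (that constant is $-4b$); delete it, since your ``more precisely'' line already gives the correct $-2b+2k\log\frac{k-1}{k-\delta}$.
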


\begin{proof}
By \eqref{eq:uniform-mass-coordinate}, splitting the target integrals
at $y(1)=1-k$ gives
\begin{align*}
 \int_D^U\bar r(y)\dd y
 &=\frac{1-\delta}{4}+\frac{U-(1-k)}4,\\
 \int_D^Uy\bar r(y)\dd y
 &=\frac{A(1)-A(\delta)}4
   +\frac{U^2-(1-k)^2}{8}.
\end{align*}
The mass equation is $U=2x-b+\delta$. With $W=U-x$, this gives
the formulas for $x,U$ in \eqref{eq:small-middle-reduction}. Substituting
them and \eqref{eq:A-delta} into the first-moment equation gives
$W^2=\mathcal W(\delta)$. Each substitution is reversible, proving
the equivalence.
\end{proof}

Define
\begin{equation}\label{eq:T-delta}
 T(\delta):=A(1)-A(\delta)
 -\left(\frac{(1+\delta)^2}{4}-1\right).
\end{equation}

\begin{lemma}[The root of $T$ and positivity of $\mathcal W$]\label{lem:small-k-transition}
There is a unique $\delta_0\in(\delta_b,1)$ such that
$T(\delta_0)=0$, with $T<0$ on $[\delta_b,\delta_0)$ and
$T>0$ on $(\delta_0,1)$. Furthermore,
\[
 \mathcal W(\delta)>0\quad(\delta_b\leq\delta\leq\delta_0),
 \qquad
 \sqrt{\mathcal W(\delta_b)}=\delta_b,
 \qquad
 \sqrt{\mathcal W(\delta_0)}=\frac{3-2k+\delta_0}{2}>0.
\]
\end{lemma}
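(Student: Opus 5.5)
The plan is to reduce everything to elementary calculus in $\delta$, using $A'=y$ together with the defining relation of $z$. Differentiating \eqref{eq:T-delta} gives
\[
 T'(\delta)=-y(\delta)-\frac{1+\delta}{2},\qquad
 T''(\delta)=-y'(\delta)-\frac12=-z'(\delta)-\frac12<0,
\]
so $T$ is strictly concave on $[\delta_b,1]$. At the right endpoint, $T(1)=0-\bigl(\tfrac{4}{4}-1\bigr)=0$, and $y(1)=1-k$ gives $T'(1)=k-2<0$; hence $T>0$ on a left neighbourhood of $1$. By concavity, $\{T\geq0\}$ is an interval containing $1$. So once $T(\delta_b)<0$ is known, there is a unique $\delta_0\in(\delta_b,1)$ with $T<0$ on $[\delta_b,\delta_0)$ and $T>0$ on $(\delta_0,1)$.

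Next I would relate $T$ to $\mathcal W$. The value $\delta_0$ is meant to be the parameter at which the upper endpoint of \cref{lem:small-middle-reduction} reaches $U=1-k$. From $U=b-\delta+2W$ this means $W=(3-2k+\delta)/2$. Expanding $\mathcal W(\delta)-\bigl((3-2k+\delta)/2\bigr)^2$ with \eqref{eq:W-delta} and \eqref{eq:A-delta}, and using $b=k-2$, should give an identity of the form
\[
 \mathcal W(\delta)-\Bigl(\frac{3-2k+\delta}{2}\Bigr)^2=-c\,T(\delta)
\]
for an explicit constant $c>0$; I expect $c=2$, but this must be checked. Equivalently, $T=0$ is the first-moment equation of \eqref{eq:residual-moment} with the target interval ending exactly at $1-k$. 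This identity yields $\sqrt{\mathcal W(\delta_0)}=|3-2k+\delta_0|/2$ immediately.

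For the left endpoint, I would use $z(\delta_b)=b$ in the form $2k\log\frac{k-1}{k-\delta_b}=b-2+\delta_b$. Substituting this into \eqref{eq:W-delta} and collecting terms gives $\mathcal W(\delta_b)=\delta_b^2$; I have checked that the $k$-dependent terms cancel. Thus $\sqrt{\mathcal W(\delta_b)}=\delta_b$. Through the identity, $T(\delta_b)<0$ is then equivalent to $\delta_b^2>\bigl((3-2k+\delta_b)/2\bigr)^2$, which reduces to the pair of linear inequalities $\delta_b>3-2k$ and $3\delta_b>2k-3$. These I would prove from bounds on $\delta_b$ obtained from $z(\delta_b)=b$, using the logarithm estimates already used for $z(0)-b$ and $k\in(1,2)$. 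For positivity of $\mathcal W$, I note that $\mathcal W'(\delta)=z(\delta)+\delta-2-2b$ and $\mathcal W''=z'+1>0$, so $\mathcal W$ is convex. On $[\delta_b,\delta_0]$ we have $T\leq0$, so by the identity $\mathcal W(\delta)\geq\bigl((3-2k+\delta)/2\bigr)^2$. It remains to rule out zeros of $\mathcal W$ where $3-2k+\delta$ vanishes. For this I would combine convexity with the positive endpoint values $\delta_b^2$ and $\mathcal W(\delta_0)$, checking the sign of $\mathcal W'(\delta_b)=2\delta_b-2-b$ to see whether $\mathcal W$ is monotone on the interval.

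I expect the main obstacle to be the strict inequality $3-2k+\delta_0>0$, that is, $\delta_0>2k-3$. This bound is nontrivial as $k\uparrow2$, where it requires $\delta_0$ to approach $1$. My first attempt will be to evaluate $T$ at $\delta=\max(\delta_b,2k-3)$ and show $T<0$ there, so that the root lies to the right. Since $T$ is concave with $T(1)=0$ and $T'(1)=k-2$, a Taylor bound near $\delta=1$ with the explicit $T''=-z'-1/2$ should give this, using rational bounds on $\log$ in the spirit of the estimate for $z(0)-b$.
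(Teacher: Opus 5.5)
Your outline is essentially correct, but it takes a partly different route from the paper. The identity you anticipate holds with $c=1$: $\mathcal W(\delta)-\alpha(\delta)^2=-T(\delta)$, where $\alpha(\delta)=(3-2k+\delta)/2$.

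For $T(\delta_b)<0$, the paper does not use this identity. Since $y$ is strictly convex, Hermite--Hadamard gives $\frac{A(1)-A(\delta_b)}{1-\delta_b}<\frac{y(\delta_b)+y(1)}{2}=-\frac{1+k}{2}<-\frac{3+\delta_b}{4}$. This is exactly $T(\delta_b)<0$, and it needs only $\delta_b<1<2k-1$. Your reduction to $\delta_b>3-2k$ and $3\delta_b>2k-3$ is valid, but it leaves two logarithmic estimates to prove. Both hold:
\begin{itemize}
\item $z(3-2k)=2k-1-2k\log 3<k-2$, since $\log 3>1$;
\item $z\bigl(\tfrac{2k-3}{3}\bigr)<k-2$ follows from $\log u\ge 2(u-1)/(u+1)$ at $u=\tfrac{k+3}{3(k-1)}$, which gives $2k\log u\ge 2(3-k)>\tfrac{15-5k}{3}$.
\end{itemize}

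At the main obstacle the trade-off reverses. The paper proves $T(2k-3)<0$ through the identity, bounding $\mathcal W(2k-3)>\varepsilon^2(5-4\varepsilon)$ with $\varepsilon=2-k$ via $\log\frac{1+\varepsilon}{1-\varepsilon}>2\varepsilon$. Your Taylor plan needs no logarithm bound at all. From $T(1)=0$, $T'(1)=k-2$ and $T''=-z'-\tfrac12\le-\tfrac32$ you get $T(t)\le(2-k)(1-t)-\tfrac34(1-t)^2$, hence $T(2k-3)\le-(2-k)^2<0$. So the paper's route is cheaper at $\delta_b$, and yours is cheaper at $\delta_0$.

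One step should be replaced. For positivity of $\mathcal W$ you used only $T\le0$ and then proposed a convexity and monotonicity argument. That would not close the proof, for three reasons:
\begin{itemize}
\item a convex function with positive endpoint values can still vanish in between;
\item $\mathcal W'(\delta_b)=\delta_b-k<0$, not $2\delta_b-2-b$;
\item $\mathcal W$ need not be monotone on $[\delta_b,\delta_0]$; for $k$ close to $1$ it first decreases and then increases.
\end{itemize}
The detour is also unnecessary. You already have $T<0$ strictly on $[\delta_b,\delta_0)$, so the identity gives $\mathcal W>\alpha^2\ge0$ there. At the right endpoint, $\mathcal W(\delta_0)=\alpha(\delta_0)^2>0$ once $\alpha(\delta_0)>0$ is established. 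This is exactly how the paper concludes.
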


\begin{proof}
Since $A'=y$, one has
\[
 T''(\delta)=-y'(\delta)-\frac12<0,\qquad
 T(1)=0,\qquad T'(1)=k-2<0.
\]
Therefore $T>0$ immediately to the left of $1$. Strict convexity
of $y$, together with $y(\delta_b)=-2$ and $y(1)=1-k$, gives
\[
 \frac{A(1)-A(\delta_b)}{1-\delta_b}
 <\frac{y(\delta_b)+y(1)}2
 =-\frac{1+k}{2}< -\frac{3+\delta_b}{4}.
\]
The last inequality uses $\delta_b<1<2k-1$. Since
\[
 \frac{(1+\delta_b)^2/4-1}{1-\delta_b}
 =-\frac{3+\delta_b}{4},
\]
we obtain $T(\delta_b)<0$. Strict concavity gives exactly one zero
$\delta_0$ in $(\delta_b,1)$ and the asserted signs of $T$.

Put
\[
 \alpha(\delta):=\frac{3-2k+\delta}{2}.
\]
Equations \eqref{eq:W-delta}--\eqref{eq:T-delta} give
\begin{equation}\label{eq:W-T-identity}
 \mathcal W(\delta)-\alpha(\delta)^2=-T(\delta).
\end{equation}
To prove $\alpha(\delta_0)>0$, set
$t=2k-3$. If $t\leq\delta_b$, then $t<\delta_0$. Otherwise
$t\in(\delta_b,1)$; writing $\varepsilon=2-k\in(0,1/2)$ gives
\begin{align*}
 \mathcal W(t)
 &=k^2-4(3-k)
   +2k(3-k)\log\frac{3-k}{k-1}\\
 &>k^2-4(3-k)+4k(3-k)\varepsilon
   =\varepsilon^2(5-4\varepsilon)>0,
\end{align*}
where
$\log((1+\varepsilon)/(1-\varepsilon))>2\varepsilon$.
As $\alpha(t)=0$, \eqref{eq:W-T-identity} yields $T(t)<0$,
so again $t<\delta_0$. Hence $\alpha(\delta_0)>0$.

For $\delta<\delta_0$, \eqref{eq:W-T-identity} and $T(\delta)<0$
give $\mathcal W(\delta)>\alpha(\delta)^2$; at $\delta_0$ they give
$\mathcal W(\delta_0)=\alpha(\delta_0)^2>0$. Finally,
$z(\delta_b)=b$ is equivalent to
\[
 2k\log\frac{k-\delta_b}{k-1}=2-b-\delta_b.
\]
Substitution into \eqref{eq:W-delta} gives
$\mathcal W(\delta_b)=\delta_b^2$, completing the proof.
\end{proof}

Set $W(\delta)=\sqrt{\mathcal W(\delta)}$ for
$\delta\in[\delta_b,\delta_0]$ and define
\begin{equation}\label{eq:middle-delta}
 x=b-\delta+W(\delta),\qquad
 D=z(\delta)-k,
 \qquad U=b-\delta+2W(\delta).
\end{equation}
The endpoint values are
\begin{align*}
 (x,D,U)(\delta_b)&=(b,-2,b+\delta_b),\\
 (x,D,U)(\delta_0)&=
 \left(-\frac{1+\delta_0}{2},y(\delta_0),1-k\right).
\end{align*}
The second identity uses $W(\delta_0)=\alpha(\delta_0)$. Set
$x_0=-(1+\delta_0)/2$. Moreover,
\[
 U(\delta)-(1-k)=2\bigl(W(\delta)-\alpha(\delta)\bigr)>0,
 \qquad \delta_b\leq\delta<\delta_0.
\]

On $[-1,x_0]$, write $D=y(L)$ and $U=y(R)$. There is a unique
$\delta_*\in(\delta_b,1)$ with $y(\delta_*)=-1$, since $y$ increases
strictly from $-2$ to $1-k>-1$. Set $L(-1)=R(-1)=\delta_*$.
By \eqref{eq:uniform-mass-coordinate}, the residual mass and
first-moment equations become
\begin{align}
 R(x)-L(x)&=2(x+1),\label{eq:LR-mass}\\
 A(R(x))-A(L(x))&=x^2-1,\label{eq:LR-moment}
\end{align}
for $-1<x\leq x_0$.

\begin{proposition}[Construction for $1<k<2$]\label{prop:small-k}
For every $x\in(-1,x_0]$, equations
\eqref{eq:LR-mass}--\eqref{eq:LR-moment} have a unique solution satisfying
\[
 \delta_b\leq L(x)<R(x)\leq1.
\]
At $x=x_0$ this solution is $(L,R)=(\delta_0,1)$.  As $x\downarrow-1$
it converges to the prescribed endpoint $L=R=\delta_*$.
Together with \eqref{eq:middle-delta} on $[x_0,b]$ and
\eqref{eq:tail-delta} on $[b,1]$, these maps and their martingale
weights define $\pi_k\in\M_k(\mu,\nu)$. The maps are unique up to
$\mu$-null sets among constructions with the prescribed transport
from $[b,1]$, decreasing $D$ and increasing $U$ on $[-1,b]$, and
\eqref{eq:residual-mass}--\eqref{eq:residual-moment}.
Optimality over $\M_k(\mu,\nu)$ is proved in
\cref{thm:small-optimality}.
\end{proposition}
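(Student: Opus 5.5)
The plan is to reduce \eqref{eq:LR-mass}--\eqref{eq:LR-moment} to a single scalar equation in $L$ and show that its left side is strictly monotone. For fixed $x\in(-1,x_0]$, set $R=L+2(x+1)$ and define
\[
 G_x(L):=A\bigl(L+2(x+1)\bigr)-A(L)-(x^2-1).
\]
Since $A'=y$ and $y$ is strictly increasing, $G_x'(L)=y(L+2(x+1))-y(L)>0$. Hence $G_x$ is strictly increasing on the admissible interval $L\in[\delta_b,\,1-2(x+1)]$. This interval is nonempty because $2(x+1)\leq 1-\delta_0<1-\delta_b$. Uniqueness of the solution is therefore immediate. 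For existence, I check the signs at the two ends.

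At the right end $L=1-2(x+1)$, $R=1$. Using the identity $(1+\delta)^2/4-1=x^2-1$ with $\delta=-2x-1$, I get $G_x=T(-1-2x)$. Since $-1-2x\geq\delta_0$ for $x\leq x_0$, \cref{lem:small-k-transition} gives $G_x\geq0$, with equality exactly at $x=x_0$. This also shows that $(L,R)=(\delta_0,1)$ at $x=x_0$.

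At the left end $L=\delta_b$, I need $A(\delta_b+2(x+1))-A(\delta_b)\leq x^2-1$. The left side is $\int_{\delta_b}^{\delta_b+2(x+1)}y$, the first moment of $\bar\nu$ restricted to $[-2,U]$ with mass $(x+1)/2$. The bottom-most interval of given mass has the smallest first moment. By the strict monotonicity in $s$ shown before \cref{sec:explicit}, the correct interval has moment exactly $(x^2-1)/4$ only if its start is at cumulative mass $s\geq0$. Cleaner still: a feasible interval exists by that same monotonicity argument, since the moment ranges over values including $(x^2-1)/4$ by residual convex order (\cref{lem:residual-convex-order}). Applied to $f(y)=(y-x)_+$ and related test functions, it shows that the bottom interval has moment at most $(x^2-1)/4$. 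I expect this sign check at $L=\delta_b$ to be the main obstacle. I would try first to derive it from the convex-order comparison of $\bar\mu|_{[-1,x]}$ against the lowest portion of $\bar\nu$ of the same mass, a shadow-type argument. If that fails, I would fall back on a direct estimate using strict convexity of $y$, as in the proof of $T(\delta_b)<0$.

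Continuity of $L(x)$ follows from the implicit function theorem, since $\partial_LG_x>0$. As $x\downarrow-1$, the interval length $R-L\to0$ and the mean $(A(R)-A(L))/(R-L)=(x-1)/2\to-1$, so $y(L),y(R)\to-1$ and $L,R\to\delta_*$. Next I would differentiate the two equations to get $L'<0$ and $R'>0$, the left-curtain mass balances. These say $D=y(L)$ decreases and $U=y(R)$ increases.

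The maps then need to be glued to \eqref{eq:middle-delta} at $x_0$ and to \eqref{eq:tail-delta} at $b$, using the endpoint values already computed. On $[x_0,b]$, \cref{lem:small-middle-reduction} gives the marginal equations, and monotonicity of $x(\delta)$ follows by differentiating $\mathcal W$. The constraint $D\geq x-k$ holds on $[-1,b]$ because $D\geq-2$ and equality at $b$ is the only contact, as in \cref{prop:matching}(e). The marginal checks proceed as in \cref{prop:feasible-plan}: the residual equations state precisely that $\bar\pi$ has marginals $\bar\mu,\bar\nu$, and adding the $[b,1]$ transport restores $\mu,\nu$. Uniqueness among the stated constructions is the endpoint uniqueness argument given after \cref{lem:residual-convex-order}.
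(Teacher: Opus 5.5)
Your reduction on $[-1,x_0]$ matches the paper: strict monotonicity of $G_x$ in $L$, the identity $G_x(1-2(x+1))=T(-1-2x)\geq0$, and the limit at $x=-1$. Your observation $D\geq-2\geq x-k$ for $x\leq b$ is a valid and shorter proof of the constraint. However, the step you flag as the main obstacle is left open. Your remarks about the ``correct interval'' and about the moment ``ranging over values including $(x^2-1)/4$'' assume the existence that $G_x(\delta_b)\leq0$ is supposed to deliver. The test function $(y-x)_+$ does not give that sign, since $x$ is not the point below which $\bar\nu$ has mass $(x+1)/2$. Your convex-order idea does work if you use that quantile as the strike. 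Set $\Delta=2(x+1)$, $m=(x+1)/2$ and $K=y(\delta_b+\Delta)$. Then \eqref{eq:uniform-mass-coordinate} gives $\bar\nu([-2,K])=m$, and \cref{lem:residual-convex-order} applied to $(K-y)_+$ yields
\[
 mK-\frac{x^2-1}{4}\leq\int(K-y)_+\dd\bar\mu
 \leq\int(K-y)_+\dd\bar\nu
 =mK-\frac{A(\delta_b+\Delta)-A(\delta_b)}{4}.
\]
This is $G_x(\delta_b)\leq0$, which suffices for the intermediate value theorem. The paper argues differently: $g_b(\Delta)=G_\Delta(\delta_b)$ is convex in $\Delta$, vanishes at $0$, and satisfies $g_b(\Delta_0)<G_{\Delta_0}(\delta_0)=T(\delta_0)=0$, which gives strict negativity on $(0,\Delta_0]$.

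The second gap is the ordering $D<x<U$. You never prove it, yet every sign you invoke depends on it. Differentiating \eqref{eq:LR-mass}--\eqref{eq:LR-moment} gives only $L'=-2(U-x)/(U-D)$ and $R'=2(x-D)/(U-D)$. The paper obtains the ordering on $[-1,x_0]$ from the interval mean $\bar m=(x-1)/2$, which gives $D<\bar m<x$, and from $y'>1$, which gives $U-\bar m>\Delta/2>\Delta/4=x-\bar m$. On $[x_0,b]$, differentiating $\mathcal W$ gives $x_\delta=-(2W+C)/(2W)$ and $U_x=2(W+C)/(2W+C)$, with $W+C=x-D$. Here $C=2k-2-\eta$ is positive at $\delta_b$ but can be negative near $\delta_0$ when $k$ is close to $1$, so neither sign follows from differentiation alone. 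A short repair: $2W+C=U-D>0$ because $U\geq1-k>D$ on $[\delta_b,\delta_0]$, hence $x_\delta<0$. Together with $D_\delta>0$, this makes $x-D$ decrease in $\delta$, so $x-D\geq x_0-y(\delta_0)>0$ by the ordering at $x_0$. The paper instead shows $J=W+C>0$ from its endpoint values and the sign of $J'$ at any zero. Without this ordering you have neither $q\in[0,1]$ nor the monotonicity of $U$ on $[x_0,b]$, and your marginal check relies on both.
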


\begin{proof}
\emph{Existence on $[-1,x_0]$.}
Put $\Delta=2(x+1)$ and $R=L+\Delta$. The first-moment equation is
$G_\Delta(L)=0$, where
\[
 G_\Delta(L):=A(L+\Delta)-A(L)
 -\left(\frac{\Delta^2}{4}-\Delta\right),
 \qquad \delta_b\leq L\leq1-\Delta.
\]
Its derivative is
\[
 \partial_LG_\Delta(L)=y(L+\Delta)-y(L)>0,
\]
so there is at most one root. Let $\Delta_0=1-\delta_0$. At the lower
endpoint, $g_b(\Delta):=G_\Delta(\delta_b)$ satisfies
\[
 g_b(0)=0,\qquad
 g_b''(\Delta)=y'(\delta_b+\Delta)-\frac12>0,
 \qquad g_b(\Delta_0)<G_{\Delta_0}(\delta_0)=0.
\]
Convexity therefore gives $g_b(\Delta)<0$ for
$0<\Delta\leq\Delta_0$. At the upper endpoint,
\[
 g_1(\Delta):=G_\Delta(1-\Delta),\qquad
 g_1(0)=g_1(\Delta_0)=0,\qquad
 g_1''(\Delta)=-y'(1-\Delta)-\frac12<0.
\]
Thus $g_1(\Delta)>0$ for $0<\Delta<\Delta_0$. The intermediate value
theorem proves existence, and at $x=x_0$ the root is
$(L,R)=(\delta_0,1)$.

As $x\downarrow-1$, division by $\Delta$ gives
\[
 \frac1\Delta\int_L^{L+\Delta}y(s)\dd s
 =\frac\Delta4-1.
\]
Since $R-L=\Delta\to0$ and $y$ is uniformly continuous on
$[\delta_b,1]$, the left-hand side differs from $y(L)$ by a quantity
tending to zero. Thus $y(L)\to-1$. The continuous inverse of $y$ gives
$L\to\delta_*$ and then $R\to\delta_*$.

\medskip\noindent\emph{Ordering and monotonicity on $[-1,x_0]$.}
Divide
\eqref{eq:LR-moment} by $\Delta$ to obtain
\[
 m:=\frac1\Delta\int_L^R y(s)\dd s=\frac{x-1}{2}.
\]
Since $y$ is strictly increasing, $D<m<U$, and $m<x$ gives $D<x$.
Also $y$ is convex and $y'>1$, so the Hermite--Hadamard inequality
gives
\[
 U-m\geq\frac{U-D}{2}>\frac{R-L}{2}=\frac\Delta2.
\]
But $x-m=\Delta/4$, hence $U>x$. For $x\in(-1,x_0)$, the root lies in
$(\delta_b,1-\Delta)$ and $\partial_LG_\Delta(L)>0$. The implicit
function theorem therefore permits differentiation, which gives
\begin{equation}\label{eq:LR-derivatives}
 L'=-2\frac{U-x}{U-D}<0,
 \qquad
 R'=2\frac{x-D}{U-D}>0.
\end{equation}
Hence $D$ decreases and $U$ increases. At $x=x_0$, their values
$D=y(\delta_0)$ and $U=1-k$ agree with \eqref{eq:middle-delta}.

\medskip\noindent\emph{Ordering and monotonicity on $[x_0,b]$.}
Put
\[
 \eta(\delta):=z(\delta)+\delta,
 \qquad C(\delta):=2k-2-\eta(\delta).
\]
Direct differentiation and \eqref{eq:middle-delta} give
\begin{gather}
 \mathcal W'=-C,\qquad W'=-\frac{C}{2W},\notag\\
 x-D=W+C,\qquad U-D=2W+C.\label{eq:middle-gaps}
\end{gather}
Let $J=W+C=x-D$. At $\delta_b$, $J=k>0$; at $\delta_0$,
the ordering at $x_0$ gives $J=x_0-y(\delta_0)>0$.
At any zero of $J$ one would have $C=-W$ and
\[
 J'=\frac12-\frac{2k}{k-\delta}<0.
\]
Every zero would have negative derivative, which is incompatible
with positivity at both endpoints. Thus
$W+C>0$ and $2W+C>0$ throughout. In particular,
\begin{equation*}
 x'=-\frac{2W+C}{2W}<0,\qquad
 D_\delta=\frac{k+\delta}{k-\delta}>0,\qquad
 U_\delta=-\frac{W+C}{W}<0.
\end{equation*}
Thus $\delta\mapsto x(\delta)$ is a strictly decreasing bijection from
$[\delta_b,\delta_0]$ onto $[x_0,b]$; in particular, $x_0<b$.
Equations
\eqref{eq:middle-gaps} and $U-x=W>0$ show that $D<x<U$.
Consequently $D$ decreases and $U$ increases as functions of $x$, with
\begin{equation*}
 q=\frac{x-D}{U-D}=\frac{W+C}{2W+C}\in(0,1),
 \qquad U_x=\frac{2(W+C)}{2W+C}=2q.
\end{equation*}

\medskip\noindent\emph{The constraint $D(x)\geq x-k$.}
On $[x_0,b]$, let $g=D-x+k$.
Then $g(\delta_b)=0$ and
\[
 g_\delta=\frac{k+\delta}{k-\delta}
          +\frac{2W+C}{2W}>0.
\]
Thus $D>x-k$ on $[x_0,b)$. On $[-1,x_0]$,
$g_x=D_x-1<0$ by \eqref{eq:LR-derivatives}, so $g\geq g(x_0)>0$.
On $[b,1]$, $D=x-k$ by definition.

\medskip\noindent\emph{The target marginal.}
For $x\in(x_0,b)$,
\[
 D_x=-\frac{2W(k+\delta)}{(k-\delta)(2W+C)},
 \qquad
 \frac{(1-q)/2}{-D_x}
   =\frac{k-\delta}{4(k+\delta)}=\bar r(D).
\]
Also $U>1-k$ and $U_x=2q$, so the upper-map target density is
$(q/2)/U_x=1/4=\bar r(U)$. For $x\in(-1,x_0)$,
\eqref{eq:LR-derivatives} gives $R'=2q$ and $-L'=2(1-q)$, hence
\[
 \frac{q/2}{U_x}=\frac1{4y'(R)}=\bar r(U),
 \qquad
 \frac{(1-q)/2}{-D_x}=\frac1{4y'(L)}=\bar r(D).
\]
The images partition $[-2,b+\delta_b]$ up to endpoints:
\begin{center}
\begin{tabular}{@{}ll@{}}
\toprule
Map and source interval & Target image\\
\midrule
$D|_{[-1,x_0]}$ & $[y(\delta_0),-1]$\\
$U|_{[-1,x_0]}$ & $[-1,1-k]$\\
$D|_{[x_0,b]}$ & $[-2,y(\delta_0)]$\\
$U|_{[x_0,b]}$ & $[1-k,b+\delta_b]$\\
\bottomrule
\end{tabular}
\end{center}
The map $U|_{[b,1]}$ gives density $1/4$ on $[b+\delta_b,2]$.
For $x=z(\delta)\in[b,1]$, transport to $D(x)=x-k=y(\delta)$
contributes $\delta/[2(k+\delta)]$; its sum with
$\bar r(y(\delta))$ is $1/4$.

The maps agree at $x_0,b$ and are continuous. Set
\[
 q(x):=\frac{x-D(x)}{U(x)-D(x)}\quad(x>-1),
 \qquad q(-1):=\frac12.
\]
Then
\begin{equation*}
 \pi_k(\dd x,\dd y)
 =\frac12\1_{[-1,1]}(x)\dd x
  \bigl[(1-q(x))\delta_{D(x)}(\dd y)
       +q(x)\delta_{U(x)}(\dd y)\bigr]
\end{equation*}
has conditional mean $x$, is supported on $y\geq x-k$, and has second
marginal $\nu$. Thus $\pi_k\in\M_k(\mu,\nu)$. Uniqueness within
the stated class follows from the interval-moment argument after
\eqref{eq:residual-moment}, together with uniqueness for
\eqref{eq:tail-ode}.
\end{proof}

\begin{remark}[The upper displacement]
For $k$ sufficiently close to $1$, $U(x)-x$ decreases on a subinterval
of $(x_0,b)$, although $U$ increases on $[-1,1]$. The proof in
\cref{sec:small-optimality} therefore uses monotonicity of $U$ without
assuming monotonicity of $U-x$.
\end{remark}

\section{Optimality}\label{sec:optimality-limitations}

For $x\in[-1,1]$, define the admissible conditional laws by
\[
 \mathcal G_k(x):=\left\{m\in\Pp(I_k(x)):
                 \int y\,m(\dd y)=x\right\},
 \qquad I_k(x)=[-2,2]\cap[x-k,\infty).
\]
For $\psi\in C([-2,2])$, set
\[
 R_{k,h}\psi(x):=\inf_{m\in\mathcal G_k(x)}
                  \int [h(y-x)-\psi(y)]\,m(\dd y).
\]
The constrained duality theorem
\cite[Theorem~5.1 and Corollary~5.1]{BayraktarZhangZhou} gives
\[
 V_k(h)=\sup_{\psi\in C([-2,2])}
          \left\{\int R_{k,h}\psi(x)\mu(\dd x)
                        +\int\psi(y)\nu(\dd y)\right\}.
\]
The state intervals are compact, the martingale and support constraints
are closed and convex, and the cost is continuous. Adding a constant to
$h$ makes it nonnegative, as required by that theorem, without changing
the minimizers. The theorem does not assert dual attainment.

For smooth costs, we prove optimality by constructing a continuous
$\psi$ for which
\begin{equation}\label{eq:conditional-optimality}
 \pi_{k,x}\in\operatorname*{arg\,min}_{m\in\mathcal G_k(x)}
           \int [h(y-x)-\psi(y)]\,m(\dd y)
 \qquad\text{for }\mu\text{-almost every }x,
\end{equation}
where $\pi_{k,x}=(1-q(x))\delta_{D(x)}+q(x)\delta_{U(x)}$.
More precisely, we find bounded functions $\phi,\theta$ such that
\[
 G_x(y):=h(y-x)-\phi(x)-\psi(y)-\theta(x)(y-x)\geq0,
 \qquad x\in[-1,1],\quad y\in I_k(x),
\]
with equality at $D(x)$ and $U(x)$. For any $m\in\mathcal G_k(x)$,
integration eliminates the term $\theta(x)(y-x)$ and gives
$\int[h(y-x)-\psi(y)]\,m(\dd y)\geq\phi(x)$.
The law $\pi_{k,x}$ attains equality, so
$R_{k,h}\psi(x)=\phi(x)$ and \eqref{eq:conditional-optimality} holds.
Integrating against the source marginal then yields, for every
$\pi\in\M_k(\mu,\nu)$,
\[
 \int h(y-x)\dd\pi\geq\mu(\phi)+\nu(\psi)
 =\int h(y-x)\dd\pi_k.
\]
Uniform approximation extends the result to $C^1$ costs with convex
derivative. We first treat $2\leq k<3$; the additional estimates for
$1<k<2$ are given in \cref{sec:small-optimality}.

\subsection{Optimality for
\texorpdfstring{$2\leq k<3$}{2 <= k < 3}}
\label{sec:duality}

We prove optimality for $2\leq k<3$ by constructing dual functions
such that $G_x\geq0$ on $I_k(x)$, with equality at $D(x)$ and $U(x)$,
for every $x\in[-1,1]$.

Fix $2\leq k<3$ and set
\begin{equation*}
 d(x):=D(x)-x,\qquad w(x):=U(x)-x,\qquad B(x):=x-k.
\end{equation*}
By \cref{prop:matching}, $D:[-1,b]\to[-2,-1]$ is strictly decreasing,
$U:[-1,1]\to[-1,2]$ is strictly increasing,
$-k\leq d\leq0$, and $w\geq0$ is nondecreasing.
For $x\in[b,1]$, the constraint boundary $B(x)$ belongs to
$D([-1,b])$. Define
\begin{equation*}
 \alpha(x):=D^{-1}(B(x)),\qquad
 \zeta(x):=B(x)-\alpha(x)=d(\alpha(x)),
\end{equation*}
where $D^{-1}$ always denotes the inverse of $D|_{[-1,b]}$.
Thus $\alpha(x)\leq b\leq x$, $\alpha(b)=b$,
$\alpha(1)=2k-5$, $-k\leq\zeta\leq0$, and
\begin{equation}\label{eq:x-a}
 x-\alpha(x)=\zeta(x)+k.
\end{equation}

\medskip\noindent\emph{Auxiliary probability measures.}

We construct probability measures $Q_x$ on the displacement variable.
They will determine the coefficient
$S_h(x)=\int h''\dd Q_x$ in the dual functions.
For $-1<x\leq b$, set
\begin{equation}\label{eq:Q-free}
 Q_x:=\Unif[d(x),w(x)],
\end{equation}
and let $Q_{-1}:=\delta_0$. For $b\leq x\leq1$, define $Q_x$ by
\begin{equation}\label{eq:Q-active}
 Q_x:=
 \frac{\lambda_{[\zeta(x),w(x)]}
       +\displaystyle\int_{\alpha(x)}^xQ_s\dd s}
      {w(x)+k}.
\end{equation}
Here $\lambda_{[r,s]}$ is Lebesgue measure restricted to $[r,s]$,
and the integral of measures is defined weakly. At $x=b$, this formula
agrees with \eqref{eq:Q-free}.

To prove existence and uniqueness, let $\mathcal X$ be the Banach space
of bounded weakly measurable kernels of signed measures on $[-k,1]$,
indexed by $x\in[b,1]$, with the supremum total-variation norm. Using the known measures on $[-1,b]$,
define
\[
 (\mathcal TK)_x:=\frac{1}{w(x)+k}\left(
 \lambda_{[\zeta(x),w(x)]}
 +\int_{\alpha(x)}^b Q_s\dd s+\int_b^xK_s\dd s\right).
\]
The endpoint maps are continuous, so $\mathcal T$ preserves
measurability. Since $w+k\geq k$,
\[
 \|(\mathcal TQ)_x-(\mathcal TR)_x\|_{\mathrm{TV}}
 \leq\frac1k\int_b^x\|Q_s-R_s\|_{\mathrm{TV}}\dd s,
\]
and iteration gives
\[
 \|\mathcal T^nQ-\mathcal T^nR\|_{\mathcal X}
 \leq\frac{(1-b)^n}{k^n n!}\|Q-R\|_{\mathcal X}.
\]
Picard iteration from zero therefore converges to a unique fixed point,
which is nonnegative. Its total mass satisfies the scalar Volterra
equation obtained from \eqref{eq:Q-active}. By \eqref{eq:x-a},
\[
 (w(x)-\zeta(x))+(x-\alpha(x))=w(x)+k,
\]
so the constant function $1$ solves this equation. Uniqueness proves
$Q_x(\mathbb R)=1$. Moreover,
\begin{equation}\label{eq:Q-support}
 \supp Q_x\subset[-k,w(x)].
\end{equation}
Indeed, this holds for the measures in \eqref{eq:Q-free}; in
\eqref{eq:Q-active}, $\zeta(x)\geq-k$ and $w(s)\leq w(x)$ for
$s\leq x$. Thus every Picard iterate has the asserted support, and so
does its limit.

\begin{lemma}[A stochastic lower bound]\label{lem:stochastic-order}
For every $x\in[-1,1]$,
\begin{equation}\label{eq:stochastic-order}
 \Unif[-k,w(x)]\leq_{\mathrm{st}}Q_x.
\end{equation}
Here $R\leq_{\mathrm{st}}Q$ means
$\int f\dd R\leq\int f\dd Q$ for every bounded increasing Borel
function $f$.
\end{lemma}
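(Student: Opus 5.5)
The plan is to work with distribution functions. Write $F_x(t):=Q_x((-\infty,t])$. By \eqref{eq:Q-support}, $Q_x$ is a probability measure on $[-k,w(x)]$. So \eqref{eq:stochastic-order} is equivalent to the pointwise bound
\[
 F_x(t)\leq\frac{t+k}{w(x)+k},\qquad -k\leq t\leq w(x).
\]
Indeed, $R\leq_{\mathrm{st}}Q$ on the line is the same as $F_Q\leq F_R$ pointwise. For $t<-k$ both distribution functions vanish, and for $t\geq w(x)$ both equal $1$. It is slightly more convenient to prove the equivalent statement
\[
 F_x(t)\leq\min\Bigl\{1,\frac{(t+k)_+}{w(x)+k}\Bigr\}\quad\text{for all }t\in\mathbb R .
\]

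\emph{Free range $-1\leq x\leq b$.} For $x=-1$ we have $w(-1)=0$ and $Q_{-1}=\delta_0$, which dominates $\Unif[-k,0]$. For $-1<x\leq b$ we have $Q_x=\Unif[d(x),w(x)]$ with $d(x)\geq-k$. Raising the left endpoint of a uniform law while keeping the right endpoint fixed increases it stochastically. Concretely, $(t-d)/(w-d)\leq(t+k)/(w+k)$ for $d\leq t\leq w$, since both sides are affine in $t$ on this interval, they agree at $t=w$, and at $t=d$ the left side is $0$. This settles $x\leq b$.

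\emph{Active range $b\leq x\leq1$.} I would pass to distribution functions in \eqref{eq:Q-active}:
\[
 (w(x)+k)F_x(t)=\min\{(t-\zeta(x))_+,\,w(x)-\zeta(x)\}+\int_{\alpha(x)}^xF_s(t)\dd s .
\]
The main step is to show that the bound is preserved by this identity. Suppose $F_s(t)\leq(t+k)/(w(s)+k)$ for all $s\in[\alpha(x),x)$, and fix $t\in[-k,w(x)]$. There are two cases.
\begin{itemize}[leftmargin=2em]
\item If $t\geq\zeta(x)$, use only $F_s\leq1$. Together with \eqref{eq:x-a}, the right-hand side is at most $(t-\zeta)+(x-\alpha)=t-\zeta+\zeta+k=t+k$.
\item If $t<\zeta(x)$, the first term vanishes. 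Using $w(s)\geq0$ in the hypothesis,
\[
 \int_{\alpha}^xF_s(t)\dd s\leq(t+k)\int_\alpha^x\frac{\dd s}{w(s)+k}\leq(t+k)\frac{x-\alpha}{k}=(t+k)\frac{\zeta+k}{k}\leq t+k,
\]
because $\zeta\leq0$.
\end{itemize}
In both cases $(w(x)+k)F_x(t)\leq t+k$, which is the claim at $x$.

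The obstacle is making this ``induction in $x$'' rigorous. The integral involves both $s\in[\alpha(x),b]$, where the bound is already known, and $s\in[b,x]$, where it is what we are proving. I would run it through the Picard iteration of $\mathcal T$ used in the existence proof. Let $\mathcal C$ be the set of nonnegative kernels $K$ on $[b,1]$ with $\supp K_x\subset[-k,w(x)]$ and $K_x((-\infty,t])\leq\min\{1,(t+k)_+/(w(x)+k)\}$ for all $t$. The computation above shows $\mathcal T(\mathcal C)\subset\mathcal C$, using the free-range bound for $s\leq b$. For the case $t\geq\zeta$ it needs each $K_s$ to have mass at most $1$, which is part of the bound at $t=w(s)$. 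The zero kernel lies in $\mathcal C$. The set $\mathcal C$ is closed under total-variation convergence uniformly in $x$, since the defining inequalities are evaluations on fixed half-lines. Hence the Picard limit, which is $(Q_x)_{x\in[b,1]}$, lies in $\mathcal C$. This yields the stated stochastic order for every $x\in[b,1]$ and completes the proof.
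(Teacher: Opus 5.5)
Your proof is correct. Its core inequality is the paper's: both arguments test the Volterra identity \eqref{eq:Q-active} against an increasing function and use only $x-\alpha(x)=\zeta(x)+k$, $\zeta\le 0\le w(s)$, and the free-range bound on $[\alpha(x),b]$. Your cases $t\ge\zeta$ (where you use $F_s\le 1$) and $t<\zeta$ (where you use $w(s)\ge0$) are exactly the two regimes of the paper's term $u_f(\zeta)$ when $f=\1_{(t,\infty)}$.

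The real difference is how the induction in $x$ is closed.
\begin{itemize}
\item The paper works directly on the fixed point. For each bounded increasing $f$ it derives $(w+k)\bigl(q_f(x)-u_f(w)\bigr)=\int_{\alpha(x)}^x\bigl(q_f(s)-u_f(\zeta)\bigr)\dd s$ and applies Gronwall to the deficit $e=[u_f(w)-q_f]_+$. This avoids both a case split and the reduction to distribution functions. It does need $\zeta(x)>-k$ for $x>b$, so that $u_f(\zeta)$ is defined.
\item You show that a total-variation-closed set of kernels is invariant under $\mathcal T$, and then pass to the limit of the Picard iterates from zero. This needs no Gronwall step and no strictness $\zeta>-k$. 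It relies instead on the already established convergence of the iteration and on the standard equivalence between $\le_{\mathrm{st}}$ and pointwise ordering of distribution functions.
\end{itemize}
Your mechanism is the one the paper itself uses later, in \cref{lem:small-kernel}, to obtain the reverse bound $Q_x\le_{\mathrm{st}}\Unif[0,w(x)]$.

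One detail you should state explicitly: $\mathcal T$ preserves the support condition in $\mathcal C$. This holds because $\zeta\ge -k$ and $w(s)\le w(x)$ for $s\le x$, exactly as in the derivation of \eqref{eq:Q-support}. You need it to handle $t<-k$ and $t>w(x)$ through the bound at $t=w(x)$.
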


\begin{proof}
For $x\leq b$, the assertion follows from $d(x)\geq-k$, with equality
of the measures at $b$. Fix a bounded increasing $f$ and write
\[
 q_f(x):=\int f\dd Q_x,\qquad
 u_f(t):=\frac1{t+k}\int_{-k}^t f(r)\dd r,\quad t>-k.
\]
The function $u_f$ is increasing. If $x>b$, then $\zeta(x)>-k$,
because $D(s)>s-k$ for $s<b$. With $w=w(x)$ and
$\zeta=\zeta(x)$, \eqref{eq:Q-active} and \eqref{eq:x-a} give
\begin{align*}
 (w+k)\bigl(q_f(x)-u_f(w)\bigr)
 &=\int_{\alpha(x)}^xq_f(s)\dd s-\int_{-k}^{\zeta}f(r)\dd r\\
 &=\int_{\alpha(x)}^x\bigl(q_f(s)-u_f(\zeta)\bigr)\dd s.
\end{align*}
Since $w(s)\geq0\geq\zeta$, we have
$u_f(w(s))\geq u_f(\zeta)$. Put
$e(s):=[u_f(w(s))-q_f(s)]_+$. The integrand is nonnegative for
$s\leq b$ and is at least $-e(s)$ for $s\geq b$. Hence
\[
 e(x)\leq\frac1k\int_b^xe(s)\dd s.
\]
Gronwall's lemma gives $e=0$, proving \eqref{eq:stochastic-order}.
\end{proof}

\medskip\noindent\emph{Dual functions and equality at the support points.}

For $h\in C^3([-3,3])$ with $h'''\geq0$, set
\begin{equation}\label{eq:S-theta}
 S_h(x):=\int h''(r)Q_x(\dd r),\qquad
 \theta(x):=-\int_{-1}^xS_h(s)\dd s.
\end{equation}
Since $S_h$ is bounded and measurable, $\theta$ is Lipschitz.
Define $\psi(-1)=0$ and
\begin{align}
 \psi'(y)
 &=h'\bigl(y-D^{-1}(y)\bigr)-\theta(D^{-1}(y)),
 &&-2\leq y\leq-1,\label{eq:psi-lower}\\
 \psi'(y)
 &=h'\bigl(y-U^{-1}(y)\bigr)-\theta(U^{-1}(y)),
 &&-1\leq y\leq2.\label{eq:psi-upper}
\end{align}
The inverse maps are continuous, and both formulas equal $h'(0)$ at
$y=-1$. Thus $\psi\in C^1([-2,2])$. Set
\begin{equation}\label{eq:phi-def}
 \phi(x):=h(w(x))-\psi(U(x))-\theta(x)w(x),
\end{equation}
and write
\begin{equation*}
 J(x,y):=h(y-x)-\psi(y)-\theta(x)(y-x),
 \qquad G_x(y)=J(x,y)-\phi(x).
\end{equation*}
All three functions $\phi,\psi,\theta$ are continuous on compact
intervals and hence bounded.

\begin{lemma}[Equality at the support points]\label{lem:contacts}
The functions above satisfy
\begin{align}
 J(x,D(x))&=J(x,U(x))=\phi(x),&&-1\leq x\leq b,
 \label{eq:contact-free}\\
 J(x,B(x))&=J(x,U(x))=\phi(x),&&b\leq x\leq1.
 \notag
\end{align}
\end{lemma}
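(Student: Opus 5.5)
The identity $J(x,U(x))=\phi(x)$ holds by the definition \eqref{eq:phi-def}, so the content of the lemma is that the lower contact value equals the upper one. I will set $\Delta(x):=J(x,U(x))-J(x,D(x))$ on $[-1,b]$ and $\Delta(x):=J(x,U(x))-J(x,B(x))$ on $[b,1]$. I will show that $\Delta$ is continuous, vanishes at an initial point, and has zero derivative wherever the maps are differentiable. The basic computation uses
\[
 \partial_xJ(x,y)=-h'(y-x)+S_h(x)(y-x)+\theta(x),\qquad
 \partial_yJ(x,y)=h'(y-x)-\psi'(y)-\theta(x),
\]
which follows from $\theta'=-S_h$ almost everywhere. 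By \eqref{eq:psi-upper}, $\partial_yJ(x,U(x))=0$. By \eqref{eq:psi-lower}, $\partial_yJ(x,D(x))=0$ for $x\le b$.

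\emph{The free interval $[-1,b]$.} At $x=-1$ we have $D=U=-1$, so $\Delta(-1)=0$. Away from $-1$ (and away from the parametrization singularity at $k=2$), $D$ and $U$ are $C^1$ on each of $[-1,a]$ and $[a,b]$. The vanishing $y$-derivatives then give
\[
 \Delta'(x)=h'(d)-h'(w)+S_h(x)(w-d)
 =(w-d)\Bigl[S_h(x)-\tfrac{1}{w-d}\int_d^w h''(r)\dd r\Bigr]=0 .
\]
The last equality holds because $Q_x=\Unif[d(x),w(x)]$ by \eqref{eq:Q-free}. I will integrate this on compact subintervals $[x_1,x_2]\subset(-1,b]$ and let $x_1\downarrow-1$, using continuity of $J$, $D$ and $U$. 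This gives $\Delta\equiv0$ on $[-1,b]$.

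\emph{The active interval $[b,1]$.} Here $B(x)=D(\alpha(x))$, so \eqref{eq:psi-lower} gives $\psi'(B(x))=h'(\zeta(x))-\theta(\alpha(x))$. Since $B'=1$,
\[
 \tfrac{\dd}{\dd x}J(x,B(x))=\partial_xJ+\partial_yJ
 =-kS_h(x)-\psi'(B(x))=-kS_h(x)-h'(\zeta)+\theta(\alpha).
\]
Combining this with $\tfrac{\dd}{\dd x}J(x,U(x))=-h'(w)+S_h(x)w+\theta(x)$ and $\theta(x)-\theta(\alpha)=-\int_\alpha^xS_h(s)\dd s$ yields
\[
 \Delta'(x)=(w+k)S_h(x)-\bigl(h'(w)-h'(\zeta)\bigr)-\int_{\alpha(x)}^xS_h(s)\dd s .
\]
Integrating $h''$ against \eqref{eq:Q-active} gives exactly $(w+k)S_h(x)=h'(w)-h'(\zeta)+\int_\alpha^xS_h(s)\dd s$, so $\Delta'=0$. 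At $x=b$ we have $B(b)=-2=D(b)$, so $\Delta(b)=0$ by the first part. Hence $\Delta\equiv0$ on $[b,1]$.

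\emph{Main obstacle: regularity.} I must justify the chain rule almost everywhere and the absolute continuity of $\Delta$. $\theta$ is Lipschitz, and $U$ is $C^1$ on $[b,1]$ by the terminal-value lemma. $\alpha=D^{-1}\circ B$ is $C^1$ wherever $D'\ne0$; on $(a,b)$ this holds because $D'=-2(2k-E)R/(E(2R+C))<0$. The possible degeneracies at $x=b$ (where $R$-type denominators are positive) and at $x=-1$ will be handled by working on compact subintervals and passing to the limit by continuity. Finally, $\psi\in C^1$ ensures that $J$ is $C^1$ in $y$.
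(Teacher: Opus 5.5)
Your proposal is correct and follows the paper's proof. The upper contact holds by the definition of $\phi$. The lower (respectively boundary) contact follows because the difference has zero derivative almost everywhere: by the chord-slope formula for $S_h$ on $[-1,b]$, and by integrating $h''$ against \eqref{eq:Q-active} on $[b,1]$, with the constants fixed at $x=-1$ and $x=b$.

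One small simplification: you do not need any differentiability of $\alpha$. Indeed, $J(x,B(x))=h(-k)-\psi(x-k)+k\theta(x)$ depends on $\alpha$ only through the value $\psi'(B(x))=h'(\zeta(x))-\theta(\alpha(x))$.
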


\begin{proof}
On each open interval where the transport maps are smooth, the contact
differences below are locally absolutely continuous. We show that their
derivatives vanish almost everywhere, so each difference is constant
on that interval. Continuity matches these constants across the
transition points and at $x=-1$.
For $-1<x<b$, \eqref{eq:Q-free} gives
\begin{equation*}
 S_h(x)=\frac{h'(w(x))-h'(d(x))}{w(x)-d(x)}.
\end{equation*}
Equations \eqref{eq:psi-lower}--\eqref{eq:psi-upper} imply
$J_y(x,D(x))=J_y(x,U(x))=0$. Consequently,
\[
 \frac{\dd}{\dd x}[J(x,U(x))-J(x,D(x))]
 =-[h'(w)-h'(d)]+S_h(w-d)=0.
\]
The difference is zero at $x=-1$, proving \eqref{eq:contact-free}.

For $b<x<1$, \eqref{eq:Q-active} and \eqref{eq:S-theta} give
\begin{align}
 (w+k)S_h
 &=h'(w)-h'(\zeta)+\int_{\alpha(x)}^xS_h(s)\dd s\notag\\
 &=h'(w)-h'(\zeta)+\theta(\alpha(x))-\theta(x).
 \label{eq:S-active}
\end{align}
Also $\psi'(B(x))=h'(\zeta(x))-\theta(\alpha(x))$. Hence
\[
 \frac{\dd}{\dd x}[J(x,U(x))-J(x,B(x))]
 =-h'(w)+\theta(x)+(w+k)S_h+\psi'(B(x))=0.
\]
At $x=b$, $B(b)=D(b)$, so \eqref{eq:contact-free} proves the equality
on $[b,1]$.
\end{proof}

Since $h''$ is increasing, \cref{lem:stochastic-order} and
\eqref{eq:Q-support} imply
\begin{equation*}
 \frac{h'(w)-h'(-k)}{w+k}\leq S_h(x)\leq h''(w).
\end{equation*}
In particular, for $b\leq x\leq1$, \eqref{eq:S-active} gives
\begin{align}
 J_y(x,B(x)+)
 &=h'(-k)-\psi'(B(x))-\theta(x)\notag\\
 &=(w+k)S_h-\bigl(h'(w)-h'(-k)\bigr)\geq0.
 \label{eq:boundary-sign}
\end{align}

\medskip\noindent\emph{The dual inequality for truncated quadratic costs.}

\begin{lemma}\label{lem:hinge}
Let $h_\tau(r)=\frac12(r-\tau)_+^2$. The dual functions constructed
below satisfy
\begin{equation}\label{eq:hinge-dual}
 \phi_\tau(x)+\psi_\tau(y)+\theta_\tau(x)(y-x)
 \leq h_\tau(y-x),
\end{equation}
for $x\in[-1,1]$ and $y\in I_k(x)$, with equality at $D(x)$ and
$U(x)$.
\end{lemma}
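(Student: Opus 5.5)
We take $h=h_\tau$, which is $C^1$ with $h_\tau''=\1_{(\tau,\infty)}$ (so $h_\tau'$ is convex), and build $\phi_\tau,\psi_\tau,\theta_\tau$ by \eqref{eq:S-theta}--\eqref{eq:phi-def}. The formulas only need $h''$ to be bounded and monotone, so $S_\tau(x)=Q_x((\tau,\infty))$. The proofs of \cref{lem:contacts} and of \eqref{eq:boundary-sign} use only $h'$ absolutely continuous with increasing $h''$, so both carry over. Hence equality at $D(x)$ and $U(x)$ (at $B(x)$ on $[b,1]$) holds, and $J_y(x,B(x)+)\geq0$ on $[b,1]$. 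The remaining task is the inequality $G_x\geq0$ on $I_k(x)$.

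\emph{Step 1: derivative in $y$.} Let $x'=D^{-1}(y)$ for $y\leq-1$ and $x'=U^{-1}(y)$ for $y\geq-1$. By \eqref{eq:psi-lower}--\eqref{eq:psi-upper},
\[
 \partial_yG_x(y)=h_\tau'(y-x)-h_\tau'(y-x')+\theta(x')-\theta(x)
 =\int_{x'}^{x}\bigl[S_\tau(s)-h_\tau''(y-s)\bigr]\dd s .
\]
We show that $G_x$ decreases on $[\max(B(x),-2),D(x)]$, increases on $[D(x),-1]$, decreases on $[-1,U(x)]$ and increases beyond $U(x)$. In the active regime, with $B(x)$ in place of $D(x)$, the boundary sign \eqref{eq:boundary-sign} controls the left end. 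Together with the equalities at $D$ and $U$, this gives $G_x\geq0$.

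\emph{Step 2: the comparison in the target variable.} Fix $y$ and view the integrand as a function of $s$. The function $s\mapsto h_\tau''(y-s)=\1_{\{s<y-\tau\}}$ is decreasing in $s$. The function $S_\tau(s)=Q_s((\tau,\infty))$ is a tail probability. By \cref{lem:stochastic-order} it is bounded below by $(w(s)-\tau)_+/(w(s)+k)$, and by \eqref{eq:Q-support} it vanishes when $w(s)\leq\tau$. Since $w$ is nondecreasing and the supports move right, the sign of $\int_{x'}^x$ follows from a single-crossing argument.

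On $[-1,b]$, $Q_s=\Unif[d(s),w(s)]$. For $y=U(x')$ with $x'<x$, we compare $S_\tau(s)$ with the indicator on $[x',x]$. For $s$ near $x'$, the quantity $y-s$ is close to $w(x')$, and $S_\tau(s)$ averages $h''$ over $[d(s),w(s)]$ with $w(s)\geq w(x')$. This should give a one-sign integral, with analogous bounds for $x'>x$ and for the lower branch, where $d$ is decreasing.

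\emph{Step 3: the active regime and the main obstacle.} For $x\in[b,1]$, $Q_x$ is no longer uniform; it is given by the Volterra equation \eqref{eq:Q-active}. I would first prove, by the same Gronwall argument as in \cref{lem:stochastic-order}, monotonicity of $s\mapsto Q_s((\tau,\infty))$ relative to the needed indicator. More concretely, I would show that $s\mapsto S_\tau(s)$ is nondecreasing on $[-1,1]$, and then use this single-crossing property.

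The main obstacle is the range $y\in[B(x),D(x)]$ when $x\in[b,1]$, because there $x'=D^{-1}(y)\leq\alpha(x)$ lies far from $x$. For this case I would write $\partial_yG_x(y)-\partial_yG_x(B(x)+)$ as an integral over $[x',\alpha(x)]$ and use \eqref{eq:S-active} to rewrite $\theta(\alpha(x))-\theta(x)$. The sign then reduces to the stochastic bound combined with \eqref{eq:boundary-sign}.
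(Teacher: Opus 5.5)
There is a genuine gap at the core of the argument: nonnegativity of $G_x$ between its two contact points. The shape you announce in Step~1 is false, because the turning point of $G_x$ need not be at $y=-1$. Your own formula shows this. If $\tau>0$ and $y=U(t)$ with $t$ close to $-1$ and $w(t)\leq\tau$, then $y-s\leq w(t)\leq\tau$ for $s\geq t$. Hence $\partial_yG_x(y)=\int_t^xS_\tau(s)\dd s\geq0$, strictly once $w>\tau$ somewhere on $(t,x)$, so $G_x$ increases just above $-1$. If $\tau<0$ and $y<-1$ is close to $-1$ with $y-x>\tau$, then $\partial_yG_x(y)=\int_t^x(S_\tau-1)\dd s\leq0$, in general strictly, so $G_x$ decreases just below $-1$. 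Negative $\tau$ cannot be avoided, because the Taylor representation in \cref{thm:optimality} integrates over all $\tau\in[-3,3]$.

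The auxiliary claim of Step~3 also fails. For $2<k<3$ and $s\in[-1,a]$ one has $Q_s=\Unif[-\tfrac32(s+1),\tfrac12(s+1)]$. So for $\tau<0$, $S_\tau(s)=1$ while $s+1\leq 2|\tau|/3$, and $S_\tau(s)=\tfrac14+|\tau|/(2(s+1))$ afterwards, which strictly decreases. Even a monotone integrand in $s$ would not sign $\int_{x'}^x$ for fixed $y$. Indeed $\partial_yG_x$ genuinely changes sign inside the support interval, so what must be controlled is how it varies in $y$, not its sign.

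The missing idea is a three-regime decomposition. For $y$ between the contact points, let $t=t(y)\leq x$ be the representing source, and set $e=y-t$, $r=y-x\leq e$, and $A(s)=\int_{-1}^sS_\tau$. Then $\partial_yG_x=A(x)-A(t)\geq0$ if $e\leq\tau$, and $\partial_yG_x=\int_t^x(S_\tau-1)\leq0$ if $r\geq\tau$. If $r<\tau<e$, then $\partial_yG_x=A(x)+\tau-C(y)$ with $C=A(t)+e$.

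The key fact is that $e$ and $C$ are nondecreasing in $y$. On the upper branch this holds because $w$ and $A$ are nondecreasing. On the lower branch, for $-1\leq s<t\leq b$ one has $d(t)<d(s)$ and $[A+d](t)-[A+d](s)=D(t)-D(s)-\int_s^t(1-S_\tau)<0$. This needs only $0\leq S_\tau\leq1$ and the strict decrease of $D$, not monotonicity of $S_\tau$. Hence the regimes occur in this order, and $\partial_yG_x$ changes sign at most once, from nonnegative to nonpositive. The two zero contacts then give $G_x\geq0$.

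This also removes your ``main obstacle''. For $x\geq b$ one has $D(x)=B(x)$, so $[B(x),D(x)]$ is a single point. The lower-branch targets in $[B(x),-1]$ have sources $t\in[-1,\alpha(x)]\subset[-1,x]$, so they fall under the same argument with no need for \eqref{eq:S-active}. Your handling of targets outside $[D(x),U(x)]$ uses the right ingredients: the support bound \eqref{eq:Q-support} above $U(x)$, and $S_\tau(s)=1$ when $d(s)>\tau$ below $D(x)$.
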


\begin{proof}
Since $h_\tau\in C^1$, define
\begin{equation*}
 S(x):=Q_x((\tau,\infty))\in[0,1]\quad(x>-1),
 \qquad S(-1):=0,
 \qquad \theta_\tau(x):=-\int_{-1}^xS(s)\dd s.
\end{equation*}
Define $\psi_\tau$ by \eqref{eq:psi-lower}--\eqref{eq:psi-upper},
using $h_\tau'(r)=(r-\tau)_+$, and define $\phi_\tau$ by
\eqref{eq:phi-def}. For $-1<x\leq b$,
\[
 S(x)=\frac{(w(x)-\tau)_+-(d(x)-\tau)_+}{w(x)-d(x)},
\]
and for $b\leq x\leq1$,
\[
 (w+k)S=(w-\tau)_+-(\zeta-\tau)_+
          +\int_{\alpha(x)}^xS(s)\dd s.
\]
These identities give both equalities in \cref{lem:contacts} by the
same absolutely-continuous calculation. The value assigned to $S(-1)$
does not affect the dual functions. Furthermore,
\cref{lem:stochastic-order}, applied to $\1_{(\tau,\infty)}$, gives
\[
 (w+k)S\geq\int_{-k}^w\1_{\{r>\tau\}}\dd r
 =h_\tau'(w)-h_\tau'(-k),
\]
so \eqref{eq:boundary-sign} holds as well.

Write $A(x)=\int_{-1}^xS(s)\dd s=-\theta_\tau(x)$ and
$g_x(y)=\partial_yG_x(y)$. Each target $y$ has a representation
$y=T(t)$, using $T=D|_{[-1,b]}$ for $y\leq-1$ and $T=U$ for
$y\geq-1$. The two representations agree at $y=-1$.
The definition of $\psi_\tau'$ gives, for either order of $t$ and $x$,
\begin{equation}\label{eq:g-formula}
\begin{split}
 g_x(T(t))
 &=\int_t^x
 \left[S(s)-\1_{\{T(t)-s>\tau\}}\right]\dd s.
\end{split}
\end{equation}
Indeed,
\[
 h_\tau'(T(t)-x)-h_\tau'(T(t)-t)
 =-\int_t^x\1_{\{T(t)-s>\tau\}}\dd s.
\]

Fix $x$ and first take $D(x)\leq y\leq U(x)$. The representing
source satisfies $t=t(y)\leq x$. Set
\[
 e(y):=y-t(y),\qquad r(y):=y-x,\qquad C(y):=A(t(y))+e(y).
\]
Both $e$ and $C$ are nondecreasing in $y$. To see this on $[-2,-1]$,
let $-1\leq s<t\leq b$. Since $D$ is strictly decreasing and
$0\leq S\leq1$,
\[
 d(t)-d(s)=D(t)-D(s)-(t-s)<0,
\]
and
\[
 [A(t)+d(t)]-[A(s)+d(s)]
 =D(t)-D(s)-\int_s^t(1-S(u))\dd u<0.
\]
The inverse $t(y)$ decreases there, proving the assertion. On $[-1,2]$,
$t(y)$ increases, and both $w(t)$ and $A(t)$ are nondecreasing.
At $y=-1$, both definitions give $e=C=0$.

Since $r(y)\leq e(y)$, \eqref{eq:g-formula} yields
\[
\begin{array}{lll}
 e(y)\leq\tau:
 &g_x(y)=A(x)-A(t)\geq0,\\[2mm]
 r(y)<\tau<e(y):
 &g_x(y)=A(x)+\tau-C(y),\\[2mm]
 \tau\leq r(y):
 &g_x(y)=A(x)-A(t)-(x-t)\leq0.
\end{array}
\]
These regimes occur in the displayed order as $y$ increases; at equality
the adjacent formulas agree. In the middle regime, $g_x$ is
nonincreasing. Therefore $g_x$ can change sign only from nonnegative
to nonpositive. Since $G_x(D(x))=G_x(U(x))=0$, this proves
$G_x\geq0$ on $[D(x),U(x)]$. For $x\geq b$, the left endpoint is
$B(x)$ and its right derivative is also nonnegative by
\eqref{eq:boundary-sign}.

Now suppose $x<b$ and $-2\leq y=D(t)<D(x)$. Then $t>x$, and
\[
 g_x(y)=\int_x^t
 \left[\1_{\{D(t)-s>\tau\}}-S(s)\right]\dd s\leq0.
\]
Indeed, $D(t)\leq D(s)$. If the indicator is one, then
$d(s)>\tau$ and \eqref{eq:Q-free} gives $S(s)=1$; if it is zero,
the integrand is $-S(s)\leq0$. Integrating from $y$ to $D(x)$
therefore gives $G_x(y)\geq G_x(D(x))=0$.

Finally, if $U(x)<y=U(t)\leq2$, then $t>x$ and
\[
 g_x(y)=\int_x^t
 \left[\1_{\{U(t)-s>\tau\}}-S(s)\right]\dd s\geq0.
\]
Here $U(t)\geq U(s)$. If the indicator is zero, then
$w(s)\leq U(t)-s\leq\tau$, and \eqref{eq:Q-support} gives
$S(s)=0$; otherwise the integrand is $1-S(s)\geq0$.
Thus $G_x(y)\geq G_x(U(x))=0$. For $x\geq b$, no admissible target
lies below $D(x)=B(x)$. This covers $I_k(x)$ and proves
\eqref{eq:hinge-dual}.
\end{proof}

\begin{theorem}[Optimality of the explicit coupling]
\label{thm:optimality}
Let $2\leq k<3$ and let $\pi_k$ be defined by \eqref{eq:pi-k}. For
every $h\in C^1([-3,3])$ with convex derivative, $\pi_k$ minimizes
\eqref{eq:primal}.
\end{theorem}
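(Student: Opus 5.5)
The plan is to reduce to truncated quadratic costs $h_\tau(r)=\frac12(r-\tau)_+^2$, for which \cref{lem:hinge} already gives the dual inequality. These are then combined by a Taylor representation. By the Bernstein approximation \eqref{eq:convex-approximation}, it suffices to treat $h\in C^3([-3,3])$ with $h'''\geq0$. For such $h$ and $r\in[-3,3]$, Taylor's formula with integral remainder gives
\[
 h(r)=h(-3)+h'(-3)(r+3)+h''(-3)\tfrac12(r+3)^2
 +\int_{-3}^{3}h_\tau(r)\,h'''(\tau)\dd\tau .
\]
This expresses $h$ as an affine function of $r$, plus a nonnegative multiple of $h_{-3}(r)=\frac12(r+3)^2$, plus a nonnegative mixture of the $h_\tau$.

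The first term is affine in $r=y-x$. Its integral is the same for every coupling in $\M_k(\mu,\nu)$, since it depends only on the marginals. The quadratic term $\frac12(r+3)^2$ is also coupling-independent: $\mathbb E[(Y-X)^2]=1$ and $\mathbb E[Y-X]=0$ for every admissible $\pi$. So only the mixture term matters. For each $\tau$, \cref{lem:hinge} gives bounded $\phi_\tau,\psi_\tau,\theta_\tau$ with
\[
 \phi_\tau(x)+\psi_\tau(y)+\theta_\tau(x)(y-x)\leq h_\tau(y-x)
 \quad\text{on }\{x\in[-1,1],\ y\in I_k(x)\},
\]
with equality at $D(x),U(x)$. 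Integrate this against any $\pi\in\M_k(\mu,\nu)$; the martingale term vanishes by conditioning on $x$. This gives $\int h_\tau\dd\pi\geq\mu(\phi_\tau)+\nu(\psi_\tau)=\int h_\tau\dd\pi_k$, where the equality holds because $\pi_k$ charges only the contact points, using \cref{prop:feasible-plan}. Hence $\int h_\tau(y-x)\dd\pi\geq\int h_\tau(y-x)\dd\pi_k$ for every $\tau\in[-3,3]$.

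Multiply by $h'''(\tau)\geq0$ and integrate in $\tau$. Fubini/Tonelli is justified because $h_\tau(y-x)\leq 18$ on the compact domain and $(\tau,x,y)\mapsto h_\tau(y-x)$ is jointly continuous. This yields $\int h\dd\pi\geq\int h\dd\pi_k$. Then pass to $C^1$ costs with convex derivative by the uniform limit $\|h_n-h\|_\infty\to0$.

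The main point to check is that the dual inequality of \cref{lem:hinge} really covers all $\tau\in[-3,3]$, including $\tau$ near the endpoints where $S=Q_x((\tau,\infty))$ is identically $0$ or $1$. I expect this to be harmless, since the lemma is stated for arbitrary $\tau$. A second point is measurability of $\tau\mapsto\int h_\tau\dd\pi$; continuity in $\tau$ settles it. No direct use of the $h$-dependent functions from \eqref{eq:S-theta} is then needed, though they would agree with $\int\phi_\tau h'''(\tau)\dd\tau$ up to affine corrections.
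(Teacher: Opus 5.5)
Your proposal is correct and is essentially the paper's proof: the same Taylor decomposition into an affine part, a quadratic part, and a nonnegative mixture of the hinge costs $h_\tau$, the same use of \cref{lem:hinge} for each $\tau$, integration against $h'''(\tau)\dd\tau$, and the Bernstein approximation \eqref{eq:convex-approximation}. The differences are minor. You integrate the primal bounds $\int h_\tau\dd\pi\geq\int h_\tau\dd\pi_k$ over $\tau$ rather than superposing the dual functions into a certificate for $h$; this avoids the joint-measurability remark but does not give the dual inequality \eqref{eq:dual-inequality} for $h$ itself. Also, your phrase ``nonnegative multiple of $h_{-3}$'' is inaccurate, since $h''(-3)$ can be negative (e.g.\ $h(z)=z^3$); this is harmless because that term has the same integral under every admissible coupling.
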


\begin{proof}
For $h\in C^3([-3,3])$ with $h'''\geq0$, Taylor's formula gives
\begin{equation*}
 h(r)=\alpha+\beta r+\gamma r^2
      +\frac12\int_{-3}^{3}(r-\tau)_+^2h'''(\tau)\dd\tau
\end{equation*}
for suitable constants $\alpha,\beta,\gamma$.
The first three terms have the same expectation under every admissible
coupling, because
\[
 \mathbb E[Y-X]=0,\qquad
 \mathbb E[(Y-X)^2]=\mathbb E[Y^2]-\mathbb E[X^2].
\]
They also admit exact dual representations; for the quadratic term,
\[
 (y-x)^2=y^2-x^2-2x(y-x).
\]
Integrating \eqref{eq:hinge-dual} with the nonnegative weight
$h'''(\tau)\dd\tau$ and adding these representations gives
\begin{equation}\label{eq:dual-inequality}
 \phi(x)+\psi(y)+\theta(x)(y-x)\leq h(y-x)
\end{equation}
for $x\in[-1,1]$ and $y\in I_k(x)$, with equality on the two graphs
supporting $\pi_k$. The hinge dual functions are jointly measurable
and bounded uniformly for $\tau\in[-3,3]$ and for the source and target
variables in their compact intervals. This justifies the integration.
For every $\pi\in\M_k(\mu,\nu)$, the martingale property gives
\[
 \int h(y-x)\dd\pi
 \geq\mu(\phi)+\nu(\psi)
 =\int h(y-x)\dd\pi_k.
\]
For $h\in C^1([-3,3])$ with convex derivative, apply this result to
$h_n$ from \eqref{eq:convex-approximation}. Since
\[
 \sup_{\pi\in\M_k(\mu,\nu)}
 \left|\int(h_n-h)(y-x)\dd\pi\right|
 \leq\|h_n-h\|_\infty\longrightarrow0,
\]
the optimality inequality passes to the limit.
\end{proof}

\subsection{Optimality for
\texorpdfstring{$1<k<2$}{1 < k < 2}}\label{sec:small-optimality}

We extend the dual proof to $1<k<2$, using estimates for
$\partial_xG_x$ and $\partial_yG_x$ to exclude negative minima of $G_x$.

Fix $1<k<2$ and let $D,U$ be the maps of \cref{prop:small-k}.
We retain the notation $d=D-x$, $w=U-x$, $B=x-k$, and $G_x$
from the preceding subsection. Set $v=x-D$ on $[-1,b]$.
The proof reduces to the costs $h_\tau(r)=\tfrac12(r-\tau)_+^2$.
For these costs, we will show that every negative global minimum of
$y\mapsto G_x(y)$ must belong to
\[
 J_\tau:=\{U(t):x_0<t<b,\ v(t)<\tau<w(t)\}.
\]
This set is either empty or an open interval. We prove nonnegativity
there by combining a comparison in $x$ with a sign estimate for
$\partial_yG_x$; see \cref{fig:comparison-geometry}.

For $x\in[b,1]$, the point $B(x)$ belongs to the image of
$D|_{[-1,b]}$ if $x\leq k-1$, and to the image of $U|_{[-1,x_0]}$
if $x\geq k-1$. Define
\begin{equation}\label{eq:small-predecessor}
 \alpha(x)=
 \begin{cases}
 D_{\mathrm f}^{-1}(x-k),&b\leq x\leq k-1,\\
 U_{\mathrm e}^{-1}(x-k),&k-1\leq x\leq1,
 \end{cases}
 \qquad \zeta(x)=x-k-\alpha(x),
\end{equation}
where $D_{\mathrm f}:=D|_{[-1,b]}$ and
$U_{\mathrm e}:=U|_{[-1,x_0]}$. These restrictions are strictly
monotone. Both inverse formulas give $\alpha(k-1)=-1$, and
\[
 \alpha(b)=b,\qquad \alpha(1)=x_0,\qquad
 x-\alpha(x)=k+\zeta(x).
\]
Thus $\alpha(x)$ is the unique source in the indicated interval whose
lower or upper image equals $B(x)$; see \cref{fig:predecessor-geometry}.

\begin{figure}[tbp]
\centering
\begin{tikzpicture}
\begin{groupplot}[
 group style={group size=2 by 1,horizontal sep=0.65in},
 width=0.45\textwidth,height=2.45in,scale only axis=false,
 xmin=-1.08,xmax=1.05,ymin=-2.35,ymax=2.2,
 xlabel={source},ylabel={target $y$},
 xtick=\empty,ytick={-2,-1,0,1,2},
 tick label style={font=\footnotesize},title style={font=\small},
 label style={font=\small},clip=false]
\nextgroupplot[title={$D_{\mathrm f}(\alpha(x))=x-k$},
 xtick={\TraceLowerAlpha,\TraceLowerX},xticklabels={$\alpha(x)$,$x$}]
\addplot[support obstacle,domain=-0.5:1] {x-1.5};
\addplot[constrained lower,mark repeat=80] table[x=s,y=D] {predecessor_support.dat};
\addplot[constrained upper,mark repeat=80] table[x=s,y=U] {predecessor_support.dat};
\draw[black!40,densely dotted] (axis cs:-1.04,-1)--(axis cs:1,-1);
\draw[black!55,densely dotted]
 (axis cs:\TraceLowerAlpha,-2.35)--(axis cs:\TraceLowerAlpha,\TraceLowerY)
 (axis cs:\TraceLowerX,-2.35)--(axis cs:\TraceLowerX,\TraceLowerY);
\draw[->,black,thick] (axis cs:\TraceLowerX,\TraceLowerY)--(axis cs:\TraceLowerAlpha,\TraceLowerY);
\addplot[only marks,mark=*,mark size=2pt,black]
 coordinates {(\TraceLowerAlpha,\TraceLowerY) (\TraceLowerX,\TraceLowerY)};
\node[font=\footnotesize,anchor=west,fill=white,inner sep=1pt]
 at (axis cs:0.05,-1.48) {$B(x)=D_{\mathrm f}(\alpha(x))$};
\nextgroupplot[title={$U_{\mathrm e}(\alpha(x))=x-k$},
 xtick={\TraceUpperAlpha,\TraceUpperX},xticklabels={$\alpha(x)$,$x$}]
\addplot[support obstacle,domain=-0.5:1] {x-1.5};
\addplot[constrained lower,mark repeat=80] table[x=s,y=D] {predecessor_support.dat};
\addplot[constrained upper,mark repeat=80] table[x=s,y=U] {predecessor_support.dat};
\draw[black!40,densely dotted] (axis cs:-1.04,-1)--(axis cs:1,-1);
\draw[black!55,densely dotted]
 (axis cs:\TraceUpperAlpha,-2.35)--(axis cs:\TraceUpperAlpha,\TraceUpperY)
 (axis cs:\TraceUpperX,-2.35)--(axis cs:\TraceUpperX,\TraceUpperY);
\draw[->,black,thick] (axis cs:\TraceUpperX,\TraceUpperY)--(axis cs:\TraceUpperAlpha,\TraceUpperY);
\addplot[only marks,mark=*,mark size=2pt,black]
 coordinates {(\TraceUpperAlpha,\TraceUpperY) (\TraceUpperX,\TraceUpperY)};
\node[font=\footnotesize,anchor=south,fill=white,inner sep=1pt]
 at (axis cs:0.16,-0.57) {$B(x)=U_{\mathrm e}(\alpha(x))$};
\end{groupplot}
\end{tikzpicture}
\caption{The inverse relation defining $\alpha(x)$ for $k=3/2$.
Left: $D_{\mathrm f}(\alpha(x))=x-k$ for $b\leq x<k-1$.
Right: $U_{\mathrm e}(\alpha(x))=x-k$ for $k-1<x\leq1$.
At $x=k-1$, both definitions give $\alpha(x)=-1$.
The horizontal arrows join points with equal target coordinate.}
\label{fig:predecessor-geometry}
\end{figure}

Define $Q_x$ by \eqref{eq:Q-free}--\eqref{eq:Q-active}, using
\eqref{eq:small-predecessor}, and define the dual functions by
\eqref{eq:S-theta}--\eqref{eq:phi-def}. The proof of
\cref{lem:contacts} still applies: evaluate $\psi'(B(x))$ by
\eqref{eq:psi-lower} when $B(x)\leq-1$, and by
\eqref{eq:psi-upper} when $B(x)>-1$. Hence $G_x$ vanishes at both
support points. We next establish the estimates needed to prove
$G_x\geq0$.

\begin{lemma}[The kernel for $1<k<2$]
\label{lem:small-kernel}
Equations \eqref{eq:Q-free}--\eqref{eq:Q-active} define a unique
probability kernel $(Q_x)_{x\in[-1,1]}$, with
\[
 \supp Q_x\subset[-k,w(x)].
\]
For $-1\leq x\leq k-1$, it also satisfies
\[
 Q_x\leq_{\mathrm{st}}\Unif[0,w(x)],
\]
where the law at $w=0$ is interpreted as $\delta_0$.
\end{lemma}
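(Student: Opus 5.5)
The plan is to repeat the Picard/Volterra argument used for $2\leq k<3$, with the predecessor map \eqref{eq:small-predecessor} in place of $D^{-1}\circ B$. Then I would prove the new stochastic upper bound by a Gronwall comparison that mirrors \cref{lem:stochastic-order}, but with the inequalities reversed.

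\emph{Existence, uniqueness, mass.} On $[-1,b]$, $Q_x$ is given explicitly by \eqref{eq:Q-free}. For $x\in[b,1]$ I would define the operator $\mathcal T$ exactly as before. The only changes are that $\alpha(x)$ now comes from \eqref{eq:small-predecessor} and that $\alpha$ is continuous, since both branches give $\alpha(k-1)=-1$. The predecessor interval $[\alpha(x),b]$ lies in $[-1,b]$, where the kernel is already known, so the same contraction estimate applies: $w+k\geq k$ gives the factor $(1-b)^n/(k^nn!)$. Hence there is a unique fixed point, and it is nonnegative. The total mass solves the scalar Volterra equation. Because $x-\alpha(x)=k+\zeta(x)$, the constant $1$ solves that equation, so uniqueness gives $Q_x(\mathbb R)=1$. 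The lower support bound $\supp Q_x\subset[-k,\infty)$ follows from $d\geq-k$ on $[-1,b]$, which is the constraint of \cref{prop:small-k}, together with $\zeta(x)\geq-k$.

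\emph{Upper support bound (main obstacle).} For $x\geq b$ the Picard iterates are supported below $w(x)$ provided $w(s)\leq w(x)$ for all $s\in[\alpha(x),x]$. On $[b,1]$, $w$ is increasing, because $\dd w/\dd x=E/(2k-E)>0$. On $[-1,b]$, however, $w$ need not be monotone; this is the point of the remark on the upper displacement. Since $w(x)\geq w(b)=\delta_b$ for $x\geq b$, it suffices to prove
\[
 \max_{[-1,b]}w=w(b)=\delta_b .
\]
On $[x_0,b]$ the parametrization gives $w=W(\delta)$ with $W^2=\mathcal W$ and $\mathcal W'=-C$. I would therefore compare $\mathcal W(\delta)$ with $\delta_b^2=\mathcal W(\delta_b)$, tracking the sign of $C=2k-2-\eta(\delta)$. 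On $[-1,x_0]$ I would use $U\leq1-k$ and $x\geq-1$, which give $w\leq2-k$, and then compare $2-k$ with $\delta_b$. If the global bound fails for $k$ near $1$, I would replace it by the weaker requirement $w(s)\leq w(x)$ only for $s\in[\alpha(x),x]$. This needs only sources $s\geq\alpha(x)$, and $\alpha(x)$ moves toward $b$ as $x\downarrow b$. This step is where I expect the real work, possibly requiring explicit logarithmic estimates of the type used in \cref{lem:small-k-transition}.

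\emph{Stochastic upper bound.} For $x\leq b$, the bound $\Unif[d,w]\leq_{\mathrm{st}}\Unif[0,w]$ is immediate from $d\leq0$. Now take $b<x\leq k-1$, so that $\alpha(x)\in[-1,b]$ on the lower branch and $\zeta\leq0$. Fix a bounded increasing $f$ and set
\[
 q_f(x)=\int f\dd Q_x,\qquad v_f(t)=\frac1t\int_0^tf,
\]
with $v_f(0)=f(0)$. The function $v_f$ is increasing, and $f\leq f(0)\leq v_f(t)$ on $[-k,0]$. From \eqref{eq:Q-active} and $-\zeta+(x-\alpha)=k$ I get
\[
 (w+k)\bigl(q_f(x)-v_f(w)\bigr)=\int_\zeta^0\bigl(f-v_f(w)\bigr)+\int_{\alpha(x)}^x\bigl(q_f(s)-v_f(w)\bigr)\dd s .
\]
The first integral is $\leq0$. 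In the second, I would use $v_f(w(s))\leq v_f(w(x))$, which holds by the monotonicity of $w$ established above. Setting $e(s)=[q_f(s)-v_f(w(s))]_+$, which vanishes on $[-1,b]$, this gives $e(x)\leq k^{-1}\int_b^xe$. Gronwall's lemma then yields $e\equiv0$ on $[b,k-1]$. The restriction $x\leq k-1$ is used exactly to guarantee $\alpha(x)\leq b$ lies on the lower branch, so that $\zeta\leq0$ and all predecessor sources already satisfy the bound.
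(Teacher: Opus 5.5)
Your Picard/mass argument and your Gronwall proof of the stochastic upper bound are sound in themselves. The Gronwall proof is a valid variant of the paper's tail-by-tail induction through the Volterra iterates. Both, however, rest on the window bound
\[
 w(s)\leq w(x)\qquad(\alpha(x)\leq s\leq x,\ b\leq x\leq1),
\]
and you never prove it. The same bound also yields $\zeta(x)\leq w(x)$. That inequality is needed for $\lambda_{[\zeta(x),w(x)]}$ to be a nonnegative measure of mass $w-\zeta$ when $x>k-1$, where $\zeta(x)=w(\alpha(x))>0$. The window bound is the real content of this lemma.

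Your primary target, $\max_{[-1,b]}w=\delta_b$, is false for $k$ near $1$. Since $w$ increases on $[-1,x_0]$, the maximum is at least $w(x_0)=(3-2k+\delta_0)/2$. Write $p=k-1$ and $\delta_0=1-pt$. From $T(\delta_0)=0$, the value $t$ tends to the root $t_0\approx1.397$ of $\frac{1+t}{t}\log(1+t)=\frac32$, so $w(x_0)=1-(1+t_0/2)p+o(p)$. On the other hand, $z(\delta_b)=b$ gives $\delta_b=1-(e-1)p+o(p)$. Since $1+t_0/2\approx1.698<e-1$, we get $w(x_0)>\delta_b$. This is the non-monotonicity flagged in the remark on the upper displacement. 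Your crude estimate $w\leq2-k$ on $[-1,x_0]$ is weaker still: $z(2-k)-b=2(1-k\log2)$ shows $2-k>\delta_b$ for all $k<1/\log2$.

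Your fallback, that $\alpha(x)$ approaches $b$ as $x\downarrow b$, only covers $x$ near $b$. For $x\geq z(\delta_0)$, which includes the whole upper-branch range $x\geq k-1$, the window contains $x_0$.

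The missing ideas are the following.
\begin{enumerate}[label=\textup{(\roman*)},leftmargin=2.2em]
\item $w$ is strictly increasing on $[-1,x_0]$. This comes from $U'=\frac{2v}{v+w}y'(R)>\frac{v}{x+1}>1$, using strict convexity of $y$ and $D<-1$.
\item On $[x_0,b]$, $w'=C/(2W+C)$ with $C$ monotone in the source. Hence $w$ decreases and then increases there, so its maximum over any subinterval sits at an endpoint.
\item If $\alpha(x)\in[x_0,b]$, then $D(\alpha(x))=x-k=y(w(x))$. So the parameter of $\alpha(x)$ in the middle parametrization is exactly $\delta=w(x)$, and $w(\alpha(x))=W(\delta)<\delta$. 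The last inequality holds because $\delta^2-\mathcal W(\delta)$ vanishes at $\delta_b$ and has derivative $2\delta+C$, which is decreasing and equals $2k-2>0$ at $\delta=1$.
\item If $\alpha(x)\leq x_0$, then $w(x)\geq\delta_0>\max\{w(x_0),\delta_b\}$. This uses $\delta_0-w(x_0)=(\delta_0-3+2k)/2$ and $3-2k<\delta_b$.
\end{enumerate}
Without these steps, both the upper support bound and the inequality $v_f(w(s))\leq v_f(w(x))$ in your Gronwall step remain unjustified.
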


\begin{proof}
We first bound $w(s)$ for the sources $s$ in the integral defining $Q_x$.
For $-1<x\leq x_0$, strict convexity of $y(\delta)$ gives
\[
 U'(x)=\frac{2v}{v+w}y'(R(x))
       >\frac{2v}{R(x)-L(x)}
       =\frac{v}{x+1}>1,
\]
because $D(x)<-1$. Thus $w$ is strictly increasing on $[-1,x_0]$.
On $[x_0,b]$, \eqref{eq:middle-gaps} gives
$w'=C/(2W+C)$. Since $C$ increases with the source coordinate,
$w$ first decreases and then increases, with either part possibly empty.
Moreover, for $\delta_b<\delta\leq\delta_0$,
\[
 \delta^2-W(\delta)^2>0.
\]
Indeed, the difference $\delta^2-\mathcal W(\delta)$ vanishes at
$\delta_b$ and has derivative
$2\delta+C=\delta+2k-2-z(\delta)$. This derivative is decreasing on
$[\delta_b,1]$ and equals $2k-2>0$ at $\delta=1$.

Fix $x\in[b,1]$ and set $\delta=w(x)$.
If $\alpha(x)\in[x_0,b]$, its parameter in
\eqref{eq:middle-delta} equals $\delta$. Therefore
$\max_{\alpha(x)\leq s\leq b}w(s)$ is
$\max\{W(\delta),\delta_b\}\leq\delta$.
If $\alpha(x)\in[-1,x_0]$, then $\delta\geq\delta_0$, whereas
$\max_{-1\leq s\leq b}w(s)$ is
$\max\{w(x_0),\delta_b\}<\delta_0$. Here $w(x_0)<\delta_0$ because
$\delta_0-w(x_0)=(\delta_0-3+2k)/2$ and $3-2k<\delta_b<\delta_0$. The
first inequality is trivial if $k\geq3/2$; otherwise it follows from
$z(3-2k)=2k-1-2k\log3<k-2=z(\delta_b)$, as $2k\log3>2k>k+1$.
Since $w$ is increasing on $[b,1]$, for every $x\in[b,1]$ we obtain
\begin{equation}\label{eq:small-window}
 \zeta(x)\leq w(x),\qquad
 w(s)\leq w(x)\quad\text{for }\alpha(x)\leq s\leq x.
\end{equation}
Positivity, total mass one, and uniqueness now follow from the same
Volterra iteration as in \cref{sec:duality}.
Using \eqref{eq:small-window} at each iterate proves the support assertion.

For the stochastic-order assertion, it suffices to compare upper tails
at $\tau\geq0$. For $x\in[-1,b]$, the assertion follows from
$d\leq0$. For $b\leq x\leq k-1$, one has $\zeta\leq0$ and
$x-\alpha(x)\leq k$.
If the preceding kernels satisfy the asserted upper bound, their tails,
by \eqref{eq:small-window}, are at most $(w(x)-\tau)_+/w(x)$.
The numerator in \eqref{eq:Q-active} is therefore at most
\[
 (w(x)-\tau)_+
 +(x-\alpha(x))\frac{(w(x)-\tau)_+}{w(x)}
 \leq (k+w(x))\frac{(w(x)-\tau)_+}{w(x)}.
\]
The Volterra iteration preserves this bound and proves the claim.
\end{proof}

The remaining estimates use the values of $v$ and $w$ at $x_0$. Write
\[
 p=k-1,\quad y_0=1-k,\quad
 v_0=v(x_0),\quad w_0=w(x_0),\quad
 \ell=x_0+1=\frac{1-\delta_0}{2}.
\]
Since $D$ decreases on $[-1,b]$, $v$ is strictly increasing there.
The following elementary comparison will be applied with $J=J_\tau$.

\begin{lemma}[Comparison in the source coordinate]
\label{lem:source-comparison}
Let $(x,y)\mapsto G_x(y)$ be continuously differentiable on
$b\leq x\leq1$, $B(x)\leq y\leq U(x)$, with
$G_x(B(x))=G_x(U(x))=0$. Suppose that for every fixed $x$,
each negative global minimum of $y\mapsto G_x(y)$ lies in an open
interval $J$ independent of $x$, with
$\overline J\subset[B(1),U(b)]$. Assume also $G_b\geq0$.
If there is $x_*\in[b,1]$ such that
\begin{enumerate}[label=\textup{(\roman*)},leftmargin=2.2em]
 \item $\partial_yG_x(y)\geq0$ on $J$ for $x\geq x_*$;
 \item for each $y\in J$, $x\mapsto G_x(y)$ attains its minimum on
       $[b,x_*]$ at an endpoint,
\end{enumerate}
then $G_x(y)\geq0$ throughout its domain.
\end{lemma}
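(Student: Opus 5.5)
We argue by contradiction, using a minimal counterexample in the source variable. Suppose $G_x(y)<0$ for some admissible $(x,y)$. Put
\[
 m(x):=\min_{B(x)\leq y\leq U(x)}G_x(y),\qquad x\in[b,1].
\]
Joint continuity of $(x,y)\mapsto G_x(y)$ on the compact domain, together with continuity of the endpoints $B,U$, makes $m$ continuous. We have $m\leq0$ everywhere, because $G_x$ vanishes at both endpoints, and $m(b)=0$ because $G_b\geq0$. So if $m$ is somewhere negative, we fix some $x_1\in(b,1]$ with $m(x_1)<0$ and a minimizer $y_1$ of $G_{x_1}$. By hypothesis $y_1\in J$, and $y_1$ is interior to $[B(x_1),U(x_1)]$, since $G_{x_1}$ vanishes at the endpoints while $G_{x_1}(y_1)<0$.

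\emph{Case $x_1\geq x_*$.} Here we use (i) together with the structure of $J$. Since $J$ is an open interval, write $J=(j_-,j_+)$ and set $\bar y:=\min\{j_+,U(x_1)\}$. By (i), $\partial_yG_{x_1}\geq0$ on $[y_1,\bar y)$, so $G_{x_1}(\bar y)\leq G_{x_1}(y_1)$. Because $y_1$ is a global minimizer, equality holds, and $\bar y$ is again a negative global minimum. If $\bar y=U(x_1)$, this contradicts $G_{x_1}(U(x_1))=0$. Otherwise $\bar y=j_+$ lies outside the open interval $J$, which contradicts the hypothesis that every negative global minimum lies in $J$.

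\emph{Case $x_1<x_*$.} Here we use (ii). Since $y_1\in J\subset[B(1),U(b)]$ and $B$ increases while $U$ increases, the point $y_1$ is admissible for each source in $[b,x_*]$ with $B(x)\leq y_1\leq U(x)$. By (ii), $x\mapsto G_x(y_1)$ attains its minimum over $[b,x_*]$ at $x=b$ or $x=x_*$. Hence one of the following holds:
\[
 G_b(y_1)\leq G_{x_1}(y_1)<0
 \qquad\text{or}\qquad
 G_{x_*}(y_1)\leq G_{x_1}(y_1)<0.
\]
The first is impossible since $G_b\geq0$. The second makes $G_{x_*}$ negative somewhere, so $m(x_*)<0$, and the previous case with $x_1=x_*$ gives a contradiction.

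The main obstacle is admissibility: whether $y_1$ lies in the $y$-domain $[B(x),U(x)]$ for every $x\in[b,x_*]$, and in particular for $x=b$ and $x=x_*$. We have $y_1\leq U(b)\leq U(x)$, but $y_1\geq B(x)$ could fail for $x$ near $x_*$ if $y_1<B(x_*)$. I would handle this by reading (ii) as quantified over those $x\in[b,x_*]$ for which $y$ is admissible. On that set, which is an interval $[b,\min\{x_*,y_1+k\}]$, the endpoint alternatives are: $x=b$, excluded by $G_b\geq0$; $x=x_*$, handled by the first case; or $x=y_1+k$, where $y_1=B(x)$ is a boundary point and $G_x(y_1)=0$, a contradiction. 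The remaining verifications are routine compactness and continuity statements.
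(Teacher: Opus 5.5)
You have the same two-stage structure as the paper: first $G_x\ge0$ for $x\ge x_*$ via (i), then (ii) reduces $x\in[b,x_*]$ to the endpoints $b$ and $x_*$. However, the step in your first case fails as written. From $\partial_yG_{x_1}\ge0$ on $[y_1,\bar y)$ you only learn that $G_{x_1}$ is nondecreasing there. That gives $G_{x_1}(\bar y)\ge G_{x_1}(y_1)$, not $\le$. This is the same inequality that global minimality already provides, so no equality is forced and no contradiction follows. A strict minimum at $y_1$ with $G_{x_1}$ increasing to its right satisfies everything you used.

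The monotonicity has to be used to the \emph{left}. Write $J=(j_-,j_+)$. For $y\in(j_-,y_1]$, monotonicity gives $G_{x_1}(y)\le G_{x_1}(y_1)$ and minimality gives the reverse inequality. Hence $G_{x_1}\equiv G_{x_1}(y_1)<0$ on $(j_-,y_1]$. Continuity then produces a negative global minimum at $j_-\notin J$, which contradicts the location hypothesis. This is exactly the paper's argument: it extends any negative global minimum in $J$ to the left endpoint of $J$. With this repair, your second case goes through unchanged.

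Your ``main obstacle'' is not actually an obstacle. Since $B$ and $U$ increase, the hypothesis $\overline J\subset[B(1),U(b)]$ gives
$B(x)\le B(1)\le y\le U(b)\le U(x)$ for every $y\in\overline J$ and every $x\in[b,1]$. So $y_1$ is admissible for every source in $[b,x_*]$, including $x_*$, as you correctly noted at the start of your second case. The same inclusion also places $j_-$ in the $y$-domain of $G_{x_1}$, which the repaired first case needs. It likewise shows that your $\bar y$ always equals $j_+$. There is therefore no need to reinterpret (ii) over a smaller source interval; this inclusion is the first line of the paper's proof.
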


\begin{proof}
Since $B$ and $U$ are increasing,
$\overline J\subset[B(x),U(x)]$ for every $x\in[b,1]$.
For $x\geq x_*$, monotonicity on $J$ and continuity would extend any
negative global minimum in $J$ to its left endpoint, contrary to the
hypothesis. Thus $G_x\geq0$ for these $x$.
For $b\leq x\leq x_*$ and $y\in J$, condition~(ii) gives
$G_x(y)\geq\min\{G_b(y),G_{x_*}(y)\}\geq0$. The assumed location
of every negative minimum then implies $G_x\geq0$ on its full domain.
\end{proof}

\begin{figure}[tbp]
\centering
\begin{tikzpicture}
\begin{axis}[
 name=displacementpanel,width=0.49\textwidth,height=2.4in,
 xmin=0,xmax=1,ymin=0.66,ymax=1.14,
 xlabel={$\lambda=(t-x_0)/(b-x_0)$},ylabel={displacement},
 title={$v(t)<\tau<w(t)$},
 title style={font=\small},label style={font=\small},
 tick label style={font=\footnotesize},
 xtick={0,0.5,1},ytick={0.7,0.9,1.1},
 legend style={font=\footnotesize,draw=none,at={(0.5,-0.28)},anchor=north,legend columns=3},
 clip=false]
\fill[black!10] (axis cs:0,0.66) rectangle (axis cs:\ComparisonEnd,1.14);
\addplot[BrickRed!85!black,thick,mark=o,mark repeat=35,
 mark options={solid,fill=white},mark size=1.3pt]
 table[x=lambda,y=v] {middle_displacements.dat};
\addlegendentry{$v(t)$}
\addplot[MidnightBlue!90!black,thick,dashed,mark=square,mark repeat=35,
 mark phase=17,mark options={solid,fill=white},mark size=1.2pt]
 table[x=lambda,y=w] {middle_displacements.dat};
\addlegendentry{$w(t)$}
\addplot[black,densely dotted,thick,domain=0:1] {\ComparisonTau};
\addlegendentry{$\tau$}
\node[font=\footnotesize,anchor=north west,align=left,fill=white,inner sep=2pt]
 at (axis cs:0.2,1.125) {$v(t)<\tau<w(t)$\\in the shaded interval};
\draw[black!65,->] (axis cs:0.25,1.025)--(axis cs:0.08,0.965);
\end{axis}
\begin{axis}[
 at={(displacementpanel.outer east)},anchor=outer west,xshift=0.35in,
 width=0.43\textwidth,height=2.4in,
 xmin=0,xmax=1,ymin=0,ymax=1.35,
 axis lines=left,xtick={0,1},xticklabels={$b$,$x_*$},ytick=\empty,
 xlabel={source $x$},ylabel={$G_x(y)$},
 title={$x\mapsto G_x(y)$, $y\in J_\tau$},
 title style={font=\small},label style={font=\small},
 tick label style={font=\footnotesize},clip=false]
\addplot[black!50,densely dotted,domain=0:1] {0.25};
\addplot[MidnightBlue!90!black,very thick,smooth]
 coordinates {(0,0.38) (0.16,0.68) (0.36,1.0) (0.50,1.09)
              (0.70,0.89) (0.88,0.49) (1,0.25)};
\addplot[only marks,mark=o,mark size=2pt,black,mark options={fill=white}]
 coordinates {(0,0.38) (1,0.25)};
\node[font=\footnotesize,anchor=south,align=center] at (axis cs:0.5,1.1)
 {$G_x(y)\geq\min\{G_b(y),G_{x_*}(y)\}$};
\node[font=\footnotesize,anchor=north,align=center] at (axis cs:0.5,0.21)
 {both endpoint values $\geq0$};
\node[font=\footnotesize,anchor=north,align=center] at (axis cs:0.5,-0.42)
 {Schematic; no concavity assertion.};
\end{axis}
\end{tikzpicture}
\caption{Left: $v(t)=t-D(t)$ and $w(t)=U(t)-t$ on $[x_0,b]$
for $k=1.1$ and $\tau\approx0.76972$.
The shaded set satisfies $v(t)<\tau<w(t)$, and its image under $U$
is $J_\tau$. Right (schematic): for fixed $y\in J_\tau$,
$G_x(y)\geq\min\{G_b(y),G_{x_*}(y)\}$ on $[b,x_*]$.
The proof establishes this bound by derivative signs; the plotted
curve is not a computed dual gap.}
\label{fig:comparison-geometry}
\end{figure}

\begin{lemma}[Estimates at $x_0$]
\label{lem:small-dual-estimates}
Suppose $v_0<w_0$. Then
\begin{gather*}
 1<k<\frac{67}{50},\qquad
 \frac35<\frac{1-\delta_0}{k-1}<\frac75,\qquad U(b)<0,\\
 w(x)-\tau>\frac{k-w(x)}4
 \quad\text{if }x\in[b,1],\ x\geq y_0,\ v_0\leq\tau\leq w_0.
\end{gather*}
Furthermore,
\begin{equation}\label{eq:small-activation-bound}
 (\delta_0-\tau)
 \left[\exp\left(\frac{\tau+v_0-\ell}{k+1}\right)-1\right]
 \geq w_0-\tau,\qquad v_0\leq\tau\leq w_0.
\end{equation}
\end{lemma}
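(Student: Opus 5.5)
The plan is to express every quantity at $x_0$ through the single parameter $\delta_0$ and then bound $\delta_0$ using rational bounds for the logarithm.

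\textbf{Quantities at $x_0$.} By \cref{prop:small-k} and \eqref{eq:middle-delta},
\[
 x_0=-\frac{1+\delta_0}2,\qquad D(x_0)=y(\delta_0),\qquad U(x_0)=1-k.
\]
Hence
\[
 w_0=1-k-x_0,\qquad v_0=x_0-y(\delta_0)=x_0+k-z(\delta_0),\qquad \ell=\frac{1-\delta_0}2.
\]
The defining equation $T(\delta_0)=0$ from \cref{lem:small-k-transition} is explicit in $\delta_0$ and $k$, through $A$ in \eqref{eq:A-delta} and the logarithm in $z$.

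\textbf{Step 1: the hypothesis forces $k$ near $1$.} Write $u=1-\delta_0$ and $p=k-1$. I expand $T(\delta_0)=0$ to leading orders in $u$ and $p$, keeping remainder terms, using
\[
 \log(1-s)\le -s-\tfrac{s^2}{2},\qquad \log(1-s)\ge -s-\tfrac{s^2}{2(1-s)}
\]
applied to $\log\frac{k-\delta}{k-1}=\log\bigl(1+\frac{u}{p}\bigr)$ or its variant. This should give $u/p\in(3/5,7/5)$, with the true ratio near $1$ at leading order. One caveat: I am not sure these bounds hold for all $k\in(1,2)$; the bracket on $u/p$ may only hold under $k<67/50$. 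In that case the order is: first show that $v_0<w_0$ fails once $k\ge 67/50$, using a crude bound on $\delta_0$ from the sign pattern of $T$ (evaluate $T$ at rational trial points and show the sign forces $v_0-w_0\ge0$). Then, for $k<67/50$, sharpen to the bracket on $u/p$. Since $v_0-w_0$ is a polynomial in $x_0,k$ plus a logarithm, each sign check reduces to an explicit polynomial inequality in $(k,\delta_0)$ after the rational log bounds. I check these on the relevant rectangle by monotonicity in each variable, or by clearing denominators and examining coefficients. The bound $U(b)=b+\delta_b<0$ follows similarly from $z(\delta_b)=b$ with $k<67/50$: $\delta_b$ is close to $1-O(p)$ and $b=k-2$ is near $-1$.

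\textbf{Step 2: the inequality $w(x)-\tau>(k-w(x))/4$.} For $x\in[b,1]$ we have $w=\delta$ with $x=z(\delta)$. The condition $x\ge y_0=1-k$ gives a lower bound on $\delta$ via monotonicity of $z$. It then suffices to show $5\delta-k>4w_0\ge4\tau$ at the smallest admissible $\delta$, which is again a one-variable estimate after the bounds of Step 1.

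\textbf{Step 3: the activation bound \eqref{eq:small-activation-bound}.} Set
\[
 F(\tau)=(\delta_0-\tau)\bigl(e^{(\tau+v_0-\ell)/(k+1)}-1\bigr)-(w_0-\tau).
\]
The second derivative of the first term has an explicit sign, making $F$ concave or convex on $[v_0,w_0]$. If $F$ is concave, checking the two endpoints suffices. Otherwise I check the endpoint $\tau=w_0$, where the right side vanishes and one needs $\delta_0>w_0$ and $v_0+w_0>\ell$, together with a derivative sign. Both cases use $e^s\ge1+s$ and a quadratic lower bound on the exponential, reducing to polynomial inequalities in $(p,u)$ on the box from Step 1.

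\textbf{Main obstacle.} I expect Step 1 to be the hardest: extracting sharp enough two-sided control of $\delta_0$ from the transcendental equation $T(\delta_0)=0$ uniformly in $k$. The fixed constants $67/50$, $3/5$ and $7/5$ suggest that a careful second-order expansion with certified remainders is required.
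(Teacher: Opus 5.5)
\textbf{There is a genuine gap.} Your outline has the right overall shape: scalar inequalities in $k$ and $\delta_0$, rational logarithm bounds, and concavity in $\tau$ with endpoint checks. Your Step~2 is essentially the paper's argument, and your route to $U(b)<0$ can be made to work. But Steps~1 and~3 would fail as described.

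\emph{Step~1, the threshold $67/50$.} Crude bounds on $\delta_0$ at fixed rational trial points cannot reach it. The sign change of the relevant quantity occurs near $k\approx1.3394$, so $67/50$ is nearly sharp. Moreover, as $k\uparrow2$ both $\delta_0$ and the critical value tend to $1$, so no fixed trial point $\delta'<1$ certifies anything near $k=2$. The missing observation is that, by \eqref{eq:middle-gaps},
\[
v_0-w_0=C(\delta_0)=2k-2-z(\delta_0)-\delta_0 .
\]
This is strictly decreasing in $\delta_0$, with explicit zero $\delta_m=k-(k-1)e^{(2-k)/k}$. By the sign pattern of $T$, $v_0<w_0$ holds exactly when $T(\delta_m)<0$. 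Moreover $T(\delta_m)=\tfrac{k^2}{4}\,rf(r)$, where $r=(k-1)/k$ and $f(r)=r(3+e^{1-2r})^2-4(3-e^{1-2r})$, so it depends on $k$ alone. The paper shows $f\geq0$ on $[17/67,1/2]$. So the trial point must be the exact $k$-dependent zero $\delta_m$.

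\emph{Step~1, the bracket on $t=(1-\delta_0)/(k-1)$.} This ratio is of order one, not small, so second-order bounds on $\log(1+t)$ cannot give it. Its leading-order value is not $1$ but the root $t\approx1.397$ of $\frac{1+t}{t}\log(1+t)=\frac32$. So the bound $7/5$ is nearly sharp and needs $\log(12/5)>7/8$. What makes it tractable is the exact identity
\[
T(1-pt)=pt\{1-p+pt/4-2(1+p)H(t)\},\qquad H(t)=\tfrac{1+t}{t}\log(1+t)-1,
\]
which is linear in $p$.

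\emph{Step~3.} You plan to check polynomial inequalities in $(p,u)$ over the box from Step~1, treating $p$ and $u$ as independent. The target inequality is false on that box. Take $p\downarrow0$ with $t=1$: then $\delta_0,w_0\to1$, $\ell\to0$, $v_0\to2\log2-1<\log2$, and the difference of the two sides of \eqref{eq:small-activation-bound} at $\tau=v_0$ tends to $(1-v_0)(e^{v_0}-2)<0$. So the relation $T(\delta_0)=0$ must enter quantitatively, not only through the bracket. The paper proceeds as follows.
\begin{enumerate}
\item Solve $T(\delta_0)=0$ as $p=(1-2H)/(1+2H-t/4)$.
\item Use $2(1+p)H=1-p+pt/4$ to remove the logarithm from $v_0$.
\item Replace $e^a-1$ by its tangent bound at $\log2$. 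Note that $e^s\geq1+s$ alone fails: on the constraint, the limit $p\downarrow0$ requires $e^{v_0}\geq2$ with $v_0\approx0.75$.
\item Confine $p$ between rational functions $p_\pm(t)$, obtained from two-sided series bounds on $\log(1+t)$.
\end{enumerate}
Concavity in $p$ of the resulting polynomial $N(p,t)$ then reduces everything to two one-variable polynomial checks on $[3/5,7/5]$.
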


\begin{proof}
\emph{1. Bounds on $k$ and $\delta_0$.}
The zero of $C$ is
$\delta_m=k-(k-1)e^{(2-k)/k}\in(\delta_b,1)$.
Put $r=(k-1)/k$ and $E=e^{1-2r}$. Substitution into
\eqref{eq:T-delta} gives
\[
 \frac{4T(\delta_m)}{k^2}=r f(r),\qquad
 f(r)=r(3+E)^2-4(3-E).
\]
Differentiation gives
\[
 f''(r)=8E\{r(3+2E)-(1+E)\}.
\]
The expression in braces is strictly increasing, since its derivative
is $3+4E(1-r)>0$. Thus $f'$ first decreases and then increases,
with either part possibly empty. Since $f'(1/2)=0$, $f'$ cannot
cross from negative to positive before the right endpoint. Hence
$f$ has no interior minimum on
$[17/67,1/2]$. At the right endpoint $f=0$.
At the left endpoint,
$e^{33/67}>81823/50000$, and substitution of this rational lower bound
gives
\[
 f(17/67)>\frac{40556593}{167500000000}>0.
\]
The exponential bound follows already from its Taylor polynomial of
degree eight. It follows that $T(\delta_m)\geq0$ when $k\geq67/50$.
By \cref{lem:small-k-transition}, this implies
$\delta_0\leq\delta_m$ and $C(\delta_0)\geq0$, or $v_0\geq w_0$.
The hypothesis therefore forces $0<p<17/50$.

For $t>0$, direct substitution gives
\[
 T(1-pt)=pt\{1-p+pt/4-2(1+p)H(t)\},\qquad
 H(t)=\frac{1+t}{t}\log(1+t)-1.
\]
At $t=3/5$ the expression in braces decreases with $p$.
The bound $\log(8/5)<47001/100000$ shows that its value at
$p=17/50$ exceeds $19997/625000>0$.
Hence $\delta_0<1-3p/5$.
At $t=7/5$, the bound $\log(12/5)>7/8$ gives $H(t)>1/2$,
so that the expression in braces is negative. Hence
$\delta_0>1-7p/5$. These logarithm bounds follow from the positive series
for $\log z$ in powers of $(z-1)/(z+1)$, with a geometric bound on
the remainder.

\medskip\noindent\emph{2. Bounds on $w$ and $U(b)$.}
The bound on $\delta_0$ gives $w_0=(3-2k+\delta_0)/2<1-13p/10$.
Also,
\[
 z(1-5p/6)=1+5p/6-2(1+p)\log(11/6)<-p
 \quad(0<p\leq17/50),
\]
using $\log(11/6)>487/804$.
Monotonicity of $z$ yields, for $x\in[b,1]$ with $x\geq y_0=-p$,
\[
 w(x)>1-\frac{5p}{6},\qquad
 w(x)-\tau\geq w(x)-w_0>\frac{7p}{15}>
 \frac{11p}{24}>\frac{k-w(x)}4.
\]
Finally $z(2-k)-b=2(1-k\log2)>0$ in this range, so that
$\delta_b<2-k$ and $U(b)=b+\delta_b<0$.

\medskip\noindent\emph{3. Reduction of \eqref{eq:small-activation-bound} to a rational inequality.}
Set $t=(1-\delta_0)/p\in[3/5,7/5]$. The equation $T(\delta_0)=0$
expresses all quantities in terms of this single variable:
\begin{equation}\label{eq:small-scalar-parameters}
\begin{split}
 H&=\frac{1+t}{t}\log(1+t)-1,\qquad
 p=\frac{1-2H}{1+2H-t/4},\\
 \delta_0&=1-pt,\qquad \ell=pt/2,\\
 v_0&=p-1-pt/2+2(1+p)\log(1+t),\qquad
 w_0=1-p-pt/2.
\end{split}
\end{equation}
In particular, $\delta_0-w_0=p(1-t/2)>0$.
Also $v_0>\ell$, since $D(x_0)<-1$ and $\ell=x_0+1$.
Define, for $\tau\in[v_0,w_0]$,
\[
 \mathcal E(\tau)
 =(\delta_0-\tau)
 \left[\exp\left(\frac{\tau+v_0-\ell}{k+1}\right)-1\right]
 -(w_0-\tau).
\]
Direct differentiation gives
\[
 \mathcal E''(\tau)
 =\frac{\exp((\tau+v_0-\ell)/(k+1))}{(k+1)^2}
   \bigl(\delta_0-\tau-2(k+1)\bigr)<0,
\]
because $\delta_0<1$, $\tau\geq v_0>\ell>0$, and $k+1>2$.
Moreover $\mathcal E(w_0)>0$, since $\delta_0>w_0$ and
$w_0+v_0-\ell>0$. By concavity, it suffices to prove
$\mathcal E(v_0)\geq0$.

Using $2(1+p)H=1-p+pt/4$ in
\eqref{eq:small-scalar-parameters} eliminates the logarithm from $v_0$:
\[
 v_0=\frac{p-1+(2+3p/2)t-pt^2/4}{1+t}.
\]
Put $a=(2v_0-\ell)/(2+p)$. The tangent inequality for the exponential
at $\log2$ gives, for every real $a$,
\[
 e^a-1\geq2a+1-2\log2>2a-\frac25.
\]
Here $\log2<7/10$ follows, for example, from
\[
 e^{7/10}>1+\frac7{10}+\frac{49}{200}+\frac{343}{6000}
 =\frac{12013}{6000}>2.
\]
Since $\delta_0-v_0>0$, it is therefore enough to prove the following
rational inequality:
\begin{equation}\label{eq:small-rational-bound}
 (\delta_0-v_0)\left(2a-\frac25\right)-(w_0-v_0)
 =\frac{N(p,t)}{20(p+2)(1+t)^2}>0,
\end{equation}
where direct expansion yields
\[
 \begin{split}
 N(p,t)={}&(30t^4+36t^3-179t^2-172t-32)p^2\\
 &+(-58t^3-310t^2+388t+280)p\\
 &-104t^2+344t-272.
 \end{split}
\]

\medskip\noindent\emph{4. Positivity of $N(p,t)$.}
Set $q=t/(2+t)\in(0,1)$.
The positive series for the logarithm gives
\[
 2q\leq\log(1+t)
 =2\sum_{j=0}^{\infty}\frac{q^{2j+1}}{2j+1}
 \leq2q+\frac{2q^3}{3(1-q^2)}.
\]
Consequently
\[
 \frac{t}{t+2}\leq H\leq\frac{t(t+6)}{6(t+2)}.
\]
The function $H\mapsto(1-2H)/(1+2H-t/4)$ is decreasing on this
interval, with positive denominator. Substitution gives
\[
 p_-(t):=\frac{4(6-3t-t^2)}{t^2+30t+24}
 \leq p\leq
 p_+(t):=\frac{4(2-t)}{8+10t-t^2}.
\]
All denominators here are positive for $3/5\leq t\leq7/5$.
As a polynomial in $p$, $N(p,t)$ is concave: its leading coefficient is
\[
 t^2(30t^2+36t-179)-172t-32<0,
\]
since $30t^2+36t-179\leq-349/5$ on this interval.
It suffices, therefore, to prove positivity at $p_-(t)$ and $p_+(t)$.

At the upper endpoint, direct substitution gives
\[
 N(p_+(t),t)
 =\frac{16(t-2)(t+1)^2(9t^3+164t^2-316t+48)}
 {(t^2-10t-8)^2}>0.
\]
Indeed, the cubic in the numerator is convex on $[3/5,7/5]$ and
has endpoint values $-10077/125$ and $-6033/125$. It is thus negative
throughout the interval, as is $t-2$.
At the lower endpoint,
\[
 N(p_-(t),t)=\frac{8P(t)}{(t^2+30t+24)^2},
\]
where
\[
 \begin{split}
 P(t)={}&60t^8+461t^7+993t^6+2448t^5-4198t^4\\
 &-10356t^3+5400t^2+8784t-1728.
 \end{split}
\]
To prove $P(t)>0$ on $[3/5,7/5]$, put $u=t-1$. Then
\[
 \begin{split}
 P(1+u)={}&60u^8+941u^7+5900u^6+21447u^5+43272u^4\\
 &+36687u^3-120u^2-6371u+1864.
 \end{split}
\]
If $-2/5\leq u\leq0$, put $r=-u\in[0,2/5]$ and group the terms as
\[
 \begin{split}
 P(1-r)={}&60r^8+r^6(5900-941r)+r^4(43272-21447r)\\
 &+r(6371-36687r^2)+1864-120r^2
 \geq\frac{9224}{5}>0.
 \end{split}
\]
Each parenthesized coefficient is positive on $[0,2/5]$.
If $0\leq u\leq1/4$, all terms of degree at least three are
nonnegative, so
\[
 P(1+u)\geq1864-6371u-120u^2
 \geq1864-\frac{6371}{4}-\frac{120}{16}
 =\frac{1055}{4}>0.
\]
Finally, if $1/4\leq u\leq2/5$, then $u^3\geq u/16$, and hence
\[
 P(1+u)\geq1864-\frac{65249}{16}u-120u^2
 \geq1864-\frac{65249}{40}-\frac{96}{5}
 =\frac{8543}{40}>0.
\]
Thus $N$ is positive at both rational endpoints, and concavity in $p$
proves \eqref{eq:small-rational-bound}. The exponential bound gives
$\mathcal E(v_0)>0$; concavity in $\tau$ now proves
\eqref{eq:small-activation-bound} throughout $[v_0,w_0]$.
\end{proof}

\begin{theorem}[Optimality for $1<k<2$]\label{thm:small-optimality}
Let $1<k<2$ and let $\pi_k$ be the coupling of \cref{prop:small-k}.
Then, for every $h\in C^1([-3,3])$ with convex derivative,
\[
 V_k(h)=\int h(y-x)\,\pi_k(\dd x,\dd y).
\]
For $h\in C^3([-3,3])$ with $h'''\geq0$, the dual functions defined above satisfy
\eqref{eq:dual-inequality}, with equality on the support of $\pi_k$.
\end{theorem}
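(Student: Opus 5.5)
We follow the scheme of \cref{thm:optimality}. By the Taylor representation used there, the quadratic and affine parts have exact dual representations. Hence it suffices to prove the hinge inequality \eqref{eq:hinge-dual} for $h_\tau(r)=\tfrac12(r-\tau)_+^2$, $\tau\in[-3,3]$, using the dual functions built from the kernel of \cref{lem:small-kernel}. We then integrate against $h'''(\tau)\dd\tau$ and pass to $C^1$ costs through \eqref{eq:convex-approximation}. Equality at $D(x)$, $U(x)$ (or at $B(x)$, $U(x)$ for $x\geq b$) follows from the analogue of \cref{lem:contacts} already noted. The boundary sign \eqref{eq:boundary-sign} also needs checking. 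It follows from the support bound in \cref{lem:small-kernel} once we have a stochastic lower bound $\Unif[-k,w(x)]\leq_{\mathrm{st}}Q_x$. That bound is proved by the Gronwall argument of \cref{lem:stochastic-order}, now using \eqref{eq:small-window} together with $\zeta\leq0$ and $w\geq0$.

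Next we would rerun the sign analysis of $g_x=\partial_yG_x$ from the proof of \cref{lem:hinge}, using formula \eqref{eq:g-formula}. Two pieces carry over unchanged: the region outside $[D(x),U(x)]$, and the case $x\leq b$ inside it. The monotonicity of $e(y)=y-t(y)$ does not, because $w$ may fail to be monotone on $(x_0,b)$ (see the remark after \cref{prop:small-k}). On the upper branch the three regimes of the proof of \cref{lem:hinge} can then interleave. The only place where $g_x$ can switch from nonpositive back to nonnegative, and so create a negative interior minimum, is where $U(t)$ has $v(t)<\tau<w(t)$ with $t\in(x_0,b)$. This is the set $J_\tau$. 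The work here is to show that every negative global minimum of $G_x$ lies in $J_\tau$, and that $J_\tau$ is an open interval. The interval property follows since $v$ increases and $w$ is first decreasing, then increasing. For $x\leq b$ we would still get $G_x\geq0$ directly, because $t\leq x$ forces the favorable ordering. In particular $G_b\geq0$.

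For $x\in[b,1]$ we would apply \cref{lem:source-comparison} with $J=J_\tau$. If $J_\tau$ is empty there is nothing to do. Otherwise $v_0<\tau<w_0$ for some $\tau$, so $v_0<w_0$ and \cref{lem:small-dual-estimates} applies, giving $k<67/50$ and $U(b)<0$. We would take $x_*=\max\{b,y_0\}$, or the first $x$ at which $B(x)$ enters the $U_{\mathrm e}$ regime. Condition (i) on $J_\tau$ for $x\geq x_*$ comes from \eqref{eq:g-formula} with $t\leq x$. The integrand $S(s)-\1_{\{U(t)-s>\tau\}}$ is controlled by two inputs. On sources $s\geq b$ we use the tail bound $w(x)-\tau>(k-w(x))/4$ together with the stochastic lower bound, which gives $S(s)\geq(w-\tau)/(w+k)$. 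On $s\leq x_0$ we use the upper bound $Q_s\leq_{\mathrm{st}}\Unif[0,w(s)]$. Condition (ii) requires controlling $\partial_xG_x(y)=-h_\tau'(y-x)+\theta'(x)(y-x)+\ldots$ on $[b,x_*]$. We would show that it changes sign at most once, from positive to negative, so the minimum sits at an endpoint. This uses the Volterra identity \eqref{eq:Q-active} to express $S$. The activation bound \eqref{eq:small-activation-bound} then controls the growth of $Q_x((\tau,\infty))$: the exponential factor arises from solving the linear Volterra inequality $(w+k)S\geq(w-\tau)_++\int S$, whose rate is $1/(k+1)$.

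The main obstacle is the last step, (ii) of \cref{lem:source-comparison}. The difficulty is to turn the exponential activation estimate into a sign change of $\partial_xG_x(y)$ that occurs once and in the right direction, uniformly in $\tau\in[v_0,w_0]$ and $y\in J_\tau$. First I would differentiate the Volterra equation to obtain a first-order inequality for $S$. I would compare it with the explicit solution $\exp((x-\cdot)/(k+1))$ and check that at the crossover the accumulated mass exceeds $(w_0-\tau)/(\delta_0-\tau)$, which is exactly what \eqref{eq:small-activation-bound} provides.
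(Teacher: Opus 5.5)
Your reduction to $h_\tau$, the contact identities, and the localization of negative minima in $J_\tau$ follow the paper. The application of \cref{lem:source-comparison}, however, has a genuine gap. The threshold $x_*$ must depend on $\tau$, and you attach the two estimates of \cref{lem:small-dual-estimates} to the wrong conditions.

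\textbf{Condition~(i) fails at your threshold.} Near the left end of $J_\tau$, \eqref{eq:small-target-derivative} turns condition~(i) into $g_x(y_0)=F(x)-F(x_0)-(w_0-\tau)\geq0$. That is, the mass $\int_{x_0}^xS$ must already exceed $w_0-\tau$. Nothing guarantees this at your choices $x_*=\max\{b,y_0\}$ or $x_*=k-1$. Since $x_c=k+D(x_0)=k-1+\ell-v_0$, one has $(k-1)-x_c=v_0-\ell$. Integrating $H'\geq(\delta_0-\tau+H)/(k+1)$ up to $k-1$ therefore gives the exponent $(v_0-\ell)/(k+1)$, not the exponent $(\tau+v_0-\ell)/(k+1)$ of \eqref{eq:small-activation-bound}. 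In fact (i) does fail there. Take $k=11/10$ and $\tau=7/10\in(v_0,w_0)$, and bound the solution of the Volterra identity on $[b,k-1]$ from above. This gives $F(k-1)-F(x_0)<0.14<w_0-\tau\approx0.144$ (a hand estimate, not a proof). So $g_{k-1}<0$ on the part of $J_\tau$ next to $y_0$, and the same holds at $x=y_0$ because $F$ is nondecreasing.

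\textbf{The paper's threshold.} The paper takes $x_*=\alpha_*+k+\tau$, where $\alpha_*\in(-1,x_0)$ solves $w(\alpha_*)=\tau$; equivalently, $\zeta(x_*)=\tau$. Then $x_*-x_c\geq\tau+v_0-\ell$, and \eqref{eq:small-activation-bound} yields exactly $F(x_*)-F(x_0)\geq w_0-\tau$. Monotonicity of $F$ gives $g_x(y_0)\geq0$ for all $x\geq x_*$. The strict decrease of $F(t)+w(t)-\tau$ along $J_\tau$ then extends this to all of $J_\tau$. So the activation bound is the tool for (i), not for (ii).

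\textbf{Condition~(ii) needs a different argument.} It is not obtained from the activation bound. The paper shows that $K(\cdot,y)=\partial_xG_{\cdot}(y)$ never changes sign from negative to positive on $[b,x_*]$.
\begin{itemize}
\item On $[b,y_0]$, $K\geq0$ follows directly from $Q_x\leq_{\mathrm{st}}\Unif[0,w(x)]$, which is valid because $y_0<k-1$.
\item On $[y_0,x_*]$, differentiate the Volterra identity. At a zero of $K$ with $y-x<\tau$ this gives $c\,\partial_xK=\Delta\rho(1-S)-(c+R)S+R\Lambda$. This is negative because $w-\tau>(k-w)/4$ and $\Lambda\leq0$.
\end{itemize}
The sign $\Lambda\leq0$ holds only while $(\zeta-\tau)_+$ vanishes, that is, up to $x_*$. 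Beyond that point $\Lambda=1-(1-S(\alpha))\alpha'>0$. Thus $x_*$ is where this argument for (ii) stops working and where (i) is guaranteed. Your $\tau$-independent threshold lies below $x_*$ and sacrifices (i).

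\textbf{Two smaller issues.}
\begin{itemize}
\item Your Gronwall proof of $\Unif[-k,w(x)]\leq_{\mathrm{st}}Q_x$ uses $\zeta\leq0$. But $\zeta(x)=w(\alpha(x))>0$ for $x>k-1$. When $v_0<w_0$, $w$ also dips below $w_0$ on $(x_0,b)$, so the step $u_f(w(s))\geq u_f(\zeta(x))$ can fail. This bound is not needed anyway: $G_x(B(x))=0$, and every negative minimum lies in $J_\tau$.
\item For $x\leq b$, the ordering argument of \cref{lem:hinge} breaks for the same reason, namely that $w$ is non-monotone. So ``$t\leq x$'' is not enough. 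The paper instead integrates $K(s,y)\geq0$ from the contact source $t$ up to $x$. This inequality holds because $S(s)$ is the chord slope of $(r-\tau)_+$ over $[d(s),w(s)]$.
\end{itemize}
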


\begin{proof}
It suffices to prove the dual inequality for
$h_\tau(r)=\tfrac12(r-\tau)_+^2$.
Write
\begin{gather*}
 f(r)=(r-\tau)_+,\qquad S(x)=Q_x((\tau,\infty)),\qquad
 F(x)=\int_{-1}^xS(s)\dd s,\\
 g_x(y)=\partial_yG_x(y),\qquad K(x,y)=\partial_xG_x(y).
\end{gather*}
The Volterra equation and the contact calculation give
\begin{equation}\label{eq:small-S-gap}
\begin{split}
 (k+w)S&=(w-\tau)_+-(\zeta-\tau)_+
               +F(x)-F(\alpha(x)),\\
 K(x,y)&=f(w)-f(y-x)-S(x)(U(x)-y).
\end{split}
\end{equation}
The first identity holds for $x\in[b,1]$. The second follows by
differentiating the contact equality on each smooth interval with
$x>-1$ and then extending by continuity at the transitions. At $x=-1$
we use its integrated form. All further differential identities are
understood on the smooth intervals and extended by integration.

\medskip\noindent\emph{1. Sources $x\in[-1,b]$ and targets outside $[D(x),U(x)]$.}
For $x\in[-1,b]$, the intervals $[D(x),U(x)]$ increase with $x$.
Fix $y\in[D(x),U(x)]$, and let $t\leq x$ be the unique source with
$D(t)=y$ if $y\leq-1$, or $U(t)=y$ if $y\geq-1$.
Then $G_t(y)=0$ and $y\in[D(s),U(s)]$ for $t\leq s\leq x$.
On $[-1,b]$, $S$ is the chord slope of the convex function $f$
on $[d,w]$. Thus, for $r=y-s\in[d(s),w(s)]$,
$f(r)\leq f(w)-S(w-r)$, and \eqref{eq:small-S-gap} gives
$K(s,y)\geq0$. Integration from $t$ to $x$ proves $G_x(y)\geq0$.
The sign estimates outside $[D(x),U(x)]$ are those of
\cref{lem:hinge}; for $y>U(x)$ they use the support bound in
\cref{lem:small-kernel}. This proves the inequality for every
$x\in[-1,b]$ and for $y\geq U(x)$ when $x\in[b,1]$.

\medskip\noindent\emph{2. Location of a negative minimum for $x\in[b,1]$.}
Fix $x\in[b,1]$ and $B(x)<y<U(x)$. Define
\[
 t=
 \begin{cases}
 D_{\mathrm f}^{-1}(y),&y\leq-1,\\
 U^{-1}(y),&y\geq-1.
 \end{cases}
\]
Both expressions give $t=-1$ at $y=-1$, and $t<x$.
Formula \eqref{eq:g-formula} becomes
\begin{equation}\label{eq:small-target-derivative}
 g_x(y)=F(x)-F(t)-(y-t-\tau)_+ +(y-x-\tau)_+.
\end{equation}
Put $e=y-t$ and $r=y-x$. If $\tau\leq0$, the regions
$e\leq\tau$, $r<\tau<e$, and $r\geq\tau$ occur in this order as $y$
increases: on the lower branch $e=d(t(y))$ increases, while on the
upper branch $e=w(t)\geq0\geq\tau$.
The sign argument of \cref{lem:hinge} then uses the following
derivative estimate in place of monotonicity of $w$:
for $t\in[x_0,b]$ with $d(t)<\tau<w(t)$,
\[
 (F+w)'=S+w'=\frac{v-\tau}{v+w}>0,
\]
and when $\tau\leq d$, it equals $U'>0$.
On $(-1,x_0)$ and $(b,1)$, $w'>0$.
Consequently $g_x$ can change sign only from positive to negative.
Since $G_x$ vanishes at $B(x)$ and $U(x)$, it is nonnegative between
these points. If $\tau\geq1$, then $S=0$ and the
normalized dual functions $\theta,\psi,\phi$ all vanish. Nonnegativity
of $h_\tau$ gives the dual inequality directly.
It remains to consider $0<\tau<1$.
If $e\leq\tau$, then
$g_x(y)=F(x)-F(t)\geq0$; if $r\geq\tau$, then
$g_x(y)=\int_t^x(S-1)\dd s<0$.
The inequality is strict: $Q_s((-\infty,0))>0$ for every $s>-1$.
For $s\leq b$ this follows from $d(s)<0$. For $s>b$, the integral in
\eqref{eq:Q-active} includes $(\alpha(s),b)$, where the kernels
have positive mass on $(-\infty,0)$. Hence $S(s)<1$.
If $r<\tau<e$, then $y=U(t)$, since $y=D(t)$ would give
$e=d(t)<0<\tau$. In this case,
\[
 g_x(U(t))=F(x)-F(t)-w(t)+\tau.
\]
On $(-1,x_0)$ and $(b,1)$, $(F+w)'>0$.
For $t\in(x_0,b)$, the identity $U'=2v/(v+w)$ gives
\begin{equation}\label{eq:small-middle-curvature}
 (F+w)'=\frac{v-\tau}{v+w},\qquad
 \partial_y g_x(U(t))=\frac{\tau-v(t)}{2v(t)}.
\end{equation}
Equation \eqref{eq:small-middle-curvature} implies that a negative
minimum away from the transition targets can occur only at $y=U(t)$
with $x_0<t<b$ and
\[
 v(t)<\tau<w(t).
\]
These are exactly the targets in $J_\tau$. Since $v$ increases
and $w$ decreases until $v=w$, the set is empty unless
$v_0<\tau<w_0$; otherwise it is an open interval with left endpoint
$y_0=U(x_0)=1-k$. Once either inequality fails, it cannot hold again.
Moreover, $F(t)+w(t)-\tau$ strictly decreases as $U(t)$ runs
through $J_\tau$.

We must also exclude the boundaries of the preceding cases. If
$e\leq\tau$ and $g_x(y)=0$, then $S=0$ almost everywhere between
$t$ and $x$. Equations \eqref{eq:Q-free}--\eqref{eq:Q-active} then force
$w(s)\leq\tau$ almost everywhere on this interval: for $s>b$,
the nonnegative Lebesgue term has endpoints $\zeta(s)<w(s)$,
and for $s\leq b$ the kernel is uniform on $[d(s),w(s)]$.
Also $y-s\leq y-t=e\leq\tau$. Thus $K(s,y)=0$ almost everywhere,
and integration from the contact at $t$ gives $G_x(y)=G_t(y)=0$.
This also excludes a negative minimum when $e=\tau$ or when $g_x$
vanishes on an interval.

For the transition targets with $r<\tau<e$, use the one-sided
signs of $g_x$. At $y_0$, the derivative $g_x$ is strictly decreasing
immediately to the left; if $g_x(y_0)=0$, the gap has smaller values
on that side. At $U(b)$, the same argument uses strict decrease of
$g_x$ immediately to the right. At an endpoint with $v(t)=\tau$,
$g_x$ first increases and then decreases. If it vanishes at that
endpoint, it is negative on the right, so the gap again has smaller
nearby values. None of these points can be a negative minimum.
Together with the strict sign for $r\geq\tau$, these observations
show that every negative global minimum of $G_x$ lies in $J_\tau$.
In particular, $J_\tau=\varnothing$ implies $G_x\geq0$.

\medskip\noindent\emph{3. The minimum in $x$ on $[b,x_*]$.}
Suppose now that $v_0<\tau<w_0$.
Let $\alpha_*\in(-1,x_0)$ be the unique solution of
$w(\alpha_*)=\tau$, and put
\[
 x_*=\alpha_*+k+\tau.
\]
Then $x_*\in(k-1,1)$ and $\zeta(x_*)=\tau$.
For $b<x<x_*$ define, wherever the derivative exists,
\[
 \Lambda(x)=\frac{\dd}{\dd x}
 \{(\zeta(x)-\tau)_++F(\alpha(x))\}.
\]
For $b<x<k-1$, $\Lambda=S(\alpha)\alpha'\leq0$.
For $k-1<x<x_*$, both $(\zeta-\tau)_+$ and $S(\alpha)$ vanish.
Hence $\Lambda\leq0$ wherever it is defined.

Fix $y\in J_\tau$. Its definition gives
$J_\tau\subset[B(1),U(b)]$, and \cref{lem:small-dual-estimates}
gives $b<y_0<0<x_*$. For $b\leq x\leq y_0$, put $r=y-x\geq0$.
Since $x\leq y_0<k-1$, \cref{lem:small-kernel} gives
$S(x)\leq(w-\tau)_+/w$. If $r\geq\tau$, then
$K=(w-r)(1-S)\geq0$; if $0\leq r<\tau<w$, then
$K\geq(w-\tau)r/w\geq0$. If $w\leq\tau$, both $S$ and $K$ vanish.
Thus $K(x,y)\geq0$ for $b\leq x\leq y_0$.
For $y_0\leq x\leq x_*$, \cref{lem:small-dual-estimates} gives
$M:=w-\tau>(k-w)/4>0$.
At a zero of $K(\cdot,y)$ with $y-x<\tau$, put
$R=U(x)-y>0$, $c=k+w$, and $\Delta=c-R=y-B(x)>0$.
Then $S=M/R$. Differentiating the first identity in
\eqref{eq:small-S-gap}, with $\rho=w'=(k-w)/(k+w)$, gives
\[
 cS'=\rho+(1-\rho)S-\Lambda,\qquad
 c\,\partial_x K
 =\Delta\rho(1-S)-(c+R)S+R\Lambda.
\]
Since $\Lambda\leq0$, the latter is strictly negative if
\[
 M>\frac{(k-w)R\Delta}{2k(k+w)+2wR}.
\]
Indeed,
\[
 \frac{(k-w)R\Delta}{2kc+2wR}
 \leq\frac{(k-w)c}{8k}\leq\frac{k-w}{4},
\]
using $R+\Delta=c$, $R\Delta\leq c^2/4$, and $c\leq2k$.
Therefore $\partial_xK<0$ at each such zero.
When $y-x\geq\tau$,
$K=(U-y)(1-S)\geq0$, and $x\leq y-\tau$.
The function $K$ is continuous and piecewise continuously differentiable;
at a transition zero the same strict bound applies to both one-sided
derivatives. Thus $K(\cdot,y)$ cannot change sign from negative
to positive. Hence $G_x(y)$ attains its minimum over $[b,x_*]$ at an
endpoint:
\begin{equation}\label{eq:small-source-minimum}
 G_x(y)\geq\min\{G_b(y),G_{x_*}(y)\},
 \qquad b\leq x\leq x_*,\quad y\in J_\tau.
\end{equation}

\medskip\noindent\emph{4. Nonnegativity of $\partial_yG_x$ for $x\geq x_*$.}
Set $x_c=z(\delta_0)=k+D(x_0)$.
Then $b<x_c<k-1<x_*$.
For $x_c\leq x\leq x_*$ one has
$\alpha(x)\leq x_0$, $w(x)\geq\delta_0$, and $\zeta(x)\leq\tau$.
Consequently, with $H(x)=F(x)-F(x_0)\geq0$,
\[
 H'(x)=S(x)\geq\frac{\delta_0-\tau+H(x)}{k+1}.
\]
The differential inequality and $H(x_c)\geq0$ yield
\[
 H(x_*)\geq(\delta_0-\tau)
 \left[\exp\left(\frac{x_*-x_c}{k+1}\right)-1\right].
\]
Moreover
$x_*-x_c=\alpha_*-x_0+\tau+v_0\geq\tau+v_0-\ell$.
By \eqref{eq:small-activation-bound},
$F(x_*)-F(x_0)\geq w_0-\tau$.
Now $x_*>0>U(b)$, and \eqref{eq:small-target-derivative} gives
\[
 g_x(y_0)=F(x)-F(x_0)-(w_0-\tau)\geq0
 \quad\text{for }x\geq x_*.
\]
Since $F(t)+w(t)-\tau$ decreases along $J_\tau$, the same inequality
holds throughout $J_\tau$.
Thus condition~(i) of \cref{lem:source-comparison} holds, while
\eqref{eq:small-source-minimum} is condition~(ii). Step~1 gives
$G_b\geq0$, and Step~2 locates every possible negative minimum in
$J_\tau$. The comparison lemma therefore yields $G_x\geq0$ for
all $x\in[b,1]$.

The dual inequality now holds on the whole effective domain for each
$h_\tau$. As in the proof of \cref{thm:optimality}, superposition
proves it for smooth $h$ with $h'''\geq0$, because quadratic
terms have the same integral under every admissible coupling.
Uniform approximation extends optimality to $h\in C^1([-3,3])$
with convex derivative.
\end{proof}

\subsection{Two-source comparisons and a counterexample}
\label{sec:gamma-monotonicity}

We state the necessary optimality condition for variations of two
conditional laws and prove that the support of $\pi_k$ is
$\Gamma_k$-left-monotone. Without the constraint, left-monotonicity
singles out the left-curtain coupling and is equivalent to optimality
for martingale Spence--Mirrlees costs
\cite{BeiglbockJuillet,BeiglbockHenryLabordereTouzi}. The constraint
relaxes this condition to $\Gamma_k$-left-monotonicity, which is no
longer sufficient: a discrete counterexample (\cref{ex:counter}) shows
that it does not imply optimality, even together with every two-source
comparison. We take this as an indication that the constrained problem
is genuinely harder than its unconstrained counterpart, for which the
dual inequality of \cref{sec:optimality-limitations} would be
unnecessary.

For an optimal coupling
$\pi(\dd x,\dd y)=\mu(\dd x)\pi_x(\dd y)$,
\cite[Theorem~6.1]{BayraktarZhangZhou} gives a Borel set $\Lambda$
with $\mu(\Lambda)=1$ and the following property. If $x,x'\in\Lambda$
and probability laws $m_x,m_{x'}$ have means $x,x'$, supports in
$[x-k,\infty)$ and $[x'-k,\infty)$, and satisfy
\[
 m_x+m_{x'}=\pi_x+\pi_{x'},
\]
one has
\begin{equation}\label{eq:pairwise-comparison}
 \begin{aligned}
 &\int h(y-x)\pi_x(\dd y)+\int h(y-x')\pi_{x'}(\dd y)\\
 &\hspace{1cm}\leq
   \int h(y-x)m_x(\dd y)+\int h(y-x')m_{x'}(\dd y).
 \end{aligned}
\end{equation}
The uniqueness result in
\cite[Proposition~6.2]{BayraktarZhangZhou} requires a source marginal
supported on two points; two-point conditional laws do not meet that
assumption.

Following \cite[Definition~6.1]{BayraktarZhangZhou}, a set
$S\subset\Gamma_k$ is $\Gamma_k$-left-monotone if it contains no points
\[
 (x,y^-),(x,y^+),(x',y')\in S
\]
such that $x<x'$, $y^-<y'<y^+$, and $y^-\geq x'-k$.
The last inequality is exactly the additional condition ensuring that
\[
 (x,y'),\qquad (x',y^-),\qquad (x',y^+)
\]
all belong to $\Gamma_k$.

\begin{proposition}\label{prop:Gamma-left}
For every $1<k<3$, the support of the coupling $\pi_k$ constructed
above is $\Gamma_k$-left-monotone.
\end{proposition}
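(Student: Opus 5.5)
The support of $\pi_k$ is the closure of the two graphs $\{(x,D(x))\}$ and $\{(x,U(x))\}$ over $[-1,1]$. The plan is to take a forbidden triple and force a contradiction. So suppose there are points $(x,y^-),(x,y^+),(x',y')$ in the support with
\[
x<x',\qquad y^-<y'<y^+,\qquad y^-\geq x'-k .
\]
Since $y^-<y^+$, necessarily $x>-1$, $y^-=D(x)$ and $y^+=U(x)$. The point $y'$ is either $D(x')$ or $U(x')$. The argument splits according to whether $x$ and $x'$ lie in $[-1,b]$ or in $[b,1]$, using the monotonicity properties from \cref{prop:matching} (for $2\le k<3$) and \cref{prop:small-k} (for $1<k<2$). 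These say that $D$ is strictly decreasing on $[-1,b]$ and equals $x-k$ on $[b,1]$, that $U$ is strictly increasing on $[-1,1]$, and that $D(s)>s-k$ for $s<b$.

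First, $y'=U(x')$ is impossible, because $U(x')>U(x)=y^+$. Hence $y'=D(x')$, and the condition $D(x)<D(x')$ must fail in every case.

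If $x<x'\le b$, then $D(x')<D(x)$, which contradicts $y^-<y'$.

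If $x\ge b$, then $D(x)=x-k<x'-k$, which contradicts $y^-\ge x'-k$. This case is exactly where the $\Gamma_k$ restriction is used: the unconstrained left-monotone condition fails there, because $D$ increases on $[b,1]$.

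The remaining case is $x<b<x'$. Here $y'=D(x')=x'-k\le y^-$ by the hypothesis $y^-\geq x'-k$. This contradicts $y^-<y'$ strictly.

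Points of the closure of the graphs coincide with graph points because $D$ and $U$ are continuous, so this covers all of the support. At $x=-1$ the two graphs meet, so no pair $y^-<y^+$ arises there.

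The expected obstacle is only the bookkeeping at the transition points ($x=b$, and $x_0$ for $1<k<2$) and at the degenerate endpoint $x=-1$. Nothing analytic is needed beyond the monotonicity already proved. The main thing to confirm is that $D$ is decreasing on all of $[-1,b]$, including across $x_0$ for small $k$, which is established in \cref{prop:small-k}.
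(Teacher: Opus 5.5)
Your proof is correct and follows the paper's argument. Crossings on the upper graph are excluded because $U$ is increasing, and lower-graph crossings with $x'\le b$ are excluded because $D$ is decreasing on $[-1,b]$. For $x'>b$, the value $y'=x'-k$ together with $y^-\geq x'-k$ contradicts $y^-<y'$.

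Your two subcases $x\ge b$ and $x<b<x'$ are both instances of this last observation, so they merge into one case as in the paper. The $\Gamma_k$ restriction is used in both of them, not only in the first as your remark suggests.
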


\begin{proof}
Fix $x<x'$. Two distinct support values at $x$ must be $D(x)$ and
$U(x)$. If $x'<b$, monotonicity on $[-1,b]$ gives
$D(x')\leq D(x)$ and $U(x')\geq U(x)$, so neither support value at
$x'$ lies strictly between $D(x)$ and $U(x)$. If
$x'\geq b$, the inequality $U(x')\geq U(x)$ again excludes a crossing on
the upper graph.  It must therefore be $y'=D(x')=x'-k$.  If
$D(x)<y'<U(x)$, then $(x',D(x))\notin\Gamma_k$ because
$D(x)<x'-k$. Hence $\Gamma_k$-left-monotonicity holds.
\end{proof}

\begin{example}[Two-source optimality does not imply optimality]\label{ex:counter}
Let $k=3$,
\[
 \mu_0=\frac14(\delta_0+\delta_1+\delta_2+\delta_3)
\]
and
\[
 \nu_0=\frac14\delta_{-2}+\frac16\delta_1+\frac38\delta_2
       +\frac18\delta_4+\frac1{12}\delta_7.
\]
Define the conditional laws
\begin{align*}
 P_0&=\tfrac13\delta_{-2}+\tfrac23\delta_1,&
 P_1&=\tfrac23\delta_{-2}+\tfrac13\delta_7,&
 P_2&=\delta_2,&
 P_3&=\tfrac12\delta_2+\tfrac12\delta_4,\\
 Q_0&=\tfrac12\delta_{-2}+\tfrac12\delta_2,&
 Q_1&=\tfrac12\delta_{-2}+\tfrac12\delta_4,&
 Q_2&=\delta_2,&
 Q_3&=\tfrac23\delta_1+\tfrac13\delta_7.
\end{align*}
Write $P(\dd x,\dd y)=\mu_0(\dd x)P_x(\dd y)$ and similarly for $Q$.
Each displayed conditional law has mean equal to its source, and its
support lies above $y=x-3$. Averaging the four conditional laws
gives $\nu_0$ for both couplings. The only possible strict crossings
in the support of $P$ involve the interval
$(-2,7)$ at source $x=1$ and a later value $2$ or $4$.  Moving its lower
endpoint $-2$ to either later source $x'=2$ or $x'=3$ violates
$-2\geq x'-3$.  Hence the support of $P$ is
$\Gamma_3$-left-monotone.

To verify \eqref{eq:pairwise-comparison} for $P$, let
$h\in C^1(\R)$ with convex derivative. Fix distinct sources $i,j$ and
probability laws $m_i,m_j$ of means $i,j$, supported on
$[i-3,\infty)$ and $[j-3,\infty)$, with $m_i+m_j=P_i+P_j$.
Both laws are supported on $\supp(P_i+P_j)$.
Except for the pair $(0,1)$, the constraints force the original laws:
for $(0,2)$, the largest available target is $2$, so $m_2=\delta_2$;
for $(2,3)$, it is the smallest available target, with the same conclusion.
For $(0,3)$, at most mass $1/2$ is available at $4$, and every other
target is at most $2$. The mean of $m_3$ is therefore at most
$\tfrac12\,4+\tfrac12\,2=3$, and equality forces
$m_3=\tfrac12\delta_2+\tfrac12\delta_4$.
For $(1,2)$ and $(1,3)$, the later source cannot receive $-2$.
Thus $m_1$ retains mass $2/3$ at $-2$. Its remaining mass is $1/3$ and
must have conditional mean $7$, the largest available target, so
$m_1=P_1$.
In each case, preservation of the sum determines the other law.

For $(0,1)$, set $t=m_0(\{7\})$. The mass and mean constraints determine
the other coefficients, and nonnegativity gives precisely
\begin{align*}
 m_0&=(\tfrac13+2t)\delta_{-2}
           +(\tfrac23-3t)\delta_1+t\delta_7,\\
 m_1&=(\tfrac23-2t)\delta_{-2}
           +3t\delta_1+(\tfrac13-t)\delta_7,
 \qquad 0\leq t\leq\tfrac29.
\end{align*}
Put
\[
 g(y):=h(y)-h(y-1)=\int_0^1 h'(y-s)\dd s.
\]
The convexity of $h'$ implies that $g$ is convex. The change in the sum
of the two conditional costs is
\[
 t\,[2g(-2)-3g(1)+g(7)]\geq0,
\]
because $1=\tfrac23(-2)+\tfrac13\,7$.
Comparisons at the same source give equality by linearity.
This proves every instance of \eqref{eq:pairwise-comparison}.

Nevertheless, for $h(z)=z^3$, whose derivative is strictly convex,
\begin{align*}
  \mathbb E^P[h(Y-X)]&=\frac{-2+54+0+0}{4}=13,\\
  \mathbb E^Q[h(Y-X)]&=\frac{0+0+0+16}{4}=4.
\end{align*}
Since $P_2=Q_2$, only the conditional laws at $0,1,3$ change.
Thus a $\Gamma_3$-left-monotone coupling satisfying every two-source
comparison can fail to minimize the cost. The example uses discrete
marginals, distinct from the uniform marginals of the main problem.
\end{example}

\section{Numerical examples}
\label{sec:numerics}

The Python code for the computations in this section is available at
\url{https://github.com/ebayr/constrained-martingale-transport}.
The repository includes a README with dependencies and instructions for
reproducing the numerical values and figure data. Script paths below
refer to this repository.

We evaluate the maps in \cref{sec:construction} for $k=3/2$ and $5/2$,
and compute $V_k(z^3)$ as $k$ varies. The script
\path{generate_data.py} uses the principal Lambert function
for $U|_{[b,1]}$, bisection for the scalar inversions, and composite
Simpson quadrature for expectations. The root of $T(\delta_0)=0$
is bracketed by \cref{lem:small-k-transition}.
The script \path{generate_explanatory_figures.py} produces
\cref{fig:predecessor-geometry,fig:comparison-geometry}.
\Cref{sec:numerics-lp} compares the formulas with a discrete linear program.

\subsection{Two representative constrained couplings}

We evaluate the constructions at $k=3/2$ and $k=5/2$, compare their
support maps with the unconstrained maps, and check the marginal and
martingale conditions numerically.

For $k=3/2$, the scalar construction gives
\[
 \begin{gathered}
 b=-0.5,\qquad
 \delta_b=\KOneFiveDeltaB,\qquad
 \delta_0=\KOneFiveDeltaZero,\\
 x_0=\KOneFiveXZero,\qquad
 \delta_*=\KOneFiveDeltaStar.
 \end{gathered}
\]
The source intervals $[-1,x_0]$, $[x_0,b]$, and $[b,1]$ have
$\mu$-masses $0.0520070$, $0.1979930$, and $0.75$, respectively.
For
$k=5/2$, one has
\[
 a=0,\qquad b=0.5,\qquad \eta_b=\KTwoFiveEtaB.
\]
The intervals $[-1,a]$, $[a,b]$, and $[b,1]$ have $\mu$-masses
$1/2$, $1/4$, and $1/4$, respectively. Optimality follows from
\cref{thm:small-optimality,thm:optimality}.

\Cref{fig:support-examples} compares these maps with
$D_{\lc},U_{\lc}$ from \eqref{eq:ordinary-maps}.

\begin{figure}[t]
\centering
\begin{tikzpicture}
\begin{groupplot}[
 group style={group size=2 by 1,horizontal sep=1.05cm},
 width=0.47\textwidth,
 height=0.38\textwidth,
 xmin=-1.05,xmax=1.05,
 ymin=-2.1,ymax=2.1,
 xlabel={$x$},
 ylabel={$y$},
 tick label style={font=\scriptsize},
 label style={font=\small},
 title style={font=\small},
 grid=major,
 grid style={black!8}
]
\nextgroupplot[title={Optimal coupling, $k=3/2$}]
\addplot[support diagonal] coordinates {(-1,-1) (1,1)};
\addplot[support obstacle] table[x=x,y=B] {support_k15.dat};
\addplot[ordinary lower,domain=-1:1,samples=101] {-x/2-3/2};
\addplot[ordinary upper,domain=-1:1,samples=101] {3*x/2+1/2};
\addplot[constrained lower] table[x=x,y=D] {support_k15.dat};
\addplot[constrained upper] table[x=x,y=U] {support_k15.dat};
\addplot[support seam] coordinates {(\KOneFiveXZero,-2.08) (\KOneFiveXZero,2.08)};
\addplot[support seam] coordinates {(-0.5,-2.08) (-0.5,2.08)};
\node[font=\scriptsize,anchor=north west] at (axis cs:\KOneFiveXZero,2.04) {$x_0$};
\node[font=\scriptsize,anchor=north west] at (axis cs:-0.5,2.04) {$b$};

\nextgroupplot[
 title={Optimal coupling, $k=5/2$},
 legend to name=supportlegend,
 legend columns=3,
 legend style={font=\scriptsize,draw=none,column sep=0.75em},
 legend cell align=left
]
\addplot[support diagonal] coordinates {(-1,-1) (1,1)};
\addlegendentry{$y=x$}
\addplot[support obstacle] table[x=x,y=B] {support_k25.dat};
\addlegendentry{$y=x-k$}
\addplot[ordinary lower,domain=-1:1,samples=101] {-x/2-3/2};
\addlegendentry{$D_{\lc}(x)$}
\addplot[ordinary upper,domain=-1:1,samples=101] {3*x/2+1/2};
\addlegendentry{$U_{\lc}(x)$}
\addplot[constrained lower] table[x=x,y=D] {support_k25.dat};
\addlegendentry{constrained $D(x)$}
\addplot[constrained upper] table[x=x,y=U] {support_k25.dat};
\addlegendentry{constrained $U(x)$}
\addplot[support seam] coordinates {(0,-2.08) (0,2.08)};
\addplot[support seam] coordinates {(0.5,-2.08) (0.5,2.08)};
\node[font=\scriptsize,anchor=north west] at (axis cs:0,2.04) {$a$};
\node[font=\scriptsize,anchor=north west] at (axis cs:0.5,2.04) {$b$};
\end{groupplot}
\end{tikzpicture}
\par\smallskip
\pgfplotslegendfromname{supportlegend}
\caption{The maps $D,U$ for $k=3/2$ and $5/2$, compared with the ordinary
left-curtain maps. Red solid curves with circles represent $D$; blue
dashed curves with squares represent $U$. The black lines are $y=x$
and $y=x-k$. Vertical lines mark the source values where the formulas
change. Optimality holds for every $C^1$ cost with convex derivative
by \cref{thm:small-optimality,thm:optimality}.}
\label{fig:support-examples}
\end{figure}

For $k=5/2$, if $y=E(\eta)+\eta-5\in[-2,-3/2]$, it receives mass both
from $D|_{[-1,b]}$ and $D|_{[b,1]}$. Their respective target densities are
\begin{equation}\label{eq:density-decomposition}
 r_{\mathrm m}(y)=\frac{E(\eta)}{4(5-E(\eta))},
 \qquad
 r_{\mathrm b}(y)=\frac{5/2-E(\eta)}{2(5-E(\eta))},
 \qquad r_{\mathrm m}(y)+r_{\mathrm b}(y)=\frac14.
\end{equation}
\Cref{fig:marginal-checks} plots $r_{\mathrm m}$, $r_{\mathrm b}$, and their sum.

\begin{figure}[t]
\centering
\begin{tikzpicture}
\begin{axis}[
 width=0.65\textwidth,
 height=0.32\textwidth,
 xlabel={$y$},
 tick label style={font=\scriptsize},
 label style={font=\small},
 title style={font=\small},
 grid=major,
 grid style={black!8},
 title={Lower-density decomposition, $k=5/2$},
 xmin=-2,xmax=-1.5,
 ymin=0,ymax=0.27,
 ylabel={density contribution},
 legend style={at={(0.5,-0.27)},anchor=north,legend columns=3,
               font=\scriptsize,draw=none,fill=none}
]
\addplot[BrickRed,very thick,solid,mark=square*,mark repeat=50,
         mark size=1.25pt]
 table[x=y,y=middle] {density_decomposition_k25.dat};
\addlegendentry{$r_{\mathrm m}$}
\addplot[MidnightBlue,very thick,densely dashed,mark=*,mark repeat=50,
         mark size=1.25pt]
 table[x=y,y=boundary] {density_decomposition_k25.dat};
\addlegendentry{$r_{\mathrm b}$}
\addplot[black,dotted] table[x=y,y=total] {density_decomposition_k25.dat};
\addlegendentry{sum $=1/4$}
\end{axis}
\end{tikzpicture}
\caption{The target densities contributed by $D|_{[-1,b]}$ and
$D|_{[b,1]}$ on $[-2,-3/2]$ for $k=5/2$. Their sum equals $1/4$
by \eqref{eq:density-decomposition}.}
\label{fig:marginal-checks}
\end{figure}

As checks of the implementation, both representative cases have sampled
errors in the martingale and support conditions below $7\cdot10^{-16}$, and computed
second moments agreeing with $\mathbb E[Y^2]=4/3$ to nine decimal places.
Using $20{,}000$ midpoint source nodes gives target CDF errors below
$2\cdot10^{-5}$; with $80{,}000$ nodes and $64$ equal target bins, the
histogram $L^1$ errors are below $2\cdot10^{-4}$.

\subsection{The cost of the constraint}

We quantify the effect of the bound $Y\geq X-k$ by computing the minimum
cubic cost $V_k(z^3)$ and the probability $\pi_k\{Y=X-k\}$ as $k$ varies.

For $h(z)=z^3$, the derivative $h'(z)=3z^2$ is convex, so the values in the
second column of \cref{tab:large-numerics} are optimal for $1<k<3$ by
\cref{thm:small-optimality,thm:optimality}. The endpoints $k=1$ and
$k=3$ follow from \cref{prop:unit-optimality,prop:ordinary}, respectively.
The last column gives the probability of $Y=X-k$,
\[
 \pi_k\{Y=X-k\}=\frac12\int_b^1(1-q_k(x))\dd x.
\]
The row at $k=1$ is exact: the minimum cubic cost is zero and half the
mass is sent to $Y=X-1$. The script
\path{generate_comparison_table.py} generates the remaining rows
directly from the coupling formulas in both constrained regimes.
Doubling the Simpson panels from $2{,}400$ to $4{,}800$ on each source
interval leaves all displayed digits unchanged.

\begin{table}[t]
\centering
\caption{The minimum cubic cost $V_k(z^3)$ and probability
$\pi_k\{Y=X-k\}$, rounded to four decimal places.}
\label{tab:large-numerics}
\small
\begin{tabular}{@{}rrr@{}}
\toprule
$k$ & Minimum cost $V_k(z^3)$ & $\pi_k\{Y=X-k\}$\\
\midrule
1.00 & 0.0000 & 0.5000\\
\ComparisonRows
\bottomrule
\end{tabular}
\end{table}

As $k$ increases to $3$, the value decreases to the unconstrained
value $-3/2$. The feasible coupling $Y=X\pm1$ has cubic cost zero
for every $k\geq1$. \Cref{fig:value-and-mass} also compares
$\pi_k\{X\geq b\}=(3-k)/2$ with $\pi_k\{Y=X-k\}$.

\begin{figure}[t]
\centering
\begin{tikzpicture}
\begin{groupplot}[
 group style={group size=2 by 1,horizontal sep=1.15cm},
 width=0.47\textwidth,
 height=0.36\textwidth,
 xmin=2,xmax=3,
 xlabel={$k$},
 tick label style={font=\scriptsize},
 label style={font=\small},
 title style={font=\small},
 grid=major,
 grid style={black!8},
 legend style={font=\scriptsize,fill=white,fill opacity=0.88,text opacity=1}
]
\nextgroupplot[
 title={Cubic displacement cost},
 ymin=-1.55,ymax=0.08,
 ylabel={$\mathbb E[(Y-X)^3]$},
 legend pos=north east
]
\addplot[MidnightBlue!90!black,very thick,solid,mark=o,mark repeat=10,
         mark size=1.15pt,mark options={solid,draw=black,fill=white}]
 table[x=k,y=value] {cubic_value.dat};
\addlegendentry{$V_k(z^3)$}
\addplot[BrickRed,densely dashed] coordinates {(2,-1.5) (3,-1.5)};
\addlegendentry{unconstrained value}
\addplot[black,dotted] coordinates {(2,0) (3,0)};
\addlegendentry{two-shift value}

\nextgroupplot[
 title={Source mass and mass on the constraint},
 ymin=0,ymax=0.52,
 ylabel={mass},
 legend pos=north east
]
\addplot[BrickRed!85!black,very thick,solid,mark=o,mark repeat=10,
         mark size=1.15pt,mark options={solid,draw=black,fill=white}]
 table[x=k,y=active_source_mass] {cubic_value.dat};
\addlegendentry{$\mathbb P(X\geq b)$}
\addplot[MidnightBlue!90!black,very thick,densely dashed,mark=square,
         mark repeat=10,mark phase=5,mark size=1.05pt,
         mark options={solid,draw=black,fill=white}]
 table[x=k,y=boundary_mass] {cubic_value.dat};
\addlegendentry{$\mathbb P(Y=X-k)$}
\end{groupplot}
\end{tikzpicture}
\caption{Left: $V_k(z^3)$ for $2\leq k\leq3$, with the unconstrained value
$-3/2$ and the two-shift value $0$. Right: the probabilities
$\pi_k\{X\geq b\}$ and $\pi_k\{Y=X-k\}$. The former includes
transport to both $D(x)$ and $U(x)$ for $x\in[b,1]$. Both vanish
at $k=3$.}
\label{fig:value-and-mass}
\end{figure}

\subsection{Independent finite-dimensional optimization}
\label{sec:numerics-lp}

We compare the cubic cost and support of $\pi_{5/2}$ and $\pi_{3/2}$
with discrete optimizers computed without prescribing the support graphs.

For a positive integer $N$, discretize the marginals on the grids
\[
 x_i=-1+\left(i+\frac12\right)\frac2N,
 \qquad
 y_j=-2+\left(j+\frac12\right)\frac2N
\]
for $i=0,\ldots,N-1$ and $j=0,\ldots,2N-1$, with masses $1/N$ and
$1/(2N)$, respectively.  For a given $k$, minimize
\begin{equation*}
 \sum_{i,j}(y_j-x_i)^3\pi_{ij}
\end{equation*}
subject to
\[
 \sum_j\pi_{ij}=\frac1N,\qquad
 \sum_i\pi_{ij}=\frac1{2N},\qquad
 \sum_j(y_j-x_i)\pi_{ij}=0,\qquad
 \pi_{ij}=0\ \text{if }y_j<x_i-k.
\]
We also impose $\pi_{ij}\geq0$. These constraints fix both marginals
and the conditional means exactly. The script
\path{discrete_lp.py} solves the problem with SciPy's HiGHS
interface. \Cref{fig:lp-validation} compares the optimizer for $N=80$
with the graphs of $D,U$. From $N=20$ to $N=120$, the objective error
relative to
$V_{5/2}(z^3)\approx\CubicReferenceKTwoFive$ decreases from
$5.820\cdot10^{-3}$ to $1.680\cdot10^{-4}$, while the mean distance to the
two graphs,
\[
 \sum_{ij}\pi_{ij}\min\{|y_j-D(x_i)|,|y_j-U(x_i)|\},
\]
decreases from $3.607\cdot10^{-2}$ to $6.060\cdot10^{-3}$ on the four
grids $N=20,40,80,120$.
The same experiment at $k=3/2$, where $\pi_k$ is given by the scalar
construction of \cref{sec:small-k}, gives objective errors relative to
$V_{3/2}(z^3)\approx\CubicReferenceKOneFive$ decreasing from
$3.744\cdot10^{-3}$ to $1.049\cdot10^{-4}$, and mean distances to the
two graphs decreasing from $3.236\cdot10^{-2}$ to $5.499\cdot10^{-3}$,
on the same four grids. Across all eight linear programs, the maximum
row, column, and martingale residuals are below $3\cdot10^{-16}$.
These are floating-point residuals; the feasibility and optimality
proofs are independent of the discretization.

\begin{figure}[t]
\centering
\begin{tikzpicture}
\begin{axis}[
 width=0.72\textwidth,
 height=0.46\textwidth,
 tick label style={font=\scriptsize},
 label style={font=\small},
 title style={font=\small},
 grid=major,
 grid style={black!8},
 title={$N=80$ LP support and exact graphs},
 xmin=-1.05,xmax=1.05,
 ymin=-2.1,ymax=2.1,
 xlabel={$x$},ylabel={$y$},
 legend style={at={(0.5,-0.22)},anchor=north,legend columns=3,
               font=\scriptsize,draw=none,fill=none,column sep=6pt},
 legend cell align=left
]
\addplot[only marks,mark=+,mark size=0.9pt,
         mark options={draw=black!25,line width=0.25pt}]
 table[x=x,y=y] {lp_support_k25.dat};
\addlegendentry{LP support}
\addplot[constrained lower,line width=1.2pt,mark size=1.5pt,mark repeat=80]
 table[x=x,y=D] {support_k25.dat};
\addlegendentry{constrained $D(x)$}
\addplot[constrained upper,line width=1.2pt,mark size=1.4pt,mark repeat=80,
         mark phase=40]
 table[x=x,y=U] {support_k25.dat};
\addlegendentry{constrained $U(x)$}
\end{axis}
\end{tikzpicture}
\caption{The discrete optimizer for $k=5/2$, $h(z)=z^3$, and $N=80$
(gray crosses), together with $D$ (red, solid, circles) and $U$
(blue, dashed, squares). The lower map satisfies $D(x)=x-k$
for $x\geq b=1/2$.}
\label{fig:lp-validation}
\end{figure}

\section{Conclusion}\label{sec:discussion}

For every $k\geq1$, the constructed coupling minimizes
\eqref{eq:primal} for all $C^1$ costs with convex derivative.
The construction changes at $k=1,2,3$, as summarized in
\cref{tab:regimes}. For $1<k<3$, $D(x)=x-k$ on $[b,1]$,
where $b=k-2$. The maps on $[-1,b]$ are determined by the mass and
first-moment equations for the residual measures.

\begin{table}[ht]
\centering
\caption{Parameter regimes for the common optimizer. For $1<k<3$,
$b=k-2$; for $2\leq k<3$, $a=2k-5$.}
\label{tab:regimes}
\renewcommand{\arraystretch}{1.18}
\begin{tabular}{@{}p{0.15\textwidth}p{0.52\textwidth}p{0.25\textwidth}@{}}
\toprule
Range of $k$ & Maps and conditional laws & Result\\
\midrule
$0<k<1$ & $Y\geq-1-k>-2$ contradicts the target marginal
         & No feasible coupling\\
$k=1$ & Two shifts $Y=X\pm1$, with equal weights & Optimal\\
$1<k<2$ & Scalar construction on $[-1,x_0]$; \eqref{eq:middle-delta} on $[x_0,b]$; $D(x)=x-k$ on $[b,1]$
          & Optimal\\
$2\leq k<3$ & $D_{\lc},U_{\lc}$ on $[-1,a]$; \eqref{eq:x-middle}--\eqref{eq:U-middle} on $[a,b]$; $D(x)=x-k$ on $[b,1]$
             & Optimal\\
$k\geq3$ & Ordinary left-curtain maps
          & Unconstrained optimizer\\
\bottomrule
\end{tabular}
\end{table}

Feasibility is characterized by $\Gamma_k$-convex order.
Optimality is proved by the dual inequality and uniform approximation.
\Cref{ex:counter} shows that the two-source comparison in
\cite[Theorem~6.1]{BayraktarZhangZhou} is insufficient, even for a
$\Gamma_k$-left-monotone coupling.

\begin{remark}[Other marginals]\label{rem:other-marginals}
Three thresholds explain the regimes in the uniform example. The
ordinary left-curtain coupling first becomes feasible when $k$ reaches
its largest downward displacement, here $3$. The left endpoint of the
target support gives $b=-2+k$: sources $x>b$ cannot reach $-2$, and our
construction has $D(x)=x-k$ precisely on $[b,1]$. Finally, the boundary
$x-k$ stays at or below the common initial target $D(-1)=U(-1)=-1$ for all
$x\leq1$ exactly when $k\geq2$. Below this threshold, the predecessor
of a boundary point may lie on the upper graph.

For general compactly supported marginals in convex order, the
ordinary left-curtain coupling satisfies the constraint exactly when
$k\geq\operatorname*{ess\,sup}_{\pi_{\lc}}(x-y)$.
The other two thresholds depend on the geometry of its support and
need not describe the constrained optimizer. In particular, the fact
that a source cannot reach the left endpoint of the target support
does not by itself force the lower constraint to bind.

A conditional version of the construction is nevertheless available.
Suppose that $\mu,\nu$ have densities $\mu',\nu'$ and supports
$[x_-,x_+]$, $[y_-,y_+]$, respectively. Assume
$b:=y_-+k\in(x_-,x_+)$, and seek an increasing $C^1$ upper map with
$U(x)>x$, taking values in $[y_-,y_+]$, that satisfies
\[
 \nu'(U(x))U'(x)
 =\mu'(x)\frac{k}{U(x)-x+k},\qquad U(x_+)=y_+.
\]
Together with $D(x)=x-k$ and the martingale weights, such a map
defines a transport from $[b,x_+]$, provided $x-k\in[y_-,y_+]$
there. Subtract its full target marginal from $\nu$. If the remainder
is nonnegative, is in convex order above
$\mu|_{[x_-,b]}$, and its left-curtain coupling satisfies $y\geq x-k$,
adding the two transports gives an admissible coupling.

These are feasibility conditions, not an optimality theorem for
general marginals. Extending the dual argument requires further
control of the support maps, their inverse branches, and the upper
displacement. In the uniform example, the explicit formulas provide
this control through \cref{lem:hinge,lem:small-dual-estimates}.
Even nonnegativity of the residual target can fail: the lower branch
alone contributes density
$\mu'(y+k)p(y+k)$ on $[y_-,x_+-k]$, where
$p(x)=(U(x)-x)/(U(x)-x+k)$. If this exceeds $\nu'(y)$ on a set of
positive Lebesgue measure, the proposed construction is infeasible.
\end{remark}

\bibliographystyle{amsplain}
\bibliography{reference}

\end{document}